 \newtheorem{theorem}{Theorem}[section]
 \newtheorem{corollary}[theorem]{Corollary}
 \newtheorem{lemma}[theorem]{Lemma}
 \newtheorem{definition}[theorem]{Definition}
 \newtheorem{remark}[theorem]{Remark}
\newcommand{\Jord}{\mathop{\text{Jord}}}
\def\pc{\pi_{cusp}}
\def\Z{\mathbb Z}
\def\R{\mathbb R}
\def\d{\delta}
\def\r{\rtimes}
\def\t{\times}
\def\o{\otimes}
\def\e{\epsilon}
\def\h{\hookrightarrow}
\def\ra{\rightarrow}
\def\a{\alpha}
\def\b{\beta}
\def\vf{\varphi}
\def\s{\sigma}
\def\ep{\epsilon_\pi}
\def\BE{\begin{equation}}
\def\EE{\end{equation}}
\title{On Jacquet modules of representations of segment type}
\author{Ivan Mati\'{c} and Marko Tadi\'c}
\date{\today}
\begin{document}


\begin{abstract}
Let $G_{n}$ denote either the group $Sp(n, F)$ or $SO(2n+1, F)$ over a local non-archimedean field $F$. We study representations of segment type of group $G_{n}$, which play a fundamental role in the constructions of discrete series, and obtain a complete description of the Jacquet modules of these representations. Also, we provide an alternative way for determination of Jacquet modules of strongly positive discrete series and a description of top Jacquet modules of general discrete series.
\end{abstract}

\maketitle

{\renewcommand{\thefootnote}{}
\footnotetext[1]{I. Mati\'{c}: Department of Mathematics, University of Osijek, Osijek, Croatia, e-mail: imatic@mathos.hr}
\footnotetext[2]{M. Tadi\'{c}: Department of Mathematics, University of Zagreb, Zagreb, Croatia, e-mail: tadic@math.hr}
\footnotetext[3]{\textit{Mathematics Subject Classification:}
22E35 (primary), 22E50, 11F70 (secondary)}

\section{Introduction}

Let $F$ be a local non-archimedean field of characteristic different than two. Representations of reductive groups over $F$ that we  shall consider in this paper will be always smooth and admissible. We shall use  standard notation from the representation theory of general linear groups over $F$ introduced by Bernstein and Zelevinsky (see \cite{Z}).  Recall that Levi factors of maximal parabolic subgroups of general linear groups are direct products of two smaller general linear groups. This fact enables one to consider the representation parabolically induced by the tensor product $\pi_1\o\pi_2$ of two representations of general linear groups, which is denoted by
$$
\pi_1\times \pi_2.
$$
The parabolic induction that we consider in this paper will always be from the parabolic subgroups standard with respect to the subgroup of upper triangular matrices (the same will be the case for Jacquet modules). The Grothendieck group of the category of finite length representations of $GL(n,F)$ is denoted by $R_n$. The parabolic induction $\times$ defines in a natural way the structure of a commutative graded algebra with unit on $R=\oplus_{n\in\Z_{\geq 0}}R_n.$
The induced map from $R\o R$ will be denoted by $m$. (Sums of semi simplifications of) Jacquet modules with respect to maximal parabolic subgroups define mapping $m^* \colon R\ra R\o R$.  This gives $R$ the structure of a graded coalgebra. Moreover, $R$ is a Hopf algebra.

Denote
$$
\nu \colon GL(n,F)\ra \R^\t, \quad g \mapsto |\det(g)|_F
$$
where $|\ |_F$ denotes the normalized absolute value. A segment is a set of the form $\{\rho,\nu\rho, \nu^2\rho,$ $\dots,\nu^k\rho\}$,
where $\rho$ is an irreducible cuspidal representation of a general linear group. We denote this set shortly by $[\rho,\nu^k\rho]$.
To such a segment the unique irreducible subrepresentation of $\nu^k\rho\t\dots\t\rho$ is attached, which we denote by $\d([\rho,\nu^k\rho]).$
These are the essentially square integrable representations, and one gets all such representations in this way.

A very important  (and very simple) formula of Bernstein-Zelevinsky theory is
\begin{equation*} \label{prva}
m^*(\d([\rho,\nu^k\rho]))=\sum_{i=-1}^{k} \d([\nu^{i+1}\rho,\nu^k\rho]) \o \d([\rho,\nu^i\rho]),
\end{equation*}
which by the transitivity of Jacquet modules, describes all Jacquet modules of irreducible essentially square integrable representations of general linear groups.

One would also like to have such a formula to determine Jacquet modules of representations of classical groups. It is of particular interest to determine Jacquet modules of classes of representations of classical groups whose role in the admissible dual is as important as the role of essentially square integrable representations in the admissible dual of a general linear group. Besides being interesting in itself, such description would have applications in the theory of automorphic forms and in the classification of unitary duals.

In the present paper we are concerned with representations of segment type of symplectic and special odd-orthogonal groups over $p$-adic field $F$. This prominent class of representations, consisting of certain irreducible subquotients of representations induced by the tensor product of an essentially square integrable representation of a general linear group and a supercuspidal representation of a classical group, has been introduced by the second author in \cite{T-seg}. Such representations have also appeared as the basic ingredients in classifications of discrete series and tempered representations of classical groups (we refer the reader to \cite{Moe-T} and \cite{T-temp}). Representations of segment type can be viewed as irreducible subquotients of generalized principal series induced from representation having a supercuspidal classical-group part. We note that composition series of such representations have been obtained by Mui\'{c} in \cite{Mu-CSSP} (in fact, a more general class of generalized principal series, having a strongly positive representation on the classical-group part, has been studied there). In determination of the composition series of induced representation, the fundamental role is played by Jacquet modules of the initial representation. Thus, our results provide a starting point for investigation of representations induced by those of segment type.

We emphasize that in several cases our results provide complete description of Jacquet modules of certain non-tempered representations. Our description can be used to analyze asymptotics of matrix coefficients of such representations and, consequently, to determine some prominent members of the unitary dual.

In the case of generic reducibilities, representations of segment type are always tempered or discrete series representations. However, for general reducibilities, representations of segment type might also be non-tempered. The structural formula, which is a version of the Geometrical Lemma of Bernstein-Zelevinsky, together with certain properties of the representations of segment type obtained in \cite{T-seg}, enables us to use an inductive procedure which results in a complete description of Jacquet modules of $GL$-type and top Jacquet modules of such representations. These results, enhanced by the transitivity of Jacquet modules and some results regarding Jacquet modules of representations of general linear groups, allow us to determine Jacquet modules of representations of segment type with respect to all standard maximal parabolic subgroups. Since representations of segment type can appear
in three technically different composition series, we obtain a description of their Jacquet modules considering three technically different cases (but the general strategy in all the cases is the same).
However, we introduce a convention regarding irreducible constituents of considered composition series, which enables us to state our results uniformly.

An analogous problem to determine Jacquet modules has been studied for strongly positive representations by the first author (\cite{Ma-JMSP}), but it was mostly based on the fact that the Jacquet module of strongly positive discrete series has a representation of the same type on its classical-group part.

On the other hand, an approach similar to the one presented here has recently been used by the first author to provide a description of Jacquet modules with respect to maximal parabolic subgroups of certain families of discrete series which contain an irreducible essentially square integrable representation on the $GL$-part (\cite{Ma-JMDS1}). In that paper one starts with determination from Jacquet modules which are not of $GL$-type. Then, to deduce to which irreducible subquotient obtained Jacquet modules belong, one uses transitivity of Jacquet modules and representation theory of general linear groups.

Let us now describe the contents of the paper in more details. In the following section we introduce some notation which will be used throughout the paper, while in the third section we recall some important properties of representations of segment type and introduce a certain convention which will keep our results uniform. The next three sections are devoted to determination of Jacquet modules of the representations of segment type,
considering three technically different cases. Also, some results obtained in section four and five help us to shorten the proofs in the sixth section. In the last two sections we derive some interesting Jacquet modules of discrete series. Firstly we provide an alternative way to determine Jacquet modules of strongly positive discrete series and secondly we provide a description of top Jacquet modules of general discrete series.

For the convenience of the reader, we cite the main description of Jacquet modules here.

Representations of segment type are irreducible subqutients of  $\delta([\nu^{-c}\rho,\nu^{d}\rho])\r\s$, where $\rho$ is an irreducible cuspidal representations of
a general linear group and  $\sigma$ is an irreducible cuspidal representations of a classical group ($c,d \in \R$, $c+d\in  \mathbb{Z}_{\geq 0}$). Then directly
from previously mentioned formula for $m^*$ and \cite{T-Str} we get
\begin{equation*}
\mu^*\left(\delta([\nu^{-c}\rho,\nu^{d}\rho])\r\s\right)
 =
\sum_{i= -c-1}^{ d}
\sum_{j=i}^{ d}
\delta([\nu^{-i}\tilde\rho,\nu^{c}\tilde\rho])
 \times
\delta([\nu^{j+1} \rho,\nu^{d}\rho]) \otimes
\delta([\nu^{i+1} \rho,\nu^{j}\rho])\r\s
\end{equation*}
($\tilde \rho$ denotes the contragredient of $\rho$). The case which interest us is when $\delta([\nu^{-c}\rho,\nu^{d}\rho])\r\s$ reduces (square integrable subquotients can show up in this case  only).
Then we can take selfcontragredient $\rho$ and assume $d\in (1/2)\mathbb{Z}$ (only in this case we can have reducibility).
We shall consider the case $d-c\geq 0$ (changing signs of $c$ and $d$ simultaneously gives the same composition series).
We shall assume that there exists $\a\in(1/2)\Z_{\geq0}$ such that for $\b\geq 0$, $\nu^\b\rho\r\s$ reduces if and only if $\b=\a$. This always holds for
$\rho$ selfcontragredient (it is a very non-trivial fact which we shall not discuss here; we shall simply assume that it  holds for $\rho$ and $\s$).
Also, we assume $d-\a\in\Z$ (only then we can have reducibility).

The length of $\delta([\nu^{-c}\rho,\nu^{d}\rho])\r\s$ is at most three. This is a multiplicity one representation. It is reducible  if and only if $[-c,d]\cap\{-\a,\a\}\ne \emptyset$.
It has length three if and only if $\{-\a,\a\}\subseteq[-c,d]$ and $c\ne d$.

Below we shall define terms $\delta([\nu^{-c}\rho,\nu^{d}\rho]_+;\s)$, $\delta([\nu^{-c}\rho,\nu^{d}\rho]_-;\s)$ and $L_\a(\delta([\nu^{-c}\rho,\nu^{d}\rho]);\s)$. Each of them is either irreducible representation or zero. They satisfy
\begin{equation} \label{uuvodu}
\delta([\nu^{-c}\rho,\nu^{d}\rho])\rtimes\s=\delta([\nu^{-c}\rho,\nu^{d}\rho]_+;\s)+\delta([\nu^{-c}\rho,\nu^{d}\rho]_-;\s)+L_\a(\delta([\nu^{-c}\rho,\nu^{d}\rho]);\s)
\end{equation}
in the corresponding Grothendieck group.

Suppose first that $\delta([\nu^{-c}\rho,\nu^{d}\rho])\r\s$ is irreducible.  Then we take $\delta([\nu^{-c}\rho,\nu^{d}\rho]_-;\s)=0$. Furthermore, in this case we require
$\delta([\nu^{-c}\rho,\nu^{d}\rho]_+;\s)\ne0$ if and only if $[-c,d]\subseteq[-\a+1,\a-1]$. For irreducible $\delta([\nu^{-c}\rho,\nu^{d}\rho])\r\s$, this requirement and (\ref{uuvodu}) obviously determine $L_\a(\delta([\nu^{-c}\rho,\nu^{d}\rho]);\s)$.

Suppose now that $\delta([\nu^{-c}\rho,\nu^{d}\rho])\r\s$ reduces. If $c=d$, let $L_\a(\delta([\nu^{-c}\rho,\nu^{d}\rho]);\s)=0$. Otherwise, $L_\a(\delta([\nu^{-c}\rho,\nu^{d}\rho]);\s)$
will denote the Langlands quotient $L(\delta([\nu^{-c}\rho,\nu^{d}\rho]);\s)$ of $\delta([\nu^{-c}\rho,\nu^{d}\rho])\r\s$.
If $\a > 0$, then there is the unique irreducible subquotient of $\delta([\nu^{-c}\rho,\nu^{d}\rho])\r\s$ which has in its minimal standard Jacquet module
at least one irreducible subquotient whose all exponents are non-negative (for more details, we refer the reader to Sections 2 and 3). We denote such irreducible subquotient of $\delta([\nu^{-c}\rho,\nu^{d}\rho])\r\s$ by $\delta([\nu^{-c}\rho,\nu^{d}\rho]_+;\s)$.
If $\a = 0$, we write $\rho \rtimes \sigma$ as a sum of irreducible subrepresentations  $\tau_1 \oplus \tau_{-1}$ (we fix the choice of signs $\pm$, which is arbitrary and can be compatible with the one from \cite{Moe-Ex},
but this is not essential for our paper). Then there exists the unique irreducible subquotient of $\delta([\nu^{-c}\rho,\nu^{d}\rho]) \rtimes \s$ that contains an irreducible representation
of the form $\pi \otimes \tau_1$ in Jacquet module with respect to appropriate standard parabolic subgroup, and we denote it by $\delta([\nu^{-c}\rho,\nu^{d}\rho]_+;\s)$.
If $c = d$ or the length of $\delta([\nu^{-c}\rho,\nu^{d}\rho])\r\s$ is three, then this induced representation contains the unique irreducible subrepresentation different from
$\delta([\nu^{-c}\rho,\nu^{d}\rho]_+;\s)$ and we denote it by $\delta([\nu^{-c}\rho,\nu^{d}\rho]_-;\s)$. Otherwise, we take $\delta([\nu^{-c}\rho,\nu^{d}\rho]_-;\s)=0$.
We note that the representation $\delta([\nu^{-c}\rho,\nu^{d}\rho]_+;\s)$ is square integrable if and only if $c\ne d$, $\{-\a,\a\}\subseteq [-c,d]$ or $\a=-c$.
If $\delta([\nu^{-c}\rho,\nu^{d}\rho]_+;\s)$ is square integrable, then $\delta([\nu^{-c}\rho,\nu^{d}\rho]_-;\s)$ is also square integrable, if it is non-zero.
Furthermore, if $\delta([\nu^{-c}\rho,\nu^{d}\rho]_-;\s)$ is square integrable, then $\delta([\nu^{-c}\rho,\nu^{d}\rho]_+;\s)$ is square integrable.

We have the following equality:
 \begin{align*}
\mu^*\big(&\delta([\nu^{-c}\rho,\nu^{d}\rho]_\pm;\s)\big)  
\\
& = \sum_{i= -c -1}^{ d-1
}\sum_{j=i+1}^{d} \delta([\nu^{-i}\rho,\nu^{c}\rho])  \times \delta([\nu^{j+1}\rho,\nu^{d}\rho]) \otimes
\delta([\nu^{i+1} \rho,\nu^{j}\rho]_\pm;\s) +{} 
\\
& + \mkern-20mu\sum_{-c-1\le i\le c-1}\ \sum_{i+1\le j\le
c}\mkern-75mu\rule[-4.5ex]{0pt}{2ex}_{i+j < -1}\mkern25mu
\delta([\nu^{-i}\rho,\nu^{c}\rho])
 \times
\delta([\nu^{j+1} \rho,\nu^{d}\rho]) \otimes
L_\a(\delta([\nu^{i+1} \rho,\nu^{j}\rho]);\s) +
\\[-1.5ex]
&\mkern100mu+ \mkern-10mu\sum_{i=-c-1}^{\pm \a - 1}
\delta([\nu^{-i}\rho,\nu^{c}\rho]) \t \delta([\nu^{i+1}\rho,\nu^{d}\rho])\o\s.
\end{align*}
{\color {blue} In the  above formula we have corrected two typographical errors which exist in the published version of this paper. First,  the upper limit in the first sum of the second row  needs to be $d-1$
(instead of $c$, as it is in the published version).  Then, the limits of the first sum in the third row  are $-c-1\leq i\leq c-1$ (instead of $-c-1\leq i\leq c $\,; the index  $c$ does not give any contribution).
The same corrections are made to the corresponding formulas in Corollaries \ref{cor1}, \ref{korprvi} and \ref{korolar-drugi}.}

For $c < \a$ or $\a \leq c<d$, we have
\begin{align*}
\mu^*\big(&L(\delta([\nu^{-c}\rho,\nu^d\rho]);\sigma)\big) 
\\
& = \mkern-20mu\sum_{-c-1\le i\le d-1}\ \sum_{i+1\le j\le
d}\mkern-75mu\rule[-4.5ex]{0pt}{2ex}_{0\le i+j}\mkern25mu
L(\delta([\nu^{-i}\rho,\nu^c\rho]),\delta([\nu^{j+1}\rho,\nu^d\rho])\big) \otimes
L_\alpha(\delta([\nu^{i+1}\rho,\nu^j\rho]);\sigma)) +{}  
\\[-1.5ex]
&\mkern100mu+ \mkern-10mu\sum_{i=\a}^d
L(\delta([\nu^{-i}\rho,\nu^c\rho]),\delta([\nu^{i+1}\rho, \nu^d\rho])) \otimes
\sigma. 
\end{align*}
{\color {blue} Also, in the  above formula we have corrected a typographical error  existing in the published version:
 the limits in the first sum in the second row  are $-c-1\leq i\leq d-1$ (instead of $-c-1\leq i\leq d$\,; the index $d$ does not contribute 
in the formula). 
The same correction is made to the corresponding formulas in Corollaries \ref{cor1}, \ref{korprvi} and \ref{korolar-drugi}.}

The authors would like to thank the referee for reading the paper very carefully and helping us to improve the presentation style. Also, the author's thanks go to \v{S}ime Ungar for many useful suggestions and help with English language.

This work has been supported by Croatian Science Foundation under the project 9364.

\section{Notation}

We will first describe the groups that we consider.

Let $J_{n} = (\delta_{i, n+1-j})_{1 \leq i,j \leq n}$ denote the $n \times n$ matrix, where $\delta_{i, n+1-j}$ stands for the Kronecker symbol. For a square matrix $g$, we denote by $g^{t}$ its transposed matrix, and by $g^{\tau}$ its transposed matrix with respect to the second diagonal. In what follows, we shall fix one of the series of classical groups
\begin{equation*}
Sp(n, F) = \bigg \{ g \in GL(2n, F) : \left( \begin{array}{cc} 0 & -J_{n} \\ J_{n} & 0 \end{array} \right) g^{t}  \left( \begin{array}{cc} 0 & -J_{n} \\ J_{n} & 0 \end{array} \right) =  g^{-1}\bigg \},
\end{equation*}
or
\begin{equation*}
SO(2n+1, F) = \bigg \{ g \in GL(2n+1, F) : g^{\tau} = g^{-1}\bigg \}
\end{equation*}
and denote by $G_{n}$ the rank $n$ group belonging to the series which we fixed.

The set of standard parabolic subgroups will be fixed in a usual way, i.e., in $G_{n}$ we fix the minimal $F$-parabolic subgroup consisting of upper-triangular matrices in $G_{n}$. Then the Levi factors of standard parabolic subgroups have the form $M \cong GL(n_{1}, F) \times \cdots \times GL(n_{k}, F) \times G_{n'}$. For representations $\delta_{i}$ of $GL(n_{i}, F)$, $i = 1, 2, \ldots, k$, and a representation $\s$ of $G_{n'}$, the normalized parabolically induced representation Ind$_{M}^{G_{n}}(\delta_{1} \otimes \cdots \otimes \delta_{k} \otimes \s)$ will be denoted by $\delta_{1} \times \cdots \times \delta_{k} \rtimes \s$.

Let $R(G_{n})$ denote the Grothendieck group of the category of finite length representations of $G_{n}$ and define $R(G) = \oplus_{n \geq 0} R(G_{n})$. Similarly as in the case of a general linear group, sums of semisimplifications of Jacquet modules with respect to maximal parabolic subgroups define the mapping $\mu^{\ast} \colon R(G) \rightarrow R \otimes R(G)$.

Throughout the paper, the Jacquet module with respect to the smallest standard parabolic subgroup(s) admitting non-zero Jacquet modules for the representation in question will be called the minimal standard Jacquet module. For representation $\pi \in R(G_{n})$ with partial cuspidal support $\s \in R(G_{n'})$, the Jacquet module of $\pi$ with respect to the maximal parabolic subgroup having Levi factor equal to $GL(n-n', F) \times G_{n'}$ will be called the Jacquet module of $GL$-type and will be denoted by $s_{GL}(\pi)$. The sum of all irreducible constituents (counted with multiplicities) of $\mu^*(\pi)$ of the form $\tau\o\varphi$, where $\tau$ is cuspidal, will be denoted by $s_{top}(\pi)$.

We define $\kappa \colon R \otimes R \rightarrow R \otimes R$ by $\kappa (x \otimes y) = y \otimes x$ and extend contragredient $\widetilde{ }$ ~ to an automorphism of $R$ in the natural way. Let $M^{\ast} \colon R \rightarrow R$ be defined by \begin{equation*}
M^{\ast} = (m \otimes id) \circ (~ \widetilde{} \otimes m^{\ast}) \circ \kappa \circ m^{\ast}.
\end{equation*}

We recall the following formulas which hold for $\rho$ not necessary self-dual:
\begin{align*}
M^*(\delta([\nu^a\rho,\nu^b\rho])) 
&=
\sum_{i=a-1}^b \sum_{j=i}^b \delta([\nu^{-i}\tilde\rho,\nu^{-a}\tilde\rho]) \times
\delta([\nu^{j+1}\rho,\nu^b\rho]) \otimes \delta([\nu^{i+1}\rho,\nu^j\rho]) \\
\intertext{or}
M^*\left(\delta([\nu^{a}\rho,\nu^{b}\rho])\right) 
&= \sum_{k=0}^{b-a+1}
 \sum_{i=a-1}^{b-k}
\delta([\nu^{-i}\tilde\rho,\nu^{-a}\tilde\rho])
 \times
\delta([\nu^{k+i+1} \rho,\nu^{b}\rho]) \otimes
\delta([\nu^{i+1} \rho,\nu^{i+k}\rho]).
\end{align*}

The following lemma, which has been derived in \cite{T-Str}, presents a crucial structural formula for our calculations with Jacquet modules.

\begin{lemma} \label{osn}
Let $\rho$ be an irreducible cuspidal representation of $GL(m,F)$ and $a, b \in \mathbb{R}$ such that $b - a \in \mathbb{Z}_{\geq 0}$. For $\s \in R(G_{n})$ we write $\mu^{\ast}(\sigma) = \sum_{\tau, \sigma'} \tau \otimes \sigma'$. Then the following equalities hold:
\begin{align*}
\mu^\ast(\pi\rtimes\sigma) &= M^\ast(\pi)\rtimes\mu^\ast(\sigma)\\
\intertext{and}
\mu^\ast(\delta([\nu^a\rho,\nu^b\rho])\rtimes\sigma)
&=
\!\!\sum_{i=a-1}^b \sum_{j=i}^b \sum_{\tau,\sigma'}
\delta([\nu^{-i}\tilde\rho,\nu^{-a}\tilde\rho])\times\delta([\nu^{j+1}\rho,\nu^b\rho])\times\tau
\otimes{}\\
&\mkern140mu \otimes \delta([\nu^{i+1}\rho,\nu^j\rho])\rtimes\sigma'.
\end{align*}
We omit $\delta([\nu^{x} \rho, \nu^{y} \rho])$ if $x>y$.
\end{lemma}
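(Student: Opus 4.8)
The statement has two parts, and the plan is to handle them in turn. The first, $\mu^\ast(\pi\rtimes\sigma) = M^\ast(\pi)\rtimes\mu^\ast(\sigma)$, is the general structural formula of Tadi\'{c} \cite{T-Str}; the second is its specialization to $\pi = \delta([\nu^a\rho,\nu^b\rho])$, which is a formal substitution. For the first identity I would invoke the Geometrical Lemma of Bernstein--Zelevinsky (in Casselman's form for the Jacquet functor) inside $G_n$: letting $P = MN$ be the standard maximal parabolic with Levi factor $M \cong GL(k,F)\times G_{n-k}$ computing the $k$-th term of $\mu^\ast$, and $Q$ a standard parabolic from which $\pi\otimes\sigma$ is induced, one expresses the semisimplification of $r_P\circ\mathrm{Ind}_Q^{G_n}$ as a sum over the double cosets $W_M\backslash W(G_n)/W_Q$, each contributing an induced module built from a twist of a Jacquet module of $\pi\otimes\sigma$. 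The combinatorial core is to sort these cosets according to how the representing Weyl element distributes the $GL$-factors of $\pi$: some factors remain on the $GL(k,F)$-side of $M$, some are absorbed into the $G_{n-k}$-side, and the Weyl elements that mix the two blocks act on the affected $GL$-factors by the contragredient, i.e.\ by $\delta([\nu^x\rho,\nu^y\rho])\mapsto\delta([\nu^{-y}\tilde\rho,\nu^{-x}\tilde\rho])$; meanwhile $\sigma$ contributes its own Jacquet module $\mu^\ast(\sigma) = \sum_{\tau,\sigma'}\tau\otimes\sigma'$. Regrouping the resulting sum and using commutativity of $\times$ together with the definitions of $m^\ast$, $\kappa$, $\widetilde{\ }$ and $m$, one recognizes exactly $M^\ast(\pi)\rtimes\mu^\ast(\sigma)$ with $M^\ast = (m\otimes\mathrm{id})\circ(\widetilde{\ }\otimes m^\ast)\circ\kappa\circ m^\ast$; since this is precisely the content of \cite{T-Str}, in the present paper it suffices to quote it.

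For the second identity I would substitute $\pi = \delta([\nu^a\rho,\nu^b\rho])$ into the first and insert the explicit expression for $M^\ast(\delta([\nu^a\rho,\nu^b\rho]))$ recalled just above the lemma,
\begin{equation*}
M^\ast(\delta([\nu^a\rho,\nu^b\rho])) = \sum_{i=a-1}^b\sum_{j=i}^b \delta([\nu^{-i}\tilde\rho,\nu^{-a}\tilde\rho]) \times \delta([\nu^{j+1}\rho,\nu^b\rho]) \otimes \delta([\nu^{i+1}\rho,\nu^j\rho]).
\end{equation*}
Writing $\mu^\ast(\sigma) = \sum_{\tau,\sigma'}\tau\otimes\sigma'$ and recalling that the action $\rtimes$ on $R\otimes R(G)$ is determined bilinearly by $(X\otimes Y)\rtimes(\tau\otimes\sigma') = (X\times\tau)\otimes(Y\rtimes\sigma')$, the product $M^\ast(\delta([\nu^a\rho,\nu^b\rho]))\rtimes\mu^\ast(\sigma)$ becomes, using associativity and commutativity of $\times$,
\begin{equation*}
\sum_{i=a-1}^b\sum_{j=i}^b\sum_{\tau,\sigma'} \delta([\nu^{-i}\tilde\rho,\nu^{-a}\tilde\rho]) \times \delta([\nu^{j+1}\rho,\nu^b\rho]) \times \tau \otimes \delta([\nu^{i+1}\rho,\nu^j\rho])\rtimes\sigma',
\end{equation*}
which is exactly the asserted equality. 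The convention of omitting $\delta([\nu^x\rho,\nu^y\rho])$ when $x>y$ is inherited unchanged from the conventions already in force for $m^\ast$ and $M^\ast$, so no separate argument is needed for degenerate index values.

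The one genuinely substantive step is the first: correctly running (or precisely quoting) the double-coset decomposition of the Geometrical Lemma for $G_n$ and checking that the contragredients and the order of the $GL$-factors emerge exactly as encoded by $M^\ast$. Once that identity is in hand, the passage to the segment case is purely mechanical and presents no obstacle.
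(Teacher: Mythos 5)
Your proposal is correct and matches the paper's treatment: the paper gives no independent proof of this lemma but simply quotes \cite{T-Str} for the structural formula $\mu^\ast(\pi\rtimes\sigma)=M^\ast(\pi)\rtimes\mu^\ast(\sigma)$, and the second identity is exactly the formal substitution of the displayed expression for $M^\ast(\delta([\nu^a\rho,\nu^b\rho]))$ that you carry out. Your sketch of the underlying Geometrical Lemma argument is a faithful outline of what \cite{T-Str} actually does, so nothing is missing.
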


We briefly recall the subrepresentation version of Langlands classification for general linear groups, which is necessary for determination of Jacquet modules of $GL$-type.

For every irreducible essentially square integrable representation $\delta$ of $GL(n, F)$, there exists an $e(\delta) \in \mathbb{R}$ such that $\nu^{-e(\delta)} \delta$ is unitarizable. Suppose that $\delta_{1}, \delta_{2}, \ldots, \delta_{k}$ are irreducible, essentially square integrable representations of $GL(n_{1}, F), GL(n_{2}, F), \ldots$, $GL(n_{k}, F)$ with $e(\delta_{1}) \leq e(\delta_{2}) \leq \ldots \leq e(\delta_{k})$. Then the induced representation $\delta_{1} \times \delta_{2} \times \cdots \times \delta_{k}$ has a unique irreducible subrepresentation, which we denote by $L(\delta_{1}, \delta_{2}, \ldots, \delta_{k})$. This irreducible subrepresentation is called the Langlands subrepresentation, and it appears with the multiplicity one in $\delta_{1} \times \delta_{2} \times \ldots \times \delta_{k}$. Every irreducible representation $\pi$ of $GL(n,F)$ is isomorphic to some $L(\delta_{1}, \delta_{2}, \ldots, \delta_{k})$. For a given $\pi$, the representations $\delta_{1}, \delta_{2}, \ldots, \delta_{k}$ are unique up to a permutation.

Also, throughout the paper we use the Langlands classification for classical groups and write a non-tempered irreducible representation $\pi$ of $G_{n}$ as the unique irreducible (Langlands) quotient of the induced representation of the form $\delta_{1} \times \delta_{2} \times \cdots \times \delta_{k} \rtimes \tau$, where $\tau$ is a tempered representation of $G_{t}$, and $\delta_{1}, \delta_{2}, \ldots, \delta_{k}$ are irreducible, essentially square integrable representations of $GL(n_{1}, F), GL(n_{2}, F), \ldots$, $GL(n_{k}, F)$ with $e(\delta_{1}) \geq e(\delta_{2}) \geq \ldots \geq e(\delta_{k}) > 0$. In this case, we write $\pi = L(\delta_{1}, \delta_{2}, \ldots, \delta_{k}; \tau)$. Again, for a given $\pi$, the representations $\delta_{1}, \delta_{2}, \ldots, \delta_{k}$ are unique up to a permutation.

Since the class of representations which we will study contains certain discrete series representations, we shortly recall basic ingredients of the classification of discrete series for classical groups due to M{\oe}glin and second author (\cite{Moe-Ex, Moe-T}).

According to this classification, discrete series are in bijective correspondence with admissible Jordan triples. More precisely, discrete series $\s$ of $G_{n}$ corresponds to the triple of the form $(\Jord, \s', \e)$, where $\s'$ is the partial cuspidal support of $\s$, $\Jord$ is the finite set (possibly empty) of pairs $(c, \rho)$, where $\rho$ is an irreducible cuspidal self-dual representation of $GL(n_{\rho}, F)$, and $c > 0$ an integer of appropriate parity, while $\epsilon$ is a function defined on a subset of $\Jord \cup (\Jord \times \Jord)$ and attains the values $1$ and $-1$.

For an irreducible cuspidal self-dual representation $\rho$ of $GL(n_{\rho}, F)$ we write $\Jord_{\rho} = \{ c : (c, \rho) \in \Jord \}$. If $\Jord_{\rho} \neq \emptyset$ and $c \in \Jord_{\rho}$, we put $c\_ = \max \{ d \in \Jord_{\rho}: d < c \}$, if it exists. Now, by definition $\e((c\_, \rho),(c,\rho)) = 1$ if there is some irreducible representation $\vf$ such that $\s$ is a subrepresentation of $\delta([\nu^{-\frac{c\_-1}{2}} \rho, \nu^{\frac{c-1}{2}} \rho]) \r \vf$.

In the classification mentioned above, every discrete series of $G_{n}$ is obtained inductively, starting from a Jordan triple of alternated  type which corresponds to a strongly positive representation, i.e., to the one whose all exponents in the supports of $GL$-type Jacquet module are positive. In each step one adds a pair of consecutive elements to Jordan block and the expanded $\e$-function equals one on that pair. For more details about this classification we refer the reader to \cite{T-inv} and \cite{T-temp}.

\section{Representations of segment type}

\begin{definition} Let $\rho$ and $\sigma$  be  irreducible cuspidal representations of a general linear group and of a classical group, respectively. Let $a,b\in\mathbb R$, $b-a\in\Z_{\geq 0}$, be such that
\begin{equation*}
0\leq a+b
\end{equation*}
and
\begin{equation*}
\d([\nu^{a } \rho,\nu^{b }\rho])\r\s
\end{equation*}
reduces. Then any irreducible subquotient of the above representation which contains $\d([\nu^{a } \rho,\nu^{b }\rho])\o\s$ in its Jacquet module (with respect to the standard parabolic subgroup), will be called a representation of segment type.
\end{definition}

We emphasize that representations $\d([\nu^{a } \rho,\nu^{b }\rho])\r\s$ and $\d([\nu^{-b} \rho,\nu^{-a}\rho])\r\s$ share the same composition series, but the choice $0 \leq a+b$ enables us to obtain, using the known formulas for Jacquet modules of $GL$-type, that the representation of segment type is always a subrepresentation of $\d([\nu^{a } \rho,\nu^{b }\rho])\r\s$.

Reducibility of the induced representation $\d([\nu^{a } \rho,\nu^{b }\rho])\r\s$ implies that $\rho$ is self-dual. Also, let $\a\in \mathbb R_{\geq0}$ be such that the induced representation $\nu^\a\rho\r\s$ reduces. Given $\rho$ and $\s$ such $\a $ is unique, by the results of Silberger \cite{Si}. Furthermore, recent results of Arthur imply that $\a\in(1/2)\Z$ (for more details we refer the reader to \cite{Art}).

Note that if $\d([\nu^{a } \rho,\nu^{b }\rho])\r\s$ is irreducible, then its Jacquet module contains $\d([\nu^{a } \rho,\nu^{b }\rho])\o\s$ and in this case we have a complete description of Jacquet modules with respect to the maximal parabolic subgroups of this representation.

Furthermore, it has been proved in \cite{T-irr} that $\d([\nu^{a } \rho,\nu^{b }\rho])\r\s$ reduces if and only if
\begin{equation*}
[\nu^{a } \rho,\nu^{b }\rho]\cap\{\nu^{-\a}\rho,\nu^{\a}\rho\}\ne \emptyset.
\end{equation*}
Thus, in the sequel we shall assume that $a,b\in (1/2)\Z$.

Also, we introduce the notion of \textit{proper} Langlands quotient of the induced representation $d$:
\begin{equation*}
L_{proper}(d)
=
\begin{cases}
L(d), & \text{if the corresponding standard module reduces};
\\
0, & \text {if the corresponding standard module is irreducible}.
\end{cases}
\end{equation*}

\begin{definition} \label{def1}
In the case of reducibility, we define $\delta([\nu^{a}\rho,\nu^{b}\rho]_+;\s)$ to be any irreducible subquotient of $\delta([\nu^{a}\rho,\nu^{b}\rho])\r\s$ which has in its minimal standard Jacquet module at least one irreducible subquotient whose all exponents are non-negative.
\end{definition}

In the sequel, we take $\d( [\nu^{a } \rho,\nu^{b }\rho]_-;\s)=0$
if $\d( [\nu^{a } \rho,\nu^{b }\rho])\r\s$ is a length two representation and if $ -a\ne b$.

In general case, the uniqueness of irreducible subquotients in Definition \ref{def1}, is provided by the following lemma.

\begin{lemma}
Suppose that $\a > 0$. There exists a unique irreducible subquotient of the induced representation $\d( [\nu^{a } \rho,\nu^{b }\rho])\r\s$ whose minimal standard Jacquet module contains at least one irreducible subquotient with all exponents being non-negative.
\end{lemma}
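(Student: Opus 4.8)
The plan is to prove existence and uniqueness by analyzing the structural formula from Lemma \ref{osn} applied to $\delta([\nu^{a}\rho,\nu^{b}\rho])\rtimes\s$, and by carefully tracking which irreducible subquotients of the full induced representation can contribute a term with all exponents non-negative in the minimal standard Jacquet module. Since the length of $\delta([\nu^{a}\rho,\nu^{b}\rho])\rtimes\s$ is at most three (a fact recalled in the introduction), there are very few candidate subquotients to examine, so the argument is essentially a bookkeeping argument once the right invariant is isolated.

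\textbf{Key steps.} First I would compute the minimal standard Jacquet module of $\delta([\nu^{a}\rho,\nu^{b}\rho])\rtimes\s$ by iterating $\mu^{\ast}$, using the structural formula and the Bernstein--Zelevinsky formula for $m^{\ast}(\delta([\rho,\nu^{k}\rho]))$; the support of this Jacquet module consists of tensor products of characters $\nu^{x}\rho$ (with $x$ ranging over a subset of $\{a,a+1,\dots,b\}$ together with their negatives, possibly reordered) tensored with $\s$. Second, I would identify the unique term in that full minimal Jacquet module all of whose exponents are non-negative: because $0\le a+b$ and $\a>0$, the ``positive'' exponents come from choosing, for each $x\in[a,b]$, the representative with larger absolute value, and one checks this yields exactly one constituent of the form $\varphi_{+}\otimes\s$ with all exponents in $\varphi_{+}$ non-negative. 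Third, I would use exactness of the Jacquet functor together with the fact that $\mu^{\ast}$ is an algebra--coalgebra compatible map: the constituent $\varphi_{+}\otimes\s$ appears in $\mu^{\ast}$ of exactly one irreducible subquotient $\pi_{0}$ of $\delta([\nu^{a}\rho,\nu^{b}\rho])\rtimes\s$, and that $\pi_{0}$ then satisfies the required property. For uniqueness, I would argue that any \emph{other} irreducible subquotient $\pi'$ has, in its minimal standard Jacquet module, only constituents possessing at least one strictly negative exponent; this follows by showing that the set of exponent-multisets appearing in $\mu^{\ast}(\pi')$ is disjoint from the all-non-negative multiset, which in turn follows from the multiplicity-one property of $\delta([\nu^{a}\rho,\nu^{b}\rho])\rtimes\s$ (its minimal Jacquet module is multiplicity free on the relevant cuspidal line) so that each exponent-multiset is attached to a single subquotient.

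\textbf{Main obstacle.} The delicate point is the reordering: an irreducible subquotient of $\delta([\nu^{i+1}\rho,\nu^{j}\rho])\rtimes\s'$ appearing inside $\mu^{\ast}$ need not have $\delta([\nu^{i+1}\rho,\nu^{j}\rho])\otimes\s'$ literally as its Jacquet module, and when one descends all the way to the minimal parabolic the characters can be permuted, so ``all exponents non-negative'' must be read as a statement about the multiset of exponents, not about any ordered segment. Handling this requires invoking the known description of minimal Jacquet modules of the building blocks $\nu^{\a}\rho\rtimes\s$ (for the $\a>0$ case, $\nu^{\a}\rho\rtimes\s$ has length two with the two constituents distinguished precisely by the sign of the exponent $\pm\a$ in the minimal Jacquet module) and then propagating this through the inductive structure. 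I expect the cleanest route is to first treat the case $-a=b$ (the segment is symmetric, length one or three, and $\a\in[-b,b]$) where the positive constituent is the square-integrable or tempered one, and then reduce the general case to it by an argument using $s_{GL}$ and the subrepresentation version of the Langlands classification recalled above, noting that $0\le a+b$ forces the representation of segment type to embed in $\delta([\nu^{a}\rho,\nu^{b}\rho])\rtimes\s$ rather than merely be a subquotient.
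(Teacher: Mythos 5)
The central claim driving your uniqueness argument is false, and this causes a genuine gap. You assert that ``one checks this yields exactly one constituent of the form $\varphi_+\otimes\s$ with all exponents in $\varphi_+$ non-negative'' and that the multiplicity-one property of the minimal Jacquet module ensures each exponent-multiset is attached to a single subquotient. Neither of these holds in general. When $a\le 0$ and $a\in\Z$, the $GL$-type Jacquet module $s_{GL}(\d([\nu^a\rho,\nu^b\rho])\r\s)$ contains more than one term whose exponents are all non-negative, and with multiplicity $>1$. Concretely, when $-a=b$ the constituent $\d([\nu\rho,\nu^b\rho])\times\d([\rho,\nu^b\rho])\otimes\s$ appears with multiplicity two, and when $-a<b$ both $\d([\nu\rho,\nu^{-a}\rho])\times\d([\rho,\nu^b\rho])\otimes\s$ (multiplicity two) and the irreducible $L(\d([\rho,\nu^b\rho]),\d([\nu\rho,\nu^b\rho]))\otimes\s$ (multiplicity one) have all non-negative exponents in the minimal Jacquet module. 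So there is no unique ``all-non-negative'' constituent, the Jacquet module is not multiplicity free on the relevant line, and the exponent multiset alone does not pin down a single irreducible $GL$-constituent.

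The real content of the lemma is precisely to show that, despite this, all of these all-non-negative constituents lie in the Jacquet module of the \emph{same} irreducible subquotient of $\d([\nu^a\rho,\nu^b\rho])\r\s$. This is what the paper spends most of the proof establishing, using the composition series results from \cite{Mu-CSSP}, Lemma 4.1 of \cite{T-temp}, and Lemma 4.1 of \cite{Ma-JMDS1}, together with a careful transitivity argument to show that both copies of the multiplicity-two term, and the additional Langlands-type term, all sit in the tempered (or discrete-series/$L_{proper}$) subquotient, while the Langlands quotient of the full induced representation and the remaining discrete-series subquotient cannot carry them. Your proposal correctly handles the easy cases ($a>0$, or $a<0$ with $a-\frac12\in\Z$, where $s_{GL}$ does have a unique non-negative term), but the hard cases need an argument of a completely different nature than the multiplicity-freeness claim you invoke, so the proposal as written does not prove the lemma.
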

\begin{proof}
The claim obviously holds if the induced representation $\d( [\nu^{a } \rho,\nu^{b }\rho])\r\s$ is irreducible. Thus, we may assume $\{ -\a, \a \} \cap  [a, b] \neq \emptyset$. One can see directly that
\begin{equation*}
s_{GL}(\d( [\nu^{a} \rho,\nu^{b}\rho])\r\s) = \sum_{i=a}^{b+1} \d( [\nu^{-i+1} \rho,\nu^{-a}\rho]) \times \d( [\nu^{i} \rho,\nu^{b}\rho]) \otimes \s.
\end{equation*}
It follows immediately that if $a > 0$ then there is a unique irreducible subquotient of $s_{GL}(\d( [\nu^{a} \rho,\nu^{b}\rho])\r\s)$ with all exponents being non-negative, and we obtain such a subquotient for $i = a$. Similarly, if $a < 0$ and $a - \frac{1}{2} \in \mathbb{Z}$, we deduce that the unique irreducible subquotient of $s_{GL}(\d( [\nu^{a} \rho,\nu^{b}\rho])\r\s)$ having all exponents non-negative is obtained for $i = \frac{1}{2}$ (note that in this case the representation $\d( [\nu^{\frac{1}{2}} \rho,\nu^{-a}\rho]) \times \d( [\nu^{\frac{1}{2}} \rho,\nu^{b}\rho])$ is irreducible).

Thus, it remains to prove the lemma for $a \leq 0$, $a \in \mathbb{Z}$. Obviously, $\a \leq b$.

If $-a = b$, the only irreducible subquotients of $s_{GL}(\d( [\nu^{a} \rho,\nu^{b}\rho])\r\s)$ having all exponents non-negative is $\d( [\nu \rho,\nu^{b}\rho]) \times \d([\rho,\nu^{b}\rho]) \otimes \s$ (which appears with multiplicity two).

On the other hand, $\d( [\nu^{-b} \rho,\nu^{b}\rho])\r\s$ is a representation of the length two whose composition series consists of two non-isomorphic tempered representations. Using Lemma 4.1 of \cite{T-temp}, we deduce that there is a unique irreducible subquotient of $\d( [\nu^{-b} \rho,\nu^{b}\rho])\r\s$ having $\d( [\nu^{\a} \rho,\nu^{b}\rho]) \times \d([\nu^{\a} \rho,\nu^{b}\rho]) \otimes \d([\nu^{-\a+1} \rho,\nu^{\a-1}\rho]) \rtimes \s$ in its Jacquet module. Let us denote such subquotient by $\tau_{temp}$. Since $\d([\nu^{-\a+1} \rho,\nu^{\a-1}\rho]) \rtimes \s$ is irreducible, $\d( [\nu^{\a} \rho,\nu^{b}\rho]) \times \d([\nu^{\a} \rho,\nu^{b}\rho]) \otimes \d([\nu \rho,\nu^{\a-1}\rho]) \times \d([\rho,\nu^{\a-1}\rho]) \otimes \s$ appears in the Jacquet module of $\tau_{temp}$ with multiplicity two. Transitivity of Jacquet modules implies that there is some irreducible representation $\vf$ such that $\vf \otimes \s$ appears in the Jacquet module of $\tau_{temp}$ and $m^{\ast}(\vf) \geq \d( [\nu^{\a} \rho,\nu^{b}\rho]) \times \d([\nu^{\a} \rho,\nu^{b}\rho]) \otimes \d([\nu \rho,\nu^{\a-1}\rho]) \times \d([\rho,\nu^{\a-1}\rho])$. From cuspidal support of $\vf$ and structural formula for $\mu^{\ast}$ it follows easily that $\vf = \d( [\nu \rho,\nu^{b}\rho]) \times \d([\rho,\nu^{b}\rho])$. Since $\d( [\nu^{\a} \rho,\nu^{b}\rho]) \times \d([\nu^{\a} \rho,\nu^{b}\rho]) \otimes \d([\nu \rho,\nu^{\a-1}\rho]) \times \d([\rho,\nu^{\a-1}\rho]) $ appears in $m^{\ast}(\d( [\nu \rho,\nu^{b}\rho]) \times \d([\rho,\nu^{b}\rho]))$ with multiplicity one, it follows that the Jacquet module of $\tau_{temp}$ contains both copies of $\d( [\nu \rho,\nu^{b}\rho]) \times \d([\rho,\nu^{b}\rho]) \otimes \s$.

We now turn to the case $-a < b$.

In this case, irreducible subquotients of $s_{GL}(\d( [\nu^{a} \rho,\nu^{b}\rho])\r\s)$ having all exponents non-negative are $\d( [\nu \rho,\nu^{-a}\rho]) \times \d([\rho,\nu^{b}\rho]) \otimes \s$ (which appears with multiplicity two) and $L(\d( [\rho,\nu^{b}\rho]), \d( [\nu \rho,\nu^{b}\rho])) \otimes \s$ (which appears with multiplicity one).

Several possibilities, depending on $a$, will be considered separately.

Let us first assume $a \leq -\a$. By Theorem 2.1 of \cite{Mu-CSSP}, $\d( [\nu^{a} \rho,\nu^{b}\rho])\r\s$ is a length three representation and we denote by $\pi$ its discrete series subrepresentation whose corresponding $\e$-function $\e_{\pi}$ satisfies $\e_{\pi}((-2a+1,\rho),(2b+1,\rho)) = \e_{\pi}(((-2a+1)\_,\rho),(2a+1,\rho)) = 1$. If we denote by $\pi'$ a discrete series subrepresentation of $\d( [\nu^{a} \rho,\nu^{b}\rho])\r\s$ different than $\pi$, it easily follows that its $\e$-function $\e_{\pi'}$ satisfies $\e_{\pi'}((-2a+1,\rho),(2b+1,\rho)) = 1$ and $\e_{\pi}(((-2a+1)\_,\rho),(2a+1,\rho)) = -1$. Using Lemma 4.1 of \cite{Ma-JMDS1} and transitivity of Jacquet modules, we obtain that $\d( [\nu \rho,\nu^{-a}\rho]) \times \d([\rho,\nu^{b}\rho]) \otimes \s$ is not contained in the Jacquet module of the Langlands quotient of $\d( [\nu^{a} \rho,\nu^{b}\rho])\r\s$. Furthermore, using Proposition 7.2 of \cite{T-temp} and transitivity of Jacquet modules, we obtain that $\d( [\nu \rho,\nu^{-a}\rho]) \times \d([\rho,\nu^{b}\rho]) \otimes \s$ must be in the Jacquet module of $\pi$ and it is not contained in the Jacquet module of $\pi'$.

Condition $\e_{\pi}(((-2a+1)\_,\rho),(2a+1,\rho)) = 1$ implies that there is some irreducible representation $\varphi$ such that $\pi$ is a subrepresentation of $\d( [\rho,\nu^{-a}\rho]) \rtimes \varphi$. Using Frobenius reciprocity and formula for $\mu^{\ast}$, we deduce that $\vf$ is an irreducible subquotient of $\d( [\nu \rho,\nu^{b}\rho])\r\s$. Proposition 3.1\,(i) of \cite{Mu-CSSP} implies
\begin{equation*}
\d( [\nu \rho,\nu^{b}\rho])\r\s = L_{proper}(\d( [\nu \rho,\nu^{b}\rho]); \s) + L_{proper}(\d( [\nu \rho,\nu^{\a-1}\rho]); \s'),
\end{equation*}
where $\s'$ is the unique strongly positive discrete series subrepresentation of $\d( [\nu^{\a} \rho,\nu^{b}\rho]) \r \s$. Since $\pi$ is a square integrable representation, $\vf \neq L_{proper}(\d( [\nu \rho,\nu^{b}\rho]); \s)$. Consequently, $\vf$ equals $L_{proper}(\d( [\nu \rho,\nu^{\a-1}\rho]); \s')$ and it follows immediately that it is a subrepresentation of $\d( [\nu \rho,\nu^{b}\rho])\r\s$. Thus, Jacquet module of $\pi$ contains $\d( [\rho,\nu^{-a}\rho]) \otimes \d( [\nu \rho,\nu^{b}\rho])\otimes \s$. Since Jacquet module of $\d( [\nu \rho,\nu^{-a}\rho]) \times \d([\rho,\nu^{b}\rho])$ does not contain $\d( [\rho,\nu^{-a}\rho]) \otimes \d( [\nu \rho,\nu^{b}\rho])$, transitivity of Jacquet modules implies that $L(\d( [\rho,\nu^{-a}\rho]), \d( [\nu \rho,\nu^{b}\rho])) \otimes \s \leq \mu^{\ast}(\pi)$.

Let us now assume $-\a+1 < a$. In this case, by Theorem 4.1\,(ii) of \cite{Mu-CSSP}, $\d( [\nu^{a} \rho,\nu^{b}\rho])\r\s$ is a length two representation and we have
\begin{equation*}
\d( [\nu^{a} \rho,\nu^{b}\rho])\r\s = L_{proper}(\d( [\nu^{a} \rho,\nu^{b}\rho]); \s) + L_{proper}(\d( [\nu^{a} \rho,\nu^{\a-1}\rho]); \s'),
\end{equation*}
for the unique strongly positive discrete series subrepresentation $\s'$ of $\d( [\nu^{\a} \rho,\nu^{b}\rho])\r\s$. It can be directly verified that Jacquet module of $\d( [\nu^{a} \rho,\nu^{\a-1}\rho])\r\s'$ contains $\d( [\nu \rho,\nu^{-a}\rho]) \times \d([\rho,\nu^{b}\rho]) \otimes \s$ with multiplicity two and $L(\d( [\rho,\nu^{-a}\rho])$, $\d( [\nu \rho,\nu^{b}\rho])) \otimes \s$ with multiplicity one. Furthermore, applying Theorem 4.1\,(ii) of \cite{Mu-CSSP} to this induced representation, we deduce
\begin{equation*}
\d( [\nu^{a} \rho,\nu^{\a-1}\rho])\r\s' = L_{proper}(\d( [\nu^{a} \rho,\nu^{\a-1}\rho]); \s') + \tau,
\end{equation*}
where $\tau$ is the unique common irreducible subquotient of representations $\d( [\nu^{a} \rho,\nu^{\a-1}\rho])\r\s'$ and $\d( [\nu^{a} \rho,\nu^{\a-2}\rho])\r\s''$, for the unique strongly positive discrete series subrepresentation $\s''$ of $\nu^{\a-1} \times \d( [\nu^{\a} \rho,\nu^{b}\rho])\r\s$.

Note that Jacquet modules of both  $\d( [\nu \rho,\nu^{-a}\rho]) \times \d([\rho,\nu^{b}\rho])$ and $L(\d( [\rho,\nu^{-a}\rho])$, $\d( [\nu \rho,\nu^{b}\rho]))$ contain irreducible subquotients of the form $\nu^{-a} \rho \otimes \d( [\nu^{-a+1} \rho,\nu^{b}\rho]) \otimes \vf$.

We will show that Jacquet module of $\tau$ does not contain irreducible subquotients of the form $\d( [\nu \rho,\nu^{-a}\rho]) \times \d([\rho,\nu^{b}\rho]) \otimes \s$ or $L(\d( [\rho,\nu^{-a}\rho]), \d( [\nu \rho,\nu^{b}\rho])) \otimes \s$. Suppose, to the contrary, that one of these representations appears in $\mu^{\ast}(\tau)$. Then the Jacquet module of $\tau$ contains some irreducible subquotient of the form $\nu^{-a} \rho \otimes \tau'$, where $\mu^{\ast}(\tau')$ contains an irreducible constituent of the form $\d( [\nu^{-a+1} \rho,\nu^{b}\rho]) \otimes \vf'$. Since $\tau$ is an irreducible subquotient of $\d( [\nu^{a} \rho,\nu^{\a-2}\rho])\r\s''$, calculating $\mu^{\ast}(\d( [\nu^{a} \rho,\nu^{\a-2}\rho])\r\s'')$ we obtain that $\tau'$ is an irreducible subquotient of $\d( [\nu^{a+1} \rho,\nu^{\a-2}\rho])\r\s''$. Since $\mu^{\ast}(\tau') \geq \d( [\nu^{-a+1} \rho,\nu^{b}\rho]) \otimes \vf'$, using $a > -\a + 1$, we obtain that $\mu^{\ast}(\s'')$ contains an irreducible subquotient of the form $\d( [\nu^{\a - 1} \rho,\nu^{b}\rho]) \otimes \vf''$. Since $-a + 1 < \a$, this contradicts \cite{Ma-JMSP}, Theorem 5.3  (or Section 7 of this paper). Consequently, all irreducible constituents of $s_{GL}(\d( [\nu^{a} \rho,\nu^{b}\rho])\r\s)$ having all exponents non-negative are contained in Jacquet modules of $L_{proper}(\d( [\nu^{a} \rho,\nu^{\a-1}\rho]); \s')$.

It remains to consider the case $a = -\a + 1$. In this case, again by Theorem 4.1\,(ii) of \cite{Mu-CSSP}, we have
\begin{equation*}
\d( [\nu^{a} \rho,\nu^{b}\rho])\r\s = L_{proper}(\d( [\nu^{a} \rho,\nu^{b}\rho]); \s) + \tau_{temp},
\end{equation*}
where $\tau_{temp}$ is the unique common irreducible (tempered) subquotient of induced representations $\d( [\nu^{a} \rho,\nu^{b}\rho])\r\s$ and $\d( [\nu^{a} \rho,\nu^{-a}\rho])\r\s'$, where $\s'$ denotes the strongly positive discrete series subrepresentation of $\d( [\nu^{\a} \rho,\nu^{b}\rho])\r\s$.

Let us first assume $a \neq 0$. Obviously, there is an irreducible representation $\pi$ (resp., $\pi'$) such that $\d( [\rho,\nu^{-a}\rho]) \otimes \pi$ (resp., $\d( [\nu \rho,\nu^{-a}\rho]) \otimes \pi'$) is in the Jacquet module of $\tau_{temp}$. It is not hard to deduce that $\pi \leq \d( [\nu \rho,\nu^{b}\rho])\r\s$ (resp., $ \pi' \leq \d( [\rho,\nu^{b}\rho])\r\s$). From
\begin{align*}
\delta([\nu\rho,\nu^b\rho])\rtimes\sigma &=
L_{proper}(\delta([\nu\rho,\nu^b\rho]);\sigma) +
L_{proper}(\delta([\nu\rho,\nu^{-a}\rho]);\sigma') \\
\intertext{and}
\delta([\rho,\nu^b\rho])\rtimes\sigma &= L_{proper}(\delta([\rho,\nu^b\rho]);\sigma)
+ L_{proper}(\delta([\rho,\nu^{-a}\rho]);\sigma')
\end{align*}
for strongly positive discrete series subrepresentation $\s'$ of $\d( [\nu^{\a} \rho,\nu^{b}\rho])\r\s$, using temperedness of $\tau_{temp}$ in the same way as in the previous case, we get that $\mu^{\ast}(\pi) \geq \d( [\nu \rho,\nu^{b}\rho]) \otimes \s$ and $\mu^{\ast}(\pi') \geq \d( [\rho,\nu^{b}\rho])\otimes \s$ (with multiplicity two). Transitivity of Jacquet modules implies that all irreducible constituents of $s_{GL}(\d( [\nu^{a} \rho,\nu^{b}\rho])\r\s)$ having all exponents non-negative are contained in Jacquet modules of $\tau_{temp}$.

Now we assume $a = 0$ (i.e., $\a = 1$). In this case, since $\tau_{temp}$ is a subrepresentation of $\rho \times \d( [\nu \rho,\nu^{b}\rho])\r\s$, the representation $\rho \otimes \d( [\nu \rho,\nu^{b}\rho])\otimes \s$ is in the Jacquet module of $\tau_{temp}$ and it directly follows that $s_{GL}(\tau_{temp}) \geq L(\rho , \d( [\nu \rho,\nu^{b}\rho]))\otimes \s$. Furthermore, since $\tau_{temp}$ is a subrepresentation of $\d( [\rho,\nu^{b}\rho])\r\s$, using Lemma 4.7 and Corollary 4.9 from \cite{T-temp}, we deduce that $\d( [\nu \rho,\nu^{b}\rho])\otimes \rho \r\s$ is in the Jacquet module of $\tau_{temp}$. Since the representation $\rho \r\s$ is irreducible, an irreducible subquotient $\d( [\nu \rho,\nu^{b}\rho])\otimes \rho \otimes \s$ appears with multiplicity two in the Jacquet module of $\tau_{temp}$. Therefore, it easily follows that $\d( [\rho,\nu^{b}\rho])\otimes \s$ appears with multiplicity two in the Jacquet module of $\tau_{temp}$, and all irreducible constituents of $s_{GL}(\d( [\rho,\nu^{b}\rho])\r\s)$ having all exponents non-negative are contained in Jacquet modules of $\tau_{temp}$.

This proves the lemma.
\end{proof}

Representations of segment type in case $\{\nu^{-\a}\rho,\nu^{\a}\rho\}\subseteq [\nu^{a } \rho,\nu^{b }\rho]$, i.e.,
$[\nu^{-\a}\rho,\nu^{\a}\rho]\subseteq [\nu^{a } \rho,\nu^{b }\rho]$, have been carefully studied in \cite{T-seg}. We take a moment to recall the basic properties of these representations, which, in the case of strictly positive reducibility, we denote by $\d( [\nu^{a } \rho,\nu^{b }\rho]_\pm;\s)$. In this case they are always non-zero (in other cases which we shall consider below, $\d( [\nu^{a } \rho,\nu^{b }\rho]_\pm;\s)$ will be sometimes 0).

Then in the case of strictly positive reducibility  we have
\begin{enumerate}

\item[(S1)] If $\d( [\nu^{a } \rho,\nu^{b }\rho]_\pm;\s)\ne 0$, then its Jacquet module contains $\d( [\nu^{a } \rho,\nu^{b }\rho])\o\s$ (by definition).

\

\item[(S2)] $\d( [\nu^{a } \rho,\nu^{b }\rho]_+;\s)$ can be characterized as a subquotient of  $\d( [\nu^{a } \rho,\nu^{b }\rho])\r\s$ whose minimal standard Jacquet module contains at least one irreducible subquotient with all non-negative exponents.

\

\item[(S3)] If $\d( [\nu^{a } \rho,\nu^{b }\rho]_-;\s)\ne 0$, then it can be characterized as a subquotient of  $\d( [\nu^{a } \rho,\nu^{b }\rho])$ $\r\s$  which is a representation of segment type, and whose minimal standard Jacquet module contains no irreducible subquotients with all non-negative exponents.

\

\item[(S4)]  If $L_{proper}(\d( [\nu^{a } \rho,\nu^{b }\rho]);\s)\ne 0$, then it does not contain $\d( [\nu^{a } \rho,\nu^{b }\rho])\o\s$ in its  Jacquet module.

\

\item[(S5)]  Segment representations are subrepresentations of $\d( [\nu^{a } \rho,\nu^{b }\rho])\r\s$.

\end{enumerate}

We shall return to these representations later  (i.e., in case $[-\a,\a]\subseteq [a,b]$).

Now we introduce a convention which will be frequently used to keep our notation and statements of results uniform. \\
{\bf Convention}: Suppose $a,b\in(1/2)\Z$, $b-a, a+b\in\Z_{\geq0}$ are such that
\begin{equation*}
[\nu^{a}\rho,\nu^{b}\rho]\cap \{\nu^{-\a}\rho,\nu^{\a}\rho\}=\emptyset.
\end{equation*}
Then $\delta([\nu^{a}\rho,\nu^{b}\rho])\r\s$ is irreducible and we define
\begin{align*}
\delta([\nu^{a}\rho,\nu^{b}\rho]_+;\s) &=
\begin{cases}
0&\text{if } [\nu^{a}\rho,\nu^{b}\rho]\cap [\nu^{-\a}\rho,\nu^{\a}\rho]=\emptyset;
\\
 \delta([\nu^{a}\rho,\nu^{b}\rho])\r\s &\text{if } [\nu^{a}\rho,\nu^{b}\rho]\subseteq [\nu^{-\a+1}\rho,\nu^{\a-1}\rho].
\end{cases}
\intertext{Furthermore, we define}
L_\a(\d([\nu^{a}\rho,\nu^{b}\rho]);\s) &=
\begin{cases}
 \delta([\nu^{a}\rho,\nu^{b}\rho])\r\s &\text{if } [\nu^{a}\rho,\nu^{b}\rho]\cap [\nu^{-\a}\rho,\nu^{\a}\rho]=\emptyset;
\\
0 &\text{if } [\nu^{a}\rho,\nu^{b}\rho]\subseteq [\nu^{-\a+1}\rho,\nu^{\a-1}\rho].
\end{cases}
\end{align*}

In case
$$
[\nu^{a}\rho,\nu^{b}\rho]\cap \{\nu^{-\a}\rho,\nu^{\a}\rho\}\ne\emptyset,
$$
we have already defined $\delta([\nu^{a}\rho,\nu^{b}\rho]_+;\s)$, and we simply set
$$
L_\a(\d([\nu^{a}\rho,\nu^{b}\rho]);\s)=L(\d([\nu^{a}\rho,\nu^{b}\rho]);\s).
$$

In the sequel, positive $\a\in (1/2)\Z$ will always denote the reducibility exponent for $\rho$ and $\sigma$.

We now proceed with the study of Jacquet modules of the representations of segment type, with respect to the maximal parabolic subgroups. Several possible cases will be studied separately. We emphasize that the convention which we have introduced enables us to state the results, obtained in different cases, in a uniform way.

\section{Representations of segment type
corresponding  to segments  not containing $[\nu^{-\a } \rho,\nu^{\a }\rho]$}

In this section we start our determination of Jacquet modules for representations of segment type. First we study such representations attached to segments which contain exactly one of $\{ \a, -\a \}$.

In what follows, we shall denote segments by
$
 [\nu^{-c } \rho,\nu^{d }\rho].
$
Clearly, $c$ (and $d$) must satisfy $c-\a\in\Z$. Furthermore, we assume
$
|c|\leq d.
$

In this section we consider the case when
\begin{equation*}
[\nu^{-\a } \rho,\nu^{\a }\rho]\not\subseteq [\nu^{-c } \rho,\nu^{d }\rho].
\end{equation*}

\begin{theorem} \label{tmprvi} Let $c,d\in(1/2)\Z$ be such that $d+c,d-c\in\Z_{\geq 0}$, $d-\a\in\Z$ and,
$$
-\a<-c\leq \a\leq d.
$$
Then
\begin{enumerate}

\item[$(A1)$] $\delta([\nu^{-c}\rho,\nu^{d}\rho])\r\s$ is a representation of length two, and the composition series consists of $\delta([\nu^{-c}\rho,\nu^{d}\rho]_+;\s)$ and $L(\delta([\nu^{-c}\rho,\nu^{d}\rho]);\s)$.

\item[$(A2)$] For $-c=\a$, $\delta([\nu^{-c}\rho,\nu^{d}\rho]_+;\s)$ is square integrable. For $-c=-\a+1$, the representation is tempered, but not square integrable. For $-\a+1<-c$, we have
\begin{equation*}
\delta([\nu^{-c}\rho,\nu^{d}\rho]_+;\s)=L(\delta([\nu^{-c}\rho,\nu^{\a-1}\rho]);\delta([\nu^{\a}\rho,\nu^{d}\rho]_+;\s)).
\end{equation*}

\item[$(A3)$] If $-c<d$, then

 $s_{top} (\delta([\nu^{-c}\rho,\nu^{d}\rho]_+;\s))= \nu^{d}\rho\o\delta([\nu^{-c}\rho,\nu^{d-1}\rho]_+;\s) + \nu^{c}\rho\o\delta([\nu^{-c+1}\rho,\nu^{d}\rho]_+;\s).
$\\
For $-c=d \ (=\a)$, we have $s_{top} (\d(\nu^{\a}\rho_+;\s))= \nu^{\a}\rho\o\s$.

\item[$(A4)$] If $-c<d$, then
\begin{align*}
s_{top}(L(\delta([\nu^{-c}\rho,\nu^{d}\rho]);\s)) & =  \nu^{d}\rho\o L_{\a}(\delta([\nu^{-c}\rho,\nu^{d-1}\rho]);\s) + {} \\
& \qquad + \nu^{c}\rho\o L_{\a}(\delta([\nu^{-c+1}\rho,\nu^{d}\rho]);\s).
\end{align*}\\
For $-c=d \ (=\a)$, we have $s_{top} (L(\nu^{\a}\rho;\s))= \nu^{-\a}\rho\o\s$.

\item[$(A5)$] $
s_{GL}(\delta([\nu^{-c}\rho,\nu^{d}\rho]_+;\s))=\sum_{i=-c-1}^{\a-1}
\delta([\nu^{-i}\rho,\nu^{c}\rho]) \t \delta([\nu^{i+1}\rho,\nu^{d}\rho])\o\s.
$

\item[$(A6)$] For $-c<d$ we have
\begin{align*}
s_{GL}(L(\delta([\nu^{-c}\rho,\nu^{d}\rho]);\s)) & = \sum_{i=\a}^{d}
\delta([\nu^{-i}\rho,\nu^{c}\rho]) \t \delta([\nu^{i+1}\rho,\nu^{d}\rho])\o\s \\
& = \sum_{i=\a}^{d} L(\delta([\nu^{-i}\rho,\nu^{c}\rho]) , \delta([\nu^{i+1}\rho,\nu^{d}\rho]))\o\s.
\end{align*}

\end{enumerate}
\end{theorem}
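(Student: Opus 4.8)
The proof proceeds by induction on the length $c+d$ of the segment; in the inductive step the six assertions are established in the order $(A1)$, $(A2)$, $(A3)$--$(A4)$, $(A5)$--$(A6)$, at each stage invoking the inductive hypothesis for the strictly shorter segments $[\nu^{-c}\rho,\nu^{d-1}\rho]$, $[\nu^{-c+1}\rho,\nu^{d}\rho]$ and $[\nu^{\a}\rho,\nu^{d}\rho]$, reading off the corresponding pieces through the Convention whenever they leave the scope of the theorem. For $(A1)$: the hypothesis $-\a<-c$ gives $\nu^{-\a}\rho\notin[\nu^{-c}\rho,\nu^{d}\rho]\ni\nu^{\a}\rho$, so $\d([\nu^{-c}\rho,\nu^{d}\rho])\r\s$ reduces by \cite{T-irr} and, by Theorem~4.1(ii) of \cite{Mu-CSSP}, has length two, with one constituent the proper Langlands quotient and the other explicitly described; that other constituent is identified with $\d([\nu^{-c}\rho,\nu^{d}\rho]_+;\s)$ in $(A2)$. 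Since $c<\a\le d$ forces $c\ne d$, the Convention gives $\d([\nu^{-c}\rho,\nu^{d}\rho]_-;\s)=0$.

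For $(A2)$ there are three sub-cases, and in each the point is to check that the non-Langlands-quotient constituent has, in its minimal standard Jacquet module, a subquotient with all non-negative exponents, so that the uniqueness lemma above and Definition~\ref{def1} identify it with $\d([\nu^{-c}\rho,\nu^{d}\rho]_+;\s)$. When $-c=\a$ the segment $[\nu^{\a}\rho,\nu^{d}\rho]$ is positive and that constituent is the strongly positive discrete series subrepresentation of $\d([\nu^{\a}\rho,\nu^{d}\rho])\r\s$. When $-c=-\a+1$, Theorem~4.1(ii) of \cite{Mu-CSSP} yields the unique tempered subquotient $\tau_{temp}$ of $\d([\nu^{-\a+1}\rho,\nu^{d}\rho])\r\s$, which is tempered but not a discrete series (being fully induced from the strongly positive $\d([\nu^{\a}\rho,\nu^{d}\rho]_+;\s)$), and comparing minimal Jacquet modules gives $\tau_{temp}=\d([\nu^{-c}\rho,\nu^{d}\rho]_+;\s)$. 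When $-\a+1<-c$, the same theorem gives $\d([\nu^{-c}\rho,\nu^{d}\rho])\r\s=L_{proper}(\d([\nu^{-c}\rho,\nu^{d}\rho]);\s)+L_{proper}(\d([\nu^{-c}\rho,\nu^{\a-1}\rho]);\s')$ with $\s'=\d([\nu^{\a}\rho,\nu^{d}\rho]_+;\s)$; the second summand contains $\d([\nu^{-c}\rho,\nu^{d}\rho])\o\s$ in its Jacquet module (the minimal Jacquet module of $\s'$ being strongly positive), so it equals $\d([\nu^{-c}\rho,\nu^{d}\rho]_+;\s)$, and since it is non-zero the standard module $\d([\nu^{-c}\rho,\nu^{\a-1}\rho])\r\s'$ must reduce, whence $L_{proper}=L$ there. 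The square-integrability/temperedness dichotomy follows from Casselman's criterion once $(A3)$ and $(A5)$ are available (or directly from \cite{T-seg}).

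For $(A3)$--$(A6)$: by Lemma~\ref{osn} applied to $\d([\nu^{-c}\rho,\nu^{d}\rho])\r\s$ (with $\mu^{\ast}(\s)=1\o\s$), the only cuspidal left factors in $\mu^{\ast}$ are $\nu^{d}\rho$ and $\nu^{c}\rho$, giving $s_{top}(\d([\nu^{-c}\rho,\nu^{d}\rho])\r\s)=\nu^{d}\rho\o(\d([\nu^{-c}\rho,\nu^{d-1}\rho])\r\s)+\nu^{c}\rho\o(\d([\nu^{-c+1}\rho,\nu^{d}\rho])\r\s)$ and $s_{GL}(\d([\nu^{-c}\rho,\nu^{d}\rho])\r\s)=\sum_{i=-c-1}^{d}\d([\nu^{-i}\rho,\nu^{c}\rho])\t\d([\nu^{i+1}\rho,\nu^{d}\rho])\o\s$. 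Expanding the two induced representations in the first identity by the inductive hypothesis $(A1)$ and the Convention produces exactly the four terms on the right of $(A3)$ and $(A4)$. To place the two terms of $(A3)$ in $\mu^{\ast}(\d([\nu^{-c}\rho,\nu^{d}\rho]_+;\s))$ one uses the embedding $\d([\nu^{-c}\rho,\nu^{d}\rho])\hookrightarrow\nu^{d}\rho\t\d([\nu^{-c}\rho,\nu^{d-1}\rho])$ and, for the $\nu^{c}\rho$-term, the folding embedding obtained from $\d([\nu^{-c}\rho,\nu^{d}\rho])\hookrightarrow\d([\nu^{-c+1}\rho,\nu^{d}\rho])\t\nu^{-c}\rho$ together with $\nu^{-c}\rho\r\s\cong\nu^{c}\rho\r\s$: Frobenius reciprocity puts constituents $\nu^{d}\rho\o\pi_1$ and $\nu^{c}\rho\o\pi_2$ in $\mu^{\ast}(\d([\nu^{-c}\rho,\nu^{d}\rho]_+;\s))$, and transitivity of Jacquet modules applied to $\d([\nu^{-c}\rho,\nu^{d}\rho])\o\s\le\mu^{\ast}(\d([\nu^{-c}\rho,\nu^{d}\rho]_+;\s))$ forces $\d([\nu^{-c}\rho,\nu^{d-1}\rho])\o\s\le\mu^{\ast}(\pi_1)$ and $\d([\nu^{-c+1}\rho,\nu^{d}\rho])\o\s\le\mu^{\ast}(\pi_2)$, so the uniqueness lemma gives $\pi_1=\d([\nu^{-c}\rho,\nu^{d-1}\rho]_+;\s)$ and $\pi_2=\d([\nu^{-c+1}\rho,\nu^{d}\rho]_+;\s)$; the remaining two terms then lie in $\mu^{\ast}(L(\d([\nu^{-c}\rho,\nu^{d}\rho]);\s))$, which is $(A4)$. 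Once $(A3)$ is known, $(A5)$ follows by combining it with the inductive hypothesis $(A5)$ for the two shorter segments and transitivity of Jacquet modules (peeling one cuspidal from the left links $s_{GL}$ to $s_{top}$), which forces the GL-data occurring in $s_{GL}(\d([\nu^{-c}\rho,\nu^{d}\rho]_+;\s))$ to be exactly $\d([\nu^{-i}\rho,\nu^{c}\rho])\t\d([\nu^{i+1}\rho,\nu^{d}\rho])$ for $-c-1\le i\le\a-1$; then $(A6)$ follows by subtracting $(A5)$ from $s_{GL}(\d([\nu^{-c}\rho,\nu^{d}\rho])\r\s)$ (or from the parallel embedding $L(\d([\nu^{-c}\rho,\nu^{d}\rho]);\s)\hookrightarrow\d([\nu^{-d}\rho,\nu^{c}\rho])\r\s$), the rewriting $\d([\nu^{-i}\rho,\nu^{c}\rho])\t\d([\nu^{i+1}\rho,\nu^{d}\rho])=L(\d([\nu^{-i}\rho,\nu^{c}\rho]),\d([\nu^{i+1}\rho,\nu^{d}\rho]))$ for $\a\le i\le d$ being an irreducibility statement valid because $c<\a\le i$ makes the two segments unlinked (they are separated by the gap $\nu^{c+1}\rho,\dots,\nu^{i}\rho$).

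\emph{Main obstacle.} The crux is upgrading the inclusions produced by Frobenius reciprocity and transitivity to the exact equalities of $(A3)$--$(A6)$ with the correct multiplicities --- for instance $\d([\nu^{-c}\rho,\nu^{d}\rho])\o\s$ can occur in $s_{GL}(\d([\nu^{-c}\rho,\nu^{d}\rho]_+;\s))$ with multiplicity two, coming from the indices $i=-c-1$ and $i=c$. The leverage that makes this go through is the uniqueness lemma above: among the constituents of $\d([\nu^{-c}\rho,\nu^{d}\rho])\r\s$ the one whose minimal Jacquet module exhibits a subquotient with all non-negative exponents is unique and equal to $\d([\nu^{-c}\rho,\nu^{d}\rho]_+;\s)$, and pushed through transitivity against the known $\mu^{\ast}$ of the induced representation and of the shorter segment-type and Langlands-quotient representations, this assigns every constituent to the correct factor. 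In addition one must treat the degenerate values $-c=\a$, $-c=-\a+1$, $-c=d$ and $d=\a$ separately, since there some of the shorter representations $\d([\nu^{-c}\rho,\nu^{d-1}\rho]_{\pm};\s)$, $\d([\nu^{-c+1}\rho,\nu^{d}\rho]_{\pm};\s)$ or the relevant $L_\a$ collapse to $0$ or to the entire induced representation through the Convention; these cases require only routine bookkeeping and no new ideas.
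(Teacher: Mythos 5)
Your overall strategy coincides with the paper's: a double induction, the structural formula giving $s_{top}$ and $s_{GL}$ of the induced representation, folding embeddings $\delta([\nu^{-c}\rho,\nu^{d}\rho])\hookrightarrow\delta([\nu^{-c+1}\rho,\nu^{d}\rho])\times\nu^{-c}\rho$ combined with $\nu^{-c}\rho\rtimes\sigma\cong\nu^{c}\rho\rtimes\sigma$, the citations to Proposition 3.1\,(i) and Theorem 4.1\,(ii) of \cite{Mu-CSSP} for the composition series and the Langlands data in $(A2)$, and the comparison after peeling one cuspidal factor (the Jacquet functor $r_{(p,(d+c)p)}$) to upgrade the inclusions in $(A5)$--$(A6)$ to equalities. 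The one organizational difference is that you prove $(A3)$--$(A4)$ before $(A5)$--$(A6)$ in every case, whereas the paper reverses this order when $c<0$: there all summands of $s_{GL}(\delta([\nu^{-c}\rho,\nu^{d}\rho])\rtimes\sigma)$ are irreducible with pairwise distinct cuspidal supports, so $(A5)$--$(A6)$ follow directly from a cuspidal-support count and are then used to locate the $s_{top}$ terms.

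There is one step where your written argument does not close. After placing $\nu^{d}\rho\otimes\delta([\nu^{-c}\rho,\nu^{d-1}\rho]_+;\sigma)$ and $\nu^{c}\rho\otimes\delta([\nu^{-c+1}\rho,\nu^{d}\rho]_+;\sigma)$ in $\mu^{*}(\delta([\nu^{-c}\rho,\nu^{d}\rho]_+;\sigma))$, you assert that ``the remaining two terms then lie in $\mu^{*}(L)$.'' This is a non sequitur: since $s_{top}$ of the induced representation is the multiplicity-one sum of four terms distributed between the two constituents, knowing that two of them sit in $\mu^{*}(\delta_+)$ does not prevent the $L_\alpha$-terms from sitting there as well; one must positively show that $\nu^{d}\rho\otimes L_\alpha(\delta([\nu^{-c}\rho,\nu^{d-1}\rho]);\sigma)$ and $\nu^{c}\rho\otimes L_\alpha(\delta([\nu^{-c+1}\rho,\nu^{d}\rho]);\sigma)$ occur in $\mu^{*}(L(\delta([\nu^{-c}\rho,\nu^{d}\rho]);\sigma))$. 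The paper does this with explicit embeddings of the Langlands quotient into its standard module $\delta([\nu^{-d}\rho,\nu^{c}\rho])\rtimes\sigma$ followed by commuting cuspidal factors to the front (producing a leading $\nu^{d}\rho$, respectively $\nu^{c}\rho$), together with the cuspidal-support constraint coming from the inductive $(A6)$ (the presence of $\nu^{-d}\rho$, resp.\ $\nu^{-\alpha}\rho$, in the support of the $L_\alpha$-factor). This is exactly the kind of argument you already use elsewhere, so the gap is fillable, but as written the assignment of the Langlands-quotient terms in $(A3)$--$(A4)$ is unjustified.
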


\begin{proof} For $-c=d \ (=\a)$, the theorem holds true (this is a very well known and simple fact). Therefore, in the proof we consider only the case
$
-c<d.
$

Recall that
\begin{equation} \label{s1}
s_{GL}(\delta([\nu^{-c}\rho,\nu^{d}\rho])\r\s) = \sum_{i=-c-1}^{d}
\delta([\nu^{-i}\rho,\nu^{c}\rho]) \t \delta([\nu^{i+1}\rho,\nu^{d}\rho])\o\s.
\end{equation}
At this point, we shall show that $(A2)$ follows from $(A5)$ and results obtained in \cite{Mu-CSSP}. The square integrability in case $-c = \a$ follows from the known fact that in this case $\delta([\nu^{-c}\rho,\nu^{d}\rho]_+;\s)$ is strongly positive, while temperedness in case $-c = - \a +1$ follows directly from $(A5)$. In the remaining cases it can be seen directly from Proposition 3.1\,(i) of \cite{Mu-CSSP} and Theorem 4.1\,(ii) of the same paper that we have a length two representation and the Langlands data as given in $(A2)$ (note that the representation $\d([\nu^\a\rho,\nu^{d}\rho]; \sigma)$ is strongly positive).

Observe that for any $\a+1\leq j \leq d+1$ we have
\begin{align*}
L(\d([\nu^{-c}\rho,\nu^{d}\rho]);\s) & \h \d([\nu^{-d}\rho,\nu^{c}\rho])\r\s \\
& \h \d([\nu^{-j+1}\rho,\nu^{c}\rho]) \t \d([\nu^{-d}\rho,\nu^{-j}\rho])\r\s \\
& \cong \d([\nu^{-j+1}\rho,\nu^{c}\rho]) \t \d([\nu^{j}\rho,\nu^{d}\rho])\r\s,
\end{align*}
since the representation $\d([\nu^{j}\rho,\nu^{d}\rho])\r\s$ is irreducible (\cite{T-irr}). Now in $s_{GL}(L(\delta([\nu^{c}\rho,\nu^{d}\rho]);\s))$ we must have representations with cuspidal supports which follow from all embeddings of the above type. Such representations are among subquotients of \eqref{s1} and considered cuspidal supports show up precisely in the following part of \eqref{s1}:
\begin{equation*}
\sum_{i=\a}^{d}
\delta([\nu^{-i}\rho,\nu^{c}\rho]) \t \delta([\nu^{i+1}\rho,\nu^{d}\rho])\o\s.
\end{equation*}
Observe that all representations in the above sum are irreducible. This implies
\begin{equation} \label{form1}
s_{GL}(L(\delta([\nu^{c}\rho,\nu^{d}\rho]);\s))\geq \sum_{i=\a}^{d}
\delta([\nu^{-i}\rho,\nu^{c}\rho]) \t \delta([\nu^{i+1}\rho,\nu^{d}\rho])\o\s.
\end{equation}
The formula for $\mu^*$ gives
\begin{equation} \label{form2}
s_{top} (\delta([\nu^{-c}\rho,\nu^{d}\rho])\r\s))= \nu^{d}\rho\o\delta([\nu^{-c}\rho,\nu^{d-1}\rho])\r\s + \nu^{c}\rho\o\delta([\nu^{-c+1}\rho,\nu^{d}\rho])\r\s.
\end{equation}

Now we turn to the proof of Theorem \ref{tmprvi}.

Note that in case $-c=\a$, $(A1)$ again follows from Proposition 3.1\,(i), while $(A2)$, $(A3)$ and $(A5)$ are direct consequences of Theorem 4.6 of \cite{Ma-SP} and Theorem 5.3 of \cite{Ma-JMSP}. One gets $(A4)$ and $(A6)$ using, additionally, the formulas (\ref{form1}) and (\ref{form2}). This completes the proof for this case. Therefore, in what follows, we shall always assume
\begin{equation*}
-\a+1\leq -c\leq \a-1.
\end{equation*}

Now we shall prove the theorem for $d=\a$. The proof will be by induction on $-c$. For $-c=\a$ we have observed that the theorem holds true. We fix $-c$ as above and assume that the theorem holds for $-c+1$ (and $d=\a$; clearly, $c<\a$). The inductive assumption implies
\begin{multline*}
s_{top} (\delta([\nu^{-c}\rho,\nu^{\a}\rho])\r\s))= \nu^{\a}\rho\o\delta([\nu^{-c}\rho,\nu^{\a-1}\rho]_+;\s)
+ \\
+ \nu^{c}\rho\o\delta([\nu^{-c+1}\rho,\nu^{\a}\rho]_+;\s)
 + \nu^{c}\rho\o L_\a(\delta([\nu^{-c+1}\rho,\nu^{\a}\rho]);\s).
\end{multline*}
Obviously, the first summand must be in the Jacquet  module of $\delta([\nu^{-c+1}\rho,\nu^{\a}\rho]_+;\s)$, while the last summand must be in the Jacquet module of the Langlands quotient. Observe that, if $c$ is non-negative, the second summand must be in the Jacquet module of $\delta([\nu^{-c}\rho,\nu^{\a}\rho]_+;\s)$. We prove that this also holds in the case $c<0$. Obviously, the term $\delta([\nu^{-c}\rho,\nu^{\a}\rho])\o\s$ in \eqref{s1} is the only term with such cuspidal support. Thus
\begin{align*}
\delta([\nu^{-c}\rho,\nu^\alpha\rho]_+;\sigma)
&\hookrightarrow \delta([\nu^{-c}\rho,\nu^\alpha\rho]) \rtimes \sigma \\
&\hookrightarrow  \delta([\nu^{-c+1}\rho,\nu^\alpha\rho])\times\nu^{-c}\rtimes\sigma \\
&\cong \delta([\nu^{-c+1}\rho,\nu^\alpha\rho])\times\nu^c\rho\rtimes\sigma \\
&\cong \nu^c\rho\times\delta([\nu^{-c+1}\rho,\nu^\alpha\rho])\rtimes\s.
\end{align*}
Using Frobenius reciprocity and transitivity of Jacquet modules we deduce that the second summand  in $s_{top} (\delta([\nu^{-c}\rho,\nu^{\a}\rho])\r\s)
$ is in the Jacquet module of $\delta([\nu^{-c}\rho,\nu^{\a}\rho]_+;\s)$.

This analysis of $s_{top} (\delta([\nu^{-c}\rho,\nu^{\a}\rho])\r\s)
$ implies length two of $\delta([\nu^{-c}\rho,\nu^{\a}\rho])\r\s
$, and the rest of $(A1)$. Also, it implies $(A3)$ and $(A4)$. Observe that we have just proved
\begin{equation*}
s_{top}(L_\a(\delta([\nu^{-c}\rho,\nu^{\a}\rho]);\s))=\nu^{c}\rho\o L_\a(\delta([\nu^{-c+1}\rho,\nu^{\a}\rho]);\s),
\end{equation*}
while the inductive assumption implies
\begin{equation*}
s_{GL}(L_\a(\delta([\nu^{-c+1}\rho,\nu^{\a}\rho]);\s))= \delta([\nu^{-\a}\rho,\nu^{c-1}\rho])\o\s.
\end{equation*}
This implies that the minimal Jacquet module of $L_\a(\delta([\nu^{-c+1}\rho,\nu^{\a}\rho]);\s)$ is $\nu^{c}\rho\o\nu^{c-1}\rho\o\cdots\o \nu^{-\a}\rho\o\s$. This yields $(A6)$, which directly implies $(A5)$ using $(A1)$ and \eqref{s1}. This completes the case $d=\a$.

It remains to consider the case
\begin{equation*}
-\a<-c<\a<d.
\end{equation*}
We shall prove the theorem by induction on $d$. Observe that the theorem holds for $d=\a$. Fix $d>\a$ and suppose that the theorem holds for $d-1$. To prove the theorem for $d$, we proceed by induction on $-c$. Recall that the theorem holds for $-c=-\a$. Fix $-c>-\a$, and suppose that the theorem holds for $-c+1$.

We consider two cases. First we discuss the case
\begin{equation*}
0<-c.
\end{equation*}
 Observe that in the sum
\begin{equation*}
s_{GL}(\delta([\nu^{-c}\rho,\nu^{d}\rho])\r\s)
=
\sum_{i=-c-1}^{d}
\delta([\nu^{-i}\rho,\nu^{c}\rho]) \t \delta([\nu^{i+1}\rho,\nu^{d}\rho])\o\s
\end{equation*}
all the summands are irreducible since we have $c < i-1$. Also, all the summands have different cuspidal supports. In this case $\delta([\nu^{-c}\rho,\nu^{d}\rho]_+,
\sigma)$ is the subquotient of $\delta([\nu^{-c}\rho,\nu^{d}\rho])\r\s$  which has $\delta([\nu^{-c}\rho,\nu^{d}\rho])\o \sigma$ in its Jacquet module. Observe that the last representation has multiplicity one in the full Jacquet module, and it is a direct summand in the Jacquet module (other terms have different cuspidal support). This implies
\begin{align*}
\delta([\nu^{-c}\rho,\nu^d\rho]_+,\sigma)
&\hookrightarrow
\nu^d\rho\times\cdots\times\nu^{-c+1}\rho\times\nu^{-c}\rho\rtimes\sigma \\
&\cong \nu^d\rho\times\cdots\times\nu^{-c+1}\rho\times\nu^c\rho\rtimes\sigma \\
&\cong \dots
\cong  \nu^c\rho\times\nu^d\rho\times\cdots\times\nu^{-c+1}\rho\rtimes\sigma \\
&\cong \dots
\cong
\nu^c\rho\times\nu^{c-1}\rho\times\cdots\times\nu^{\alpha+1}\rho\times\nu^d\rho\times\cdots\times\nu^\alpha\rho\rtimes\sigma.
\end{align*}

Now above observations regarding Jacquet modules and cuspidal supports imply that $\delta([\nu^{-c}\rho,\nu^{d}\rho]_+,
\sigma)$ contains in its Jacquet module of $GL$-type representations with the same $GL$-cuspidal support as the following representations:
\begin{equation*}
\delta([\nu^{-i+1}\rho,\nu^{c}\rho]) \t \delta([\nu^{i}\rho,\nu^{d}\rho]), \       -c\leq i \leq  \a.
\end{equation*}
This implies that
\begin{equation*}
\sum_{i=-c-1}^{\a-1}\delta([\nu^{-i}\rho,\nu^{c}\rho]) \t \delta([\nu^{i+1}\rho,\nu^{d}\rho]) \otimes \s \leq s_{GL}(\delta([\nu^{-c}\rho,\nu^{d}\rho]_+,
\sigma)).
\end{equation*}

We have already seen that
\begin{equation} \label{in1}
\sum_{i=\a}^{d}
 \delta([\nu^{-i}\rho,\nu^{c}\rho]) \t \delta([\nu^{i+1}\rho,\nu^{d}\rho]) \o\s
 \leq
 s_{GL}(L(\d([\nu^{-c}\rho,\nu^{d}\rho]);\s)).
\end{equation}
Since the sum of the left-hand sides of previous two formulas is the whole Jacquet module of $GL$-type of $\delta([\nu^{-c}\rho,\nu^{d}\rho])\r\s$, we conclude that $\delta([\nu^{-c}\rho,\nu^{d}\rho])\r\s$ is a representation of length two and in the above two inequalities we actually have equalities.

Hence, we have proved $(A1)$, $(A5)$ and $(A6)$.

Note that the inductive assumptions imply
\begin{multline*}
s_{top} (\delta([\nu^{-c}\rho,\nu^{d}\rho])\r\s))= \nu^{d}\rho\o\delta([\nu^{-c}\rho,\nu^{d-1}\rho]_+;\s) + \nu^{d}\rho\o L_\a(\delta([\nu^{-c}\rho,\nu^{d-1}\rho]);\s) \\
+ \nu^{c}\rho\o\delta([\nu^{-c+1}\rho,\nu^{d}\rho]_+;\s)
 + \nu^{c}\rho\o L_\a(\delta([\nu^{-c+1}\rho,\nu^{d}\rho]);\s).
\end{multline*}
Obviously, the first summand belongs to Jacquet module of $\delta([\nu^{-c}\rho,\nu^{d}\rho]_+;\s)$, while it can be seen, using the same argument as before, that the last one belongs to Jacquet module of the Langlands quotient. From $(A5)$ it follows that $\delta([\nu^{-c}\rho,\nu^{d}\rho]_+;\s)$ must have the third summand in its Jacquet module (in Jacquet module we have to have a subquotient of the form $\nu^c\rho\o\dots\o\s$). Similarly, $(A6)$ implies that the second term must be in the Jacquet module of the Langlands quotient. This proves $(A3)$ and $(A4)$.

It remains to prove the theorem in the case when
\begin{equation*}
-\a<-c\leq 0 <\a<d.
\end{equation*}
From the inductive assumption we know that
\begin{multline*}
s_{top} (\delta([\nu^{-c}\rho,\nu^{d}\rho])\r\s))= \nu^{d}\rho\o\delta([\nu^{-c}\rho,\nu^{d-1}\rho]_+;\s) + \nu^{d}\rho\o L_\a(\delta([\nu^{-c}\rho,\nu^{d-1}\rho]);\s) \\
+
\nu^{c}\rho\o\delta([\nu^{-c+1}\rho,\nu^{d}\rho]_+;\s)
 + \nu^{c}\rho\o L_\a(\delta([\nu^{-c+1}\rho,\nu^{d}\rho]);\s).
\end{multline*}
Observe that all four summands above are non-zero. Looking at the most positive terms, we see that the first and the third term must come from $\delta([\nu^{-c}\rho,\nu^{d}\rho]_+;\s)$, while the last one comes from the Langlands quotient. It remains to determine where does the second summand belong.

Recall that the Langlands quotient embeds into
\begin{align*}
L(\delta([\nu^{c}\rho,\nu^{-d}\rho]);\s) & \h \nu^{c}\rho\t \cdots \t \nu^{-d}\rho\r\s \\
& \cong \nu^{c}\rho\t \cdots\t \nu^{d}\r\s  \\
& \cong \nu^{d}\rho\t \nu^{c}\rho\t \cdots\t \nu^{-d+1}\rho\r\s,
\end{align*}
since $\a< d$, $-\a+1\leq-c$, and $\a<d$. This shows that the second summand is in Jacquet module of the Langlands quotient. In this way we get the proof of $(A1)$, $(A3)$ and $(A4)$.

Observe that $(A5)$ holds if and only if $(A6)$ holds (this is a consequence of \eqref{s1} and the length two of $\delta([\nu^{c}\rho,\nu^{d}\rho])\r\s$). Furthermore, by the above estimate \eqref{in1}, we also know that the left-hand side of $(A6)$ contains the right-hand side.

Let $\rho$ be a representation of $GL(p,F)$. We show that an equality holds on the level of semi simplifications of Jacquet modules $r_{(p,(d+c)p)}$ (from Bernstein-Zelevinsky $GL$-classification) when applied to the left- and right-hand sides of $(A6)$. The result for the left-hand side follows from the inductive assumption $(A4)$.
Using this assumption, we deduce that the corresponding Jacquet module of the left-hand side equals
\begin{multline*}
\sum_{i=\a}^{d-1}
\nu^d\rho\o\delta([\nu^{-i}\rho,\nu^{c}\rho]) \t \delta([\nu^{i+1}\rho,\nu^{d-1}\rho])\o\s \\
+\sum_{i=\a}^{d} \nu^c\rho\o
\delta([\nu^{-i}\rho,\nu^{c-1}\rho]) \t \delta([\nu^{i+1}\rho,\nu^{d}\rho])\o\s.
\end{multline*}
Applying the Jacquet module $r_{(p,(d+c)p)}$ to the right-hand side of $(A6)$ we obtain
\begin{multline*}
\bigg(\sum_{i=\a}^{d} r_{(p,(d+c)p)}(
\delta([\nu^{-i}\rho,\nu^{c}\rho]) \t \delta([\nu^{i+1}\rho,\nu^{d}\rho]))\bigg)\o\s \\
=\bigg(\sum_{i=\a}^{d} \nu^c\rho\o
\delta([\nu^{-i}\rho,\nu^{c-1}\rho]) \t \delta([\nu^{i+1}\rho,\nu^{d}\rho])\bigg)\o\s \\
+\bigg(\sum_{i=\a}^{d-1} \nu^d\rho\o
\delta([\nu^{-i}\rho,\nu^{c}\rho]) \t \delta([\nu^{i+1}\rho,\nu^{d-1}\rho])\bigg)\o\s.
\end{multline*}
Observe that we have proved the equality. Therefore, $(A6)$, and $(A5)$, hold true.

Note that in this case $\delta([\nu^{-c}\rho,\nu^{d}\rho])\o\s$ has multiplicity two in the  Jacquet module of $(A5)$.

The proof is now complete.
\end{proof}

In the case when a discrete series subquotient appears, we have the following:

\begin{remark}
If $-\a<-c\leq \a\leq d$ and $\delta([\nu^{-c}\rho,\nu^{d}\rho]_+;\s)$ is square integrable, i.e., $-c = \a$, we note that $\delta([\nu^{-c}\rho,\nu^{d}\rho]_+;\s)$ is strongly positive. Furthermore, $\Jord(\delta([\nu^{-c}\rho,\nu^{d}\rho]_+;\s)) =$ $\Jord(\s) \setminus \{ (2 \a - 1, \rho) \} \cup \{ (2d+1, \rho)\}$.
\end{remark}

In the sequel, we shall symmetrize notation, i.e., we will define $\delta([\nu^{-d}\rho,\nu^{c}\rho]_+;\s)$ to be $\delta([\nu^{-c}\rho,\nu^{d}\rho]_+;\s)$, $L(\delta([\nu^{-d}\rho,\nu^{c}\rho]);\s)$ to be $L(\delta([\nu^{-c}\rho,\nu^{d}\rho]);\s)$, etc.

Assume $-\a<-c\leq \a\leq d$. Consider
\begin{multline*}
\mu^*\left(\delta([\nu^{-c}\rho,\nu^{d}\rho])\r\s\right) =
\sum_{i= -c-1}^{ d}
\sum_{j=i}^{ d}
\delta([\nu^{-i}\rho,\nu^{c}\rho])
 \times
\delta([\nu^{j+1} \rho,\nu^{d}\rho]) \otimes
\delta([\nu^{i+1} \rho,\nu^{j}\rho])\r\s \\
=
\sum_{i= -c-1}^{ \a-1}
\sum_{j=i}^{ d}
\delta([\nu^{-i}\rho,\nu^{c}\rho])
 \times
\delta([\nu^{j+1} \rho,\nu^{d}\rho]) \otimes
\delta([\nu^{i+1} \rho,\nu^{j}\rho])\r\s + \\
 +
\sum_{i= \a}^{ d}
\sum_{j=i}^{ d}
\delta([\nu^{-i}\rho,\nu^{c}\rho])
 \times
\delta([\nu^{j+1} \rho,\nu^{d}\rho]) \otimes
\delta([\nu^{i+1} \rho,\nu^{j}\rho])\r\s.
\end{multline*}
Observe that if some exponent appearing in the cuspidal support on the left-hand side of the tensor product, is less than or equal to $- \a$, then $[-\a,\a]\cap [i+1,j]=\emptyset.$

\begin{corollary} \label{cor1}
Let $c,d\in(1/2)\Z$ such that $d+c,d-c\in\Z_{\geq 0}$, $d-\a\in\Z$ and $-\a<-c\leq \a\leq d$. We have
\begin{align*}
\mu^*\big(&\delta([\nu^{-c}\rho,\nu^{d}\rho]_+;\s)\big) 
\\
& = \sum_{i= -c -1}^{ d-1%
}\sum_{j=i+1}^{ d} \delta([\nu^{-i}\rho,\nu^{c}\rho])  \times \delta([\nu^{j+1}\rho,\nu^{d}\rho]) \otimes
\delta([\nu^{i+1} \rho,\nu^{j}\rho]_+;\s) +{} 
\\
& + \mkern-20mu\sum_{-c-1\le i\le c-1}\ \sum_{i+1\le j\le
c}\mkern-75mu\rule[-4.5ex]{0pt}{2ex}_{i+j < -1}\mkern25mu
\delta([\nu^{-i}\rho,\nu^{c}\rho])
 \times
\delta([\nu^{j+1} \rho,\nu^{d}\rho]) \otimes
L_\a(\delta([\nu^{i+1} \rho,\nu^{j}\rho]);\s) +{} \\[-1.5ex]
&\mkern100mu+ \mkern-10mu\sum_{i=-c-1}^{\a - 1}
\delta([\nu^{-i}\rho,\nu^{c}\rho]) \t \delta([\nu^{i+1}\rho,\nu^{d}\rho])\o\s,
\end{align*}
and
\begin{align*}
\mu^*\big(&L(\delta([\nu^{-c}\rho,\nu^d\rho]);\sigma)\big) 
\\
&= \mu^\ast\big(L_\alpha(\delta([\nu^{-c}\rho,\nu^d\rho]);\sigma)\big) = \mu^\ast\big(L_{proper}(\delta([\nu^{-c}\rho,\nu^d\rho]);\sigma)\big)  
\\
&= \mkern-20mu\sum_{-c-1\le i\le d-1%
}\ \sum_{i+1\le j\le
d}\mkern-75mu\rule[-4.5ex]{0pt}{2ex}_{0\le i+j}\mkern25mu
L(\delta([\nu^{-i}\rho,\nu^c\rho]),\delta([\nu^{j+1}\rho,\nu^d\rho])\big) \otimes
L_\alpha(\delta([\nu^{i+1}\rho,\nu^j\rho]);\sigma)) +{}  
\\[-1.5ex]
&\mkern100mu+ \mkern-10mu\sum_{i=\alpha}^d
L(\delta([\nu^{-i}\rho,\nu^c\rho]),\delta([\nu^{i+1}\rho, \nu^d\rho])) \otimes
\sigma. 
\end{align*}
\end{corollary}

\begin{proof} Note that, by our convention introduced earlier, the second sum appearing in $\mu^*\left(\delta([\nu^{-c}\rho,\nu^{d}\rho]_+;\s)\right)$ is zero, while the first sum appearing in $\mu^*\left(L(\delta([\nu^{-c}\rho,\nu^{d}\rho]);\s)\right)$ equals
$$  \sum_{i=-c-1}^{d} \sum_{j=i+1}^{d} \delta([\nu^{-i}\rho,\nu^{c}\rho]) \times
\delta([\nu^{j+1} \rho,\nu^{d}\rho]) \otimes
L_\a(\delta([\nu^{i+1} \rho,\nu^{j}\rho]);\s)).$$
However, using our convention, the statements of our main results regarding Jacquet modules look essentially identical in all the cases considered.

The following identity is a direct consequence of the structural formula:
\begin{multline*}   
\mu^*(\delta([\nu^{-c}\rho,\nu^d\rho])\rtimes\delta) 
=\sum_{i=-c-1}^d \sum_{j=i}^d
\delta([\nu^{-i}\rho,\nu^c\rho])\times\delta([\nu^{j+1}\rho,\nu^d\rho]) \otimes{}\\
\otimes
\big( \delta\big([\nu^{i+1}\rho,\nu^j\rho]_+;\sigma)+L_\alpha(\delta([\nu^{i+1}\rho,\nu^j\rho]);\sigma)\big)
.
\end{multline*}

We shall first analyze the case when $\delta([\nu^{i+1} \rho,\nu^{j}\rho])\r\s$ is irreducible. Since $-\a + 1 \leq -c\leq i\leq d$, irreducibility can happen only in the following two cases.

The first case is $j<\a$. Then we have $\delta([\nu^{i+1} \rho,\nu^{j}\rho]_+;\s)=\delta([\nu^{i+1} \rho,\nu^{j}\rho])\r\s$ and $L_\a(\delta([\nu^{i+1} \rho,\nu^{j}\rho]);\s)=0$ (by our convention). Now it is enough to prove that the Jacquet module of Langlands quotient cannot contain
\begin{equation*}
\delta([\nu^{-i}\rho,\nu^{c}\rho])
 \times
\delta([\nu^{j+1} \rho,\nu^{d}\rho]) \otimes\delta([\nu^{i+1} \rho,\nu^{j}\rho])\r\s.
\end{equation*}
Suppose it does. Then the formula for $s_{GL}(\delta([\nu^{i+1} \rho,\nu^{j}\rho])\r\s)$ implies that in the  minimal non-zero standard Jacquet module we would have at least one term of the form $\dots \o \nu^{x}\rho\o\s$ for $- \a + 1 \leq x \leq \a$. On the other hand, $(A6)$ tells us that such subquotients for Jacquet module of the Langlands quotient can be only of the form
\begin{equation*}
\dots \o \nu^{-i+1}\rho\o\s \text{ \ where \ } \ \a+1\leq i \leq d+1,   \text{ \ or \ } \dots \o \nu^{i}\rho\o\s,  \text{ \ where \ } \ \a+1\leq i \leq d.
\end{equation*}
Obviously, the term of the form $\dots \o \nu^{x}\rho\o\s$, for $- \a + 1 \leq x \leq \a$, cannot be among them, and we get a contradiction.

The second case when one has irreducibility of $\delta([\nu^{i+1} \rho,\nu^{j}\rho])\r\s$ is when
\begin{equation*}
\a<i+1.
\end{equation*}
Then $\delta([\nu^{i+1} \rho,\nu^{j}\rho]_+;\s)=0$ and $L_\a(\delta([\nu^{i+1} \rho,\nu^{j}\rho]);\s)=\delta([\nu^{i+1} \rho,\nu^{j}\rho])\r\s$ (by our convention). Now it is enough to prove that
\begin{equation*}
\delta([\nu^{-i}\rho,\nu^{c}\rho])
 \times
\delta([\nu^{j+1} \rho,\nu^{d}\rho]) \otimes\delta([\nu^{i+1} \rho,\nu^{j}\rho])\r\s
\end{equation*}
cannot occur in the Jacquet module of $\delta([\nu^{-c} \rho,\nu^{d}\rho]_+;\s)$. Suppose it does.
Then in the minimal non-zero standard Jacquet module of $\delta([\nu^{-c} \rho,\nu^{d}\rho]_+;\s)$ we would have a term of the form $\dots\o\nu^{i+1}\rho\o\s$, where $\a<i+1$. On the other hand, the formula in $(A5)$ implies that $\delta([\nu^{-c} \rho,\nu^{d}\rho]_+;\s)$ can have in that Jacquet module only the terms of the form
\begin{equation*}
\dots\o \nu^{-i'+1}\o\s \text{ \ where \ } -c+1\leq i'\leq \a \text{\quad or \quad} \dots\o \nu^{i'}\o\s \text{ \ where \ }-c\leq i'\leq \a .
\end{equation*}
Obviously, the above term of the form $\dots\o\nu^{i+1}\rho\o\s$ with $\a<i+1$ can not be among these terms. Again we have got a contradiction.

It remains to consider  the case when $\delta([\nu^{i+1} \rho,\nu^{j}\rho])\r\s$ reduces. In this case we must have $-\a+1\leq i+1\leq \a\leq j$.
For the proof of Corollary \ref{cor1}, we need to prove two facts. The first one is that
\begin{equation*}
\delta([\nu^{-i}\rho,\nu^{c}\rho])
 \times
\delta([\nu^{j+1} \rho,\nu^{d}\rho]) \otimes L_\a(\delta([\nu^{i+1} \rho,\nu^{j}\rho]);\s)
\end{equation*}
cannot be in the Jacquet module of the $\delta([\nu^{-c} \rho,\nu^{d}\rho]_+;\s)$. Suppose, to the contrary, that it is in that Jacquet module. Now $(A6)$ (applied to $L_\a(\delta([\nu^{i+1} \rho,\nu^{j}\rho]);\s)$) shows us that in the minimal non-zero standard Jacquet module of $\delta([\nu^{-c} \rho,\nu^{d}\rho])_{+};\s)$ we would have at least one  term of the form $\dots \o \nu^{\a+1}\rho\o\s$. Recall that $(A5)$ tells us that such subquotients for the Jacquet module of $\delta([\nu^{-c} \rho,\nu^{d}\rho]_+;\s)$ can be only of the form
\begin{equation*}
\dots \o \nu^{-i+1}\rho\o\s \text{ \ where \ } \ -c+1\leq i  \leq \a,   \text{ \ or \ } \dots \o \nu^{i}\rho\o\s,  \text{ \ where \ } \ -c\leq i \leq \a.
\end{equation*}
Observe that the above terms cannot contain a term of the form $\dots \o \nu^{\a+1}\rho\o\s$. Therefore, we get a contradiction.

It can be seen in a completely analogous manner that
\begin{equation*}
\delta([\nu^{-i}\rho,\nu^{c}\rho])
 \times
\delta([\nu^{j+1} \rho,\nu^{d}\rho]) \otimes\delta([\nu^{i+1} \rho,\nu^{j}\rho]_+;\s)
\end{equation*}
cannot be in the Jacquet module of the $L_\a(\delta([\nu^{-c} \rho,\nu^{d}\rho]);\s)$.

The proof of Corollary \ref{cor1} is now complete.
\end{proof}

\begin{remark}
Suppose that $c=d<\a$. With the above convention regarding symmetrization we have
\begin{equation*}
\mu^*\left(\delta([\nu^{-c}\rho,\nu^{c}\rho]_+;\s)\right) 
 =
\sum_{i= -c
-1}^{ c}
\sum_{j=i}^{ c}
\delta([\nu^{-i}\rho,\nu^{c}\rho])
 \times
\delta([\nu^{j+1} \rho,\nu^{c}\rho]) 
\otimes
\delta([\nu^{i+1} \rho,\nu^{j}\rho]_+;\s).
\end{equation*}
Analogous relation holds if we put $L_{proper}$ (or $L_\a$) on the left-hand side, and $L_\a$ on the right-hand side (since all terms are 0).
\end{remark}

\section{Representations of segment type
corresponding  to segments   containing $[\nu^{-\a } \rho,\nu^{\a }\rho]$}

In this section we suppose $
0 \leq \a \leq c \leq d$. We shall first recall some facts from \cite{T-seg}, more details can be found there.

First we recall the case $c=d.$ Consider the representation $\delta([\nu^{-c} \rho,\nu^{c}\rho])\r\s$. It is unitarizable, multiplicity one representation of length at most two, whose each irreducible subquotient is a subrepresentation, and has $\delta([\nu^{-c} \rho,\nu^{c}\rho])\o\s$ in its Jacquet module.

For an irreducible subquotient $\pi$ of $\delta([\nu^{-c} \rho,\nu^{c}\rho])\r\s$ we easily see that
\begin{equation*}
\sum_{i=-c-1}^{-\a-1}
\delta([\nu^{-i}\rho,\nu^{c}\rho]) \t \delta([\nu^{i+1}\rho,\nu^{c}\rho])\o\s \leq s_{GL}(\pi) .
\end{equation*}

Furthermore, $\delta([\nu^{-c} \rho,\nu^{c}\rho])\r\s$ and $ \delta([\nu^{-c}\rho,\nu^{\a-1}\rho]) \rtimes \delta([\nu^{\a}\rho,\nu^{c}\rho];\s)$ have precisely one irreducible subquotient in common. This subquotient has ''the most positive part'' in its Jacquet module and is equal to $\delta([\nu^{-c} \rho,\nu^{c}\rho]_+;\s)$, while the other irreducible subquotient of $\delta([\nu^{-c} \rho,\nu^{c}\rho])\r\s$ is $\delta([\nu^{-c} \rho,\nu^{c}\rho]_-;\s).$

Let us now consider the case $ \a \leq c<d$. We have defined $\delta([\nu^{-c} \rho,\nu^{d}\rho]_+;\s)$ by the most positive term in the Jacquet module. This most positive part is  also in the Jacquet module of
\begin{equation*}
\delta([\nu^{-c}\rho,\nu^{\a-1}\rho]) \t \delta([\nu^{\a}\rho,\nu^{d}\rho])\r\s
\end{equation*}
and it follows that the multiplicity of $\delta([\nu^{-c}\rho,\nu^{d}\rho])\o\s$ is at most one in the Jacquet module of $\delta([\nu^{-c} \rho,\nu^{d}\rho]_+;\s)$. We denote the other irreducible subquotient (which is also a subrepresentation) of $\delta([\nu^{-c}\rho,\nu^{d}\rho])\r\s$ which has in its Jacquet module $\delta([\nu^{-c}\rho,\nu^{d}\rho])\o\s$ by $\delta([\nu^{-c}\rho,\nu^{d}\rho]_-;\s)$.

Since $\delta([\nu^{-d}\rho,\nu^{c}\rho])\o\s$ has the multiplicity one in the Jacquet module of $L(\delta([\nu^{-c}\rho,\nu^{d}\rho]);\s)$, this part of Jacquet module characterizes  $L(\delta([\nu^{-c}\rho,\nu^{d}\rho]);\s)$ as an irreducible subquotient of $\delta([\nu^{-c}\rho,\nu^{d}\rho])\r\s$ for which
\begin{equation*}
L(\delta([\nu^{-c}\rho,\nu^{d}\rho]);\s)\ne \delta([\nu^{-c}\rho,\nu^{d}\rho]_\pm;\s).
\end{equation*}
Therefore, we have identified three different irreducible subquotients of the induced representation $\delta([\nu^{-c}\rho,\nu^{d}\rho])\r\s$ (all of multiplicity one).

Later on we will use several times the following technical lemma:

\begin{lemma}\label{lematehn} Suppose that $\a \geq 0$ is such that $\nu^{\a} \rho \r \s$ reduces and that for $c < d$ we have $s_{GL}(L(\delta([\nu^{-c}\rho,\nu^{d}\rho]);\s)) = \sum_{i=\a}^{d} L(\delta([\nu^{-i}\rho,\nu^{c}\rho]) , \delta([\nu^{i+1}\rho,\nu^{d}\rho]))\o\s$. Let $c'\leq d'$ satisfy
\begin{equation*}
-c\leq -c'\leq-\a \leq \a\leq d'\leq d.
\end{equation*}
Then the Jacquet  module of $L(\delta([\nu^{-c}\rho,\nu^{d}\rho]);\s))$ does not contain a representation of the form
\begin{equation*}
\pi'\o\delta([\nu^{-c'}\rho,\nu^{d'}\rho])\o\s
\end{equation*}
for any irreducible representation $\pi'$ of a general linear group.

\end{lemma}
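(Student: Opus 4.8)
The plan is to argue by contradiction using transitivity of Jacquet modules, combined with the explicit formula for $s_{GL}(L(\delta([\nu^{-c}\rho,\nu^{d}\rho]);\s))$ that is assumed in the hypothesis. Suppose that $\pi'\o\delta([\nu^{-c'}\rho,\nu^{d'}\rho])\o\s$ occurs in the Jacquet module of $L(\delta([\nu^{-c}\rho,\nu^{d}\rho]);\s)$ for some irreducible $\pi'$ of a general linear group. First I would apply transitivity of Jacquet modules: passing through the maximal parabolic of $GL$-type, the $GL$-cuspidal support of $\pi'\times\delta([\nu^{-c'}\rho,\nu^{d'}\rho])$ must agree with the $GL$-cuspidal support of one of the summands $L(\delta([\nu^{-i}\rho,\nu^{c}\rho]),\delta([\nu^{i+1}\rho,\nu^{d}\rho]))$ for some $\a\le i\le d$; indeed the assumed formula for $s_{GL}$ means that $\delta([\nu^{-i}\rho,\nu^{c}\rho])\times\delta([\nu^{i+1}\rho,\nu^{d}\rho])\o\s$ appears in $\mu^*(L(\delta([\nu^{-c}\rho,\nu^{d}\rho]);\s))$, and hence, composing with the further Jacquet module that isolates $\delta([\nu^{-c'}\rho,\nu^{d'}\rho])$, the representation $\delta([\nu^{-c'}\rho,\nu^{d'}\rho])\o\pi''$ with $\pi''\ne 0$ must appear in $\mu^*(\delta([\nu^{-i}\rho,\nu^{c}\rho])\times\delta([\nu^{i+1}\rho,\nu^{d}\rho]))$ after swapping tensor factors appropriately.

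The core of the argument is then a purely $GL$-side computation: one must show that for $i$ in the range $\a\le i\le d$, the (multiset of) Jacquet modules of $\delta([\nu^{-i}\rho,\nu^{c}\rho])\times\delta([\nu^{i+1}\rho,\nu^{d}\rho])$ (equivalently, by Langlands, of $L(\delta([\nu^{-i}\rho,\nu^{c}\rho]),\delta([\nu^{i+1}\rho,\nu^{d}\rho]))$, since the other subquotients of the product do not occur in $\mu^*(L(\ldots))$) never contain a constituent of the form $\delta([\nu^{-c'}\rho,\nu^{d'}\rho])\o\pi''$ with $-c\le -c'\le-\a\le\a\le d'\le d$. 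Using the Bernstein--Zelevinsky formula for $m^*$ of a product of two essentially square integrable representations, any irreducible constituent sitting on the left of such a Jacquet module is of Langlands-quotient form built from sub-segments of $[\nu^{-i}\rho,\nu^{c}\rho]$ and $[\nu^{i+1}\rho,\nu^{d}\rho]$; the exponents appearing in $[\nu^{i+1}\rho,\nu^{d}\rho]$ are all $\ge i+1\ge\a+1$, which is strictly larger than every exponent $\le d'$ contributed by $\delta([\nu^{-c'}\rho,\nu^{d'}\rho])$ could plausibly match if it were a \emph{single} segment containing both $-\a$ and $\a$ in its interior. The key point is that a single irreducible segment representation $\delta([\nu^{-c'}\rho,\nu^{d'}\rho])$ with $-c'\le-\a$ and $d'\ge\a$ cannot be obtained as a constituent of the left factor of $m^*$ applied to a product of two segments one of which lies entirely in $[\nu^{\a+1}\rho,\nu^d\rho]$ and the other in $[\nu^{-i}\rho,\nu^c\rho]$ where $-i\le-\a$: the only way to produce a segment covering the gap around $0$ is to splice, but a Jacquet-module constituent of $\delta(\Delta_1)\times\delta(\Delta_2)$ decomposes along the two segments and cannot be a single longer segment unless $\Delta_1,\Delta_2$ are linked in a way the exponent ranges here forbid. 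I would make this precise by comparing exponents: the segment $[\nu^{-c'}\rho,\nu^{d'}\rho]$ has length $c'+d'+1$, and matching its cuspidal support against sub-supports of $[\nu^{-i}\rho,\nu^c\rho]\sqcup[\nu^{i+1}\rho,\nu^d\rho]$ forces $d'$ to equal either $c$ or some value $\le i$, contradicting $d'\ge\a$ together with $i\ge\a$, or forces the support to be disconnected, contradicting that $\delta([\nu^{-c'}\rho,\nu^{d'}\rho])$ is attached to a single segment.

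The main obstacle I anticipate is the bookkeeping in the second paragraph: one has to be careful that a Jacquet-module constituent of a product of two \emph{irreducible} essentially square integrable representations need not have the $GL$-part literally equal to a product of two sub-segments — after taking the Jacquet module and re-grouping, the left-hand factor could a priori be a more complicated irreducible representation. To handle this cleanly I would first reduce to the cuspidal-support level (working in the Grothendieck group $R$, comparing supports as multisets of $\nu^k\rho$), which already kills most cases by the exponent comparison above, and only in the borderline cases $d'=c$ or $-c'=-c$ use the finer structure of $m^*$ (namely the Bernstein--Zelevinsky description of which segments can appear) to conclude. Throughout, the hypotheses $-c\le-c'\le-\a$ and $\a\le d'\le d$ are what guarantee the segment $[\nu^{-c'}\rho,\nu^{d'}\rho]$ straddles $0$ and is long enough that it cannot be matched; the case $c'=d'=\a$ or similarly small would be the only potential exception, but there the displayed $s_{GL}$ formula has no summand with such a short segment on the right, so it is excluded as well.
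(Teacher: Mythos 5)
Your overall architecture matches the paper's: argue by contradiction, use transitivity through the $GL$-type parabolic together with the assumed $s_{GL}$ formula, and reduce to showing that $\pi'\otimes\delta([\nu^{-c'}\rho,\nu^{d'}\rho])$ cannot occur in $m^*$ of any summand $L(\delta([\nu^{-i}\rho,\nu^c\rho]),\delta([\nu^{i+1}\rho,\nu^d\rho]))$. But the $GL$-side half of the argument, as you set it up, does not close.

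The key issue is that you compute with $m^*$ of the \emph{product} $\delta([\nu^{-i}\rho,\nu^c\rho])\times\delta([\nu^{i+1}\rho,\nu^d\rho])$ (via the Bernstein--Zelevinsky formula for $m^*$ of segments), and this is too coarse. The right tensor factor there is $\delta([\nu^{-i}\rho,\nu^{e_1}\rho])\times\delta([\nu^{i+1}\rho,\nu^{e_2}\rho])$, and for the choice $i=c'$, $e_1=c'$, $e_2=d'$ this is $\delta([\nu^{-c'}\rho,\nu^{c'}\rho])\times\delta([\nu^{c'+1}\rho,\nu^{d'}\rho])$ --- two juxtaposed segments, which really do contain $\delta([\nu^{-c'}\rho,\nu^{d'}\rho])$ as a subquotient. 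So on the level of $m^*$ of the product you cannot rule out the offending term, and your cuspidal-support comparison does not detect this either (the supports match perfectly). Your claim that matching supports ``forces $d'$ to equal either $c$ or some value $\leq i$'' is not correct: the genuine possibility, and the only one that survives the support comparison, is precisely $i=c'$, $d'\leq c$, which is not a borderline case in your sense (neither $d'=c$ nor $-c'=-c$ need hold).

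What is actually needed, and what the paper uses, is the Kret--Lapid description of $m^*$ of a \emph{ladder} (Langlands) representation, applied to $L(\delta([\nu^{-i'}\rho,\nu^c\rho]),\delta([\nu^{i'+1}\rho,\nu^d\rho]))$ rather than to the product. There the right tensor factor is an irreducible ladder $L(\delta([\nu^{-i'}\rho,\nu^{e_1}\rho]),\delta([\nu^{i'+1}\rho,\nu^{e_2}\rho]))$ with the strict inequality $e_1<e_2$. For such a ladder to collapse to a single-segment representation $\delta([\nu^{-c'}\rho,\nu^{d'}\rho])$, one of the two cut segments must be empty, which forces the left endpoint $-c'$ to equal $-i'$ or $i'+1$. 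Since $-c'\leq 0<i'+1$, the only option is $-c'=-i'$, i.e.\ $i'=c'$; then $d'=e_1<e_2=i'=c'$ by the ladder inequality, contradicting $c'\leq d'$. You gesture at this issue in your last paragraph (``the left-hand factor could a priori be a more complicated irreducible representation'') but the proposed fix --- dropping to cuspidal support and handling only $d'=c$ or $-c'=-c$ by finer $m^*$ analysis --- does not address it. You should instead invoke the ladder Jacquet-module formula directly for $L$, extract the endpoint constraint $-c'\in\{-i',i'+1\}$, and then use the ladder inequality to derive the contradiction.
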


\begin{proof}  Suppose that the above term is contained in  Jacquet module of $L(\delta([\nu^{-c} \rho,\nu^{d}\rho]);\s)$. Obviously, then $c<d$ (otherwise $L(\delta([\nu^{-c} \rho,\nu^{d}\rho]);\s)=0$). Now the assumption on $s_{GL}(L(\delta([\nu^{-c}\rho,\nu^{d}\rho])$;$\s))$ implies that the  representation $\pi'\o\delta([\nu^{-c'}\rho,\nu^{d'}\rho])\o\s$ appears in  the Jacquet module of
\begin{equation*}
L(\delta([\nu^{-i'}\rho,\nu^{c}\rho]) ,
\delta([\nu^{i'+1} \rho,\nu^{d}\rho]))\o\s,  \quad \text{for some} \quad \a\leq i'\leq d.
\end{equation*}
Using the formula for $m^*$ we see that, to be able to get $\pi' \o \delta([\nu^{-c'} \rho,\nu^{d'}\rho])\o\s$ in Jacquet module of the above representation, we must have $-c'=i'+1$ or $-c'=-i'$. Obviously, the first relation cannot hold (because of different signs). Therefore, $i'=c'$  and it remains to consider $m^*(L(\delta([\nu^{-c'}\rho,\nu^{c}\rho]) $,
$\delta([\nu^{c'+1} \rho,\nu^{d}\rho]))).$ But, by \cite{KL},
\begin{equation*}
m^*(L(\delta([\nu^{-c'}\rho,\nu^{c}\rho]) , \delta([\nu^{c'+1} \rho,\nu^{d}\rho])))
\end{equation*}
does not contain a term of the form $\pi' \o \delta([\nu^{-c'} \rho,\nu^{d'}\rho])$ with $d' \geq c'$.

This proves that the (non-zero) terms of the form $ \ldots\o \delta([\nu^{-c'} \rho,\nu^{d'}\rho]\o \s$ can not be in the Jacquet module of Langlands, proving the lemma.
\end{proof}

The following relations have been obtained in \cite{T-seg}:
\begin{align*}
\sum_{i=-c-1}^{-\alpha-1}
\delta([\nu^{i+1}\rho,\nu^{d}\rho])\times
\delta([\nu^{-i}\rho,\nu^{c}\rho]) \otimes \sigma & \\
& \leq
s_{GL}(\delta([\nu^{-c}\rho,\nu^{d}\rho]_+,
\sigma)) \\
& \leq
  \sum_{i=-c-1}^{\alpha-1}
\delta([\nu^{i+1}\rho,\nu^{d}\rho])\times
\delta([\nu^{-i}\rho,\nu^{c}\rho]) \otimes \sigma
\end{align*}
and
\begin{equation*}
s_{GL}(\delta([\nu^{-c}\rho,\nu^{d}\rho]_-,
\sigma))
=
\sum_{i=-c-1}^{-\a-1}
\delta([\nu^{i+1}\rho,\nu^{d}\rho])\times
\delta([\nu^{-i}\rho,\nu^{c}\rho]) \otimes \sigma.
\end{equation*}

The following theorem will give us further details about the Jacquet modules of irreducible subquotients of $\delta([\nu^{-c} \rho,\nu^{d}\rho])\r\s$.

To keep the notation of our results uniform, we continue with our convention regarding meaning of $\delta([\nu^{-c}\rho,\nu^{d}\rho]_+;\s)$ and $L_\a(\delta([\nu^{-c}\rho,\nu^{d}\rho]);\s)$ when $\delta([\nu^{-c}\rho,\nu^{d}\rho])\rtimes \s$ is irreducible.

\begin{theorem} \label{teoremdrugi} Let $c,d\in(1/2)\Z$ such that $d+c,d-c\in\Z_{\geq 0}$, $d-\a\in\Z$ and
$$
-c\leq-\a< \a\leq d.
$$
Then
\begin{enumerate}

\item[$(B1)$] If $c<d$ then $\delta([\nu^{-c}\rho,\nu^{d}\rho])\r\s$ is a representation of length three, and the composition series consists of two subrepresentations: $\delta([\nu^{-c}\rho,\nu^{d}\rho]_\pm;\s)$ and the Langlands quotient $L(\delta([\nu^{-c}\rho,\nu^{d}\rho]);\s)$. \\ For $c=d$,  $\delta([\nu^{-c}\rho,\nu^{c}\rho])\r\s$ is a representation of length two, and the composition series consists of $\delta([\nu^{-c}\rho,\nu^{c}\rho]_\pm;\s)$, which are both subrepresentations.

\item[$(B2)$] If $c<d$ then $\delta([\nu^{-c}\rho,\nu^{d}\rho]_\pm;\s)$ are square integrable. For $c=d$, the representations are tempered, but not square integrable.

\item[$(B3)$] $
s_{GL}(\delta([\nu^{-c}\rho,\nu^{d}\rho]_\pm;\s))=\sum_{i=-c-1}^{\pm\a-1}
\delta([\nu^{-i}\rho,\nu^{c}\rho]) \t \delta([\nu^{i+1}\rho,\nu^{d}\rho])\o\s.
$

\item[$(B4)$] For $c<d$

 $
s_{GL}(L(\delta([\nu^{-c}\rho,\nu^{d}\rho]);\s))
=
\sum_{i=\a}^{d}
L(\delta([\nu^{-i}\rho,\nu^{c}\rho]) , \delta([\nu^{i+1}\rho,\nu^{d}\rho]))\o\s.
$

\item[$(B5)$] $s_{top} (\delta([\nu^{-c}\rho,\nu^{c}\rho]_+;\s)) =  2\, \nu^{c}\rho\o\delta([\nu^{-c+1}\rho,\nu^{c}\rho]_+;\s) + \nu^{c}\rho\o L_\a(\delta([\nu^{-c+1}\rho,\nu^{
c}\rho]);\s),$
$s_{top} (\delta([\nu^{-c}\rho,\nu^{c}\rho]_-;\s)) = 2\, \nu^{c}\rho\o\delta([\nu^{-c+1}\rho,\nu^{c}\rho]_-;\s) +  \nu^{c}\rho\o L_\a(\delta([\nu^{-c+1}\rho,\nu^{c}\rho]);\s).$

\item[$(B6)$] For $c<d$
\begin{align*}
s_{top} (\delta([\nu^{-c}\rho,\nu^{d}\rho]_\pm;\s)) & = \nu^{d}\rho\o\delta([\nu^{-c}\rho,\nu^{d-1}\rho]_\pm;\s) +{} \\
& \qquad + \nu^{c}\rho\o\delta([\nu^{-c+1}\rho,\nu^{d}\rho]_\pm;\s).
\end{align*}

\item[$(B7)$] For $c<d$
\begin{align*}
s_{top}(L(\delta([\nu^{-c}\rho,\nu^{d}\rho]);\s)) & = \nu^{d}\rho\o L_{\a}(\delta([\nu^{-c}\rho,\nu^{d-1}\rho]);\s) +{} \\
& \qquad + \nu^{c}\rho\o L_{\a}(\delta([\nu^{-c+1}\rho,\nu^{d}\rho]);\s).
\end{align*}

\end{enumerate}
\end{theorem}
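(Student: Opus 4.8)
The proof of Theorem \ref{teoremdrugi} will follow the same general strategy as that of Theorem \ref{tmprvi}: determine the $GL$-type Jacquet module first, then use it together with the structural formula (Lemma \ref{osn}) to pin down the composition series and the top Jacquet modules by an inductive argument. The main new feature here is that the induced representation has length three (when $c<d$), so we have three irreducible subquotients to separate rather than two; the bookkeeping is heavier but the mechanism is the same.

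\emph{Step 1: Base case $c=d$.} I would start from the facts recalled from \cite{T-seg} at the beginning of this section: $\delta([\nu^{-c}\rho,\nu^{c}\rho])\rtimes\s$ is unitarizable of length at most two, each constituent is a subrepresentation containing $\delta([\nu^{-c}\rho,\nu^{c}\rho])\o\s$, and the displayed inequalities bounding $s_{GL}(\delta([\nu^{-c}\rho,\nu^{c}\rho]_\pm;\s))$ together with the exact formula for $s_{GL}(\delta([\nu^{-c}\rho,\nu^{c}\rho]_-;\s))$. Since the upper bound for ${}_+$ minus the value for ${}_-$ accounts for exactly the remaining constituents of $s_{GL}(\delta([\nu^{-c}\rho,\nu^{c}\rho])\rtimes\s)$ (which one computes directly from $m^*$), one gets $(B3)$ for $c=d$ and hence length exactly two, proving $(B1)$ in this case; temperedness but non-square-integrability in $(B2)$ follows because the cuspidal support on the $GL$-side is ``balanced'' (contains $\nu^{-c}\rho$ with the same multiplicity as $\nu^{c}\rho$), so Casselman's criterion fails. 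Then $(B5)$ is obtained by applying $m^*$ to $\delta([\nu^{-c}\rho,\nu^{c}\rho])\rtimes\s$, reading off $s_{top}$, and using the known embedding $\delta([\nu^{-c}\rho,\nu^{c}\rho]_\pm;\s)\hookrightarrow \nu^c\rho\times\delta([\nu^{-c+1}\rho,\nu^{c}\rho])\rtimes\s$ (as in the proof of Theorem \ref{tmprvi}) to assign each summand to the correct constituent; the $L_\a$ term must land in whichever constituent is forced by Lemma \ref{lematehn} type reasoning (here actually by the fact that $L_\a(\delta([\nu^{-c+1}\rho,\nu^c\rho]);\s)$ is a proper Langlands quotient appearing in both, so one checks multiplicities).

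\emph{Step 2: Double induction for $c<d$.} Fix $c<d$ and induct on $d$, with an inner induction on $c$ (decreasing from $d$ down to $\a$, i.e.\ the base of the inner induction is $c=d$ from Step 1). As in Theorem \ref{tmprvi}, apply $m^*$ to get
\begin{align*}
s_{top}(\delta([\nu^{-c}\rho,\nu^{d}\rho])\rtimes\s) &= \nu^d\rho\o\delta([\nu^{-c}\rho,\nu^{d-1}\rho])\rtimes\s \\
&\quad + \nu^c\rho\o\delta([\nu^{-c+1}\rho,\nu^{d}\rho])\rtimes\s,
\end{align*}
and use the inductive hypotheses to decompose each piece into ${}_+$, ${}_-$ and $L_\a$ parts. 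The embeddings $\delta([\nu^{-c}\rho,\nu^{d}\rho]_\pm;\s)\hookrightarrow\nu^c\rho\times\delta([\nu^{-c+1}\rho,\nu^{d}\rho])\rtimes\s$ and the analogous one extracting $\nu^d\rho$, combined with Frobenius reciprocity and transitivity, show that the ``$\pm$'' summands of $s_{top}$ on the right feed into $\delta([\nu^{-c}\rho,\nu^{d}\rho]_\pm;\s)$ respectively and the $L_\a$ summands into the Langlands quotient; this proves $(B6)$, $(B7)$, and — by counting lengths — $(B1)$. To get $(B3)$ and $(B4)$ I would first establish the easy inclusion $\sum_{i=\a}^{d}L(\delta([\nu^{-i}\rho,\nu^{c}\rho]),\delta([\nu^{i+1}\rho,\nu^{d}\rho]))\o\s \leq s_{GL}(L(\delta([\nu^{-c}\rho,\nu^{d}\rho]);\s))$ via the chain of embeddings $L(\delta([\nu^{-c}\rho,\nu^{d}\rho]);\s)\hookrightarrow \delta([\nu^{-j+1}\rho,\nu^{c}\rho])\times\delta([\nu^{j}\rho,\nu^{d}\rho])\rtimes\s$ (using irreducibility of $\delta([\nu^j\rho,\nu^d\rho])\rtimes\s$ from \cite{T-irr}, valid since $j>\a$), exactly as in Theorem \ref{tmprvi}; then the known bounds on $s_{GL}$ of the ${}_\pm$-constituents from \cite{T-seg} force equalities everywhere by $GL$-support counting against $s_{GL}(\delta([\nu^{-c}\rho,\nu^{d}\rho])\rtimes\s)$. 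Square-integrability in $(B2)$ then follows from $(B3)$: the maximal exponents appearing are $\nu^c\rho$ on the ${}_-$-side and $\nu^{\a-1}\rho\cdots$ on the ${}_+$-side, and in both cases the $GL$-support of every constituent of the minimal Jacquet module is strictly positive after the appropriate partial sums, so Casselman's criterion applies — this uses $c<d$ crucially (for $c=d$ the support is balanced).

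\emph{Main obstacle.} The delicate point, as in the proof of the previous theorem, is the separation of the ${}_+$ and ${}_-$ constituents: both have $\delta([\nu^{-c}\rho,\nu^{d}\rho])\o\s$ in their Jacquet module, and the $GL$-type Jacquet modules differ only in the ``middle'' range $-\a-1\le i\le\a-1$. Assigning each summand of $s_{top}(\delta([\nu^{-c}\rho,\nu^{d}\rho])\rtimes\s)$ to the right one of the two requires carefully tracking which constituents have ``all non-negative exponents'' in their minimal standard Jacquet module (the characterizations $(S2)$, $(S3)$), and then propagating this through the induction; in particular one must verify that $\delta([\nu^{-c+1}\rho,\nu^{d}\rho]_-;\s)$ and $\delta([\nu^{-c}\rho,\nu^{d-1}\rho]_-;\s)$ are nonzero (which holds since $c-1<d$ and $c<d-1$ keep us in the length-three regime, or reduce to the $c=d$ boundary handled in Step 1) so that the claimed $s_{top}$ formulas make sense. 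The reasoning of Lemma \ref{lematehn}, which forbids balanced segments $\delta([\nu^{-c'}\rho,\nu^{d'}\rho])$ from appearing in Jacquet modules of Langlands quotients, is the tool that keeps the $L_\a$-terms out of the square-integrable constituents and vice versa.
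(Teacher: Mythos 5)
Your overall architecture (interlaced induction, assign the summands of $s_{top}$ of the induced representation to the three constituents, then deduce the $GL$-type Jacquet modules) is the paper's strategy, and your treatment of $(B1)$, $(B5)$--$(B7)$ and of the lower bound for the Langlands quotient matches the paper. But there are two genuine gaps. First, your inner induction runs in the wrong direction: to decompose $\nu^{c}\rho\o\delta([\nu^{-c+1}\rho,\nu^{d}\rho])\r\s$ you need the theorem for the pair $(c-1,d)$, which in your scheme (decreasing $c$ from $d$ down to $\a$) has not yet been established. The induction must go upward in $c$, starting at $c=\a$, where the case $(c-1,d)=(\a-1,d)$ is supplied by Theorem \ref{tmprvi} (and where, contrary to your claim, $\delta([\nu^{-c+1}\rho,\nu^{d}\rho]_-;\s)=0$ by the convention; the $s_{top}$ formulas survive only because that term is allowed to vanish).

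Second, and more seriously, the counting step you use to close $(B3)$ and $(B4)$ does not work. What is available is an \emph{upper} bound $s_{GL}(\delta([\nu^{-c}\rho,\nu^{d}\rho]_+;\s))\le\sum_{i=-c-1}^{\a-1}(\cdots)$ from \cite{T-seg}, the exact value for the minus constituent, and a \emph{lower} bound for the Langlands quotient from the embedding chain. Writing $s_{GL}(+)=U_+-x$ and $s_{GL}(L)=B_L+y$ with $x,y\ge0$, the identity $U_++B_-+B_L=s_{GL}(\delta([\nu^{-c}\rho,\nu^{d}\rho])\r\s)$ only yields $x=y$; it does not force $x=y=0$, since a deficit in the plus constituent can be exactly compensated by an excess in the Langlands quotient (the ``middle'' terms $-\a\le i\le\a-1$ are not pinned down by cuspidal support alone). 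The paper closes this by an additional argument you omit: apply the Jacquet functor $r_{(p,(d+c)p)}$ to the upper bound for $s_{GL}(\delta([\nu^{-c}\rho,\nu^{d}\rho]_+;\s))$ and, via transitivity, to the already-proved formula $(B6)$, and verify that the two computations agree; this is exactly the device used in the last case of the proof of Theorem \ref{tmprvi}, and it is the step that actually proves $(B3)$ (and hence $(B4)$ by subtraction). Finally, your derivation of $(B2)$ from $(B3)$ via Casselman's criterion is not needed ($(B2)$ is quoted from \cite{T-seg}) and as phrased is unconvincing: the supports occurring in $s_{GL}(\delta([\nu^{-c}\rho,\nu^{d}\rho]_+;\s))$ do contain negative exponents (down to $\nu^{-\a+1}\rho$), so one must check positivity of partial sums, not of individual exponents.
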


\begin{proof}
We emphasize that it is easy to see, using formula for $\mu^{\ast}$ and well-known composition series of induced representations of general linear groups, that the sum of the two sums on the right-hand side of $(B3)$ and the sum on  the right-hand side of $(B4)$ equals $s_{GL}(\delta([\nu^{-c} \rho,\nu^{d}\rho])\r\s$).

Observe that the statements in $(B2)$ regarding square integrability and temperedness of involved representations follow from \cite{T-seg}. From there we also know the length two claim and the statement $(B3)$ in the tempered case. Furthermore, we know that claims $(B3)$ and $(B6)$ hold for $\delta([\nu^{-c}\rho,\nu^{d}\rho]_-;\s)$. Also, the length three claim in $(B1)$ now follows from Theorem 2.1 of \cite{Mu-CSSP}, while the length two claim in $(B1)$ is an integral part of classification of discrete series (\cite{Moe-T}).

We shall prove the rest of the theorem by interlaced inductions on $c$ and $d$.

The formula for $\mu^*$ implies
\begin{equation*}
s_{top} (\delta([\nu^{-c}\rho,\nu^{d}\rho])\r\s)=
\nu^{d}\rho\o\delta([\nu^{-c}\rho,\nu^{d-1}\rho])\r\s + \nu^{c}\rho\o\delta([\nu^{-c+1}\rho,\nu^{d}\rho])\r\s.
\end{equation*}

First we shall prove the theorem in case $c=\a$. The proof goes by induction on $d$.

First consider the case $d=\a$. We know that in this case $\delta([\nu^{-\a}\rho,\nu^{\a}\rho])\r\s$ is of length two. The above formula and the previous theorem, together with symmetrization of notation, imply that
\begin{equation*}
s_{top} (\delta([\nu^{-\a}\rho,\nu^{\a}\rho])\r\s)= 2\, \nu^{\a}\rho\o\delta([\nu^{-\a+1}\rho,\nu^{\a}\rho]_+;\s) + 2\, \nu^{\a}\rho\o L(\delta([\nu^{-\a+1}\rho,\nu^{\a}\rho]);\s)
\end{equation*}
is the decomposition of $s_{top} (\delta([\nu^{-\a}\rho,\nu^{\a}\rho])\r\s)$ into irreducible representations. From the definition of $\delta([\nu^{-\a}\rho,\nu^{\a}\rho]_+;\s)$ we know that $2\, \nu^{\a}\rho\o\delta([\nu^{-\a+1}\rho,\nu^{\a}\rho]_+;$ $\s)$ must belong to its Jacquet module. On the other hand, it follows from $(B3)$, which, as we know, does hold in this case, that the minimal non-zero Jacquet module of $\delta([\nu^{-\a}\rho,\nu^{\a}\rho]_-;\s)$ is irreducible. Now from these two facts it follows that
\begin{align*}
s_{top} (\delta([\nu^{-\a}\rho,\nu^{\a}\rho]_+;\s)) &=
2\nu^{\a}\rho\o\delta([\nu^{-\a+1}\rho,\nu^{\a}\rho]_+;\s) + \nu^{\a}\rho\o L(\delta([\nu^{-\a+1}\rho,\nu^{\a}\rho]);\s), \\
s_{top} (\delta([\nu^{-\a}\rho,\nu^{\a}\rho]_-;\s)) &=  \nu^{\a}\rho\o L(\delta([\nu^{-\a+1}\rho,\nu^{\a}\rho]);\s).
\end{align*}
Since $\delta([\nu^{-\a+1}\rho,\nu^{\a}\rho]_-;\s)=0$, this is exactly $(B5)$ in this case, and it implies that the theorem holds for $c = d = \a$ (note that $(B6)$ and $(B7)$ do not apply here).

Fix $d>\a$ and assume that the theorem holds for $d-1$. Then the inductive assumption implies the following decomposition into irreducible representations:
\begin{multline*}
s_{top} (\delta([\nu^{-\a}\rho,\nu^{d}\rho])\r\s)
=\nu^{d}\rho\o\delta([\nu^{-\a}\rho,\nu^{d-1}\rho]_+;\s) + \nu^{\a}\rho\o\delta([\nu^{-\a+1}\rho,\nu^{d}\rho]_+;\s) \\
+\nu^{d}\rho\o\delta([\nu^{-\a}\rho,\nu^{d-1}\rho]_-;\s)
+\nu^{d}\rho\o L(\delta([\nu^{-\a}\rho,\nu^{d-1}\rho]);\s) + \nu^{\a}\rho\o L(\delta([\nu^{-\a+1}\rho,\nu^{d}\rho]);\s)
\end{multline*}
(note that, according to our notation, in this case $\nu^{\a}\rho\o\delta([\nu^{-\a+1}\rho,\nu^{d}\rho]_-;\s)=0$).
The first two terms obviously belong to the Jacquet module of $\delta([\nu^{-\a}\rho,\nu^{d}\rho]_+;\s)$.
Since the representation $\delta([\nu^{-\a}\rho,\nu^{d}\rho]_-;\s)$ has in its minimal non-zero Jacquet module the term $\nu^d\rho\o\nu^{d-1}\rho\o\dots\o\nu^{-\a}\rho\o\s$, which cannot come from the last two terms, the third term must come from  $\delta([\nu^{-\a}\rho,\nu^{d}\rho]_-;\s)$.

The properties of Langlands classification imply that in the Jacquet module of Langlands quotient $L(\delta([\nu^{-\a}\rho,\nu^{d}\rho]);\s)$ we must have a term of the form $\nu^{\a}\rho\o\dots$ Therefore, the last term must be in this Jacquet module. If $d=\a+1$, the fourth term is zero. If not, then the Langlands quotient embeds into
\begin{align*}
\delta([\nu^{-d}\rho,\nu^{\a}\rho])\r\s \h \nu^{\a}\rho \t \cdots \t \nu^{-d}\rho\r\s & \cong  \nu^{\a}\rho \t \cdots \t \nu^{d}\rho\r\s \\
& \cong \nu^{d}\rho\t   \nu^{\a}\rho \t \cdots \t \nu^{-d+1}\rho\r\s.
\end{align*}
Consequently, Jacquet module of the Langlands quotient has to contain a representation of the form $\nu^{d}\rho\o\ldots $. Since $\a+1\ne d$, we see that the fourth term must be in the Jacquet module of Langlands quotient. This proves, in this case, the claimed formulas for the top Jacquet modules.

We have already seen that the right-hand side of $(B3)$ presents an upper bound for $s_{GL}(\delta([\nu^{-\a}\rho,\nu^{d}\rho]_+;\s))$.
Applying Jacquet modules $r_{(p,(d+c)p)}$ to this upper bound and on $(B6)$ for $\delta([\nu^{-\a}\rho,\nu^{d}\rho]_+;\s)$ (which we have just proved), in the same way as in the proof of Theorem \ref{tmprvi}, we get an equality which proves $(B3)$. But this also implies $(B4)$, because we have
\begin{equation*}
s_{GL}(\delta([\nu^{-c}\rho,\nu^{d}\rho])\r\s) =
\sum_{i=-c-1}^{d}
\delta([\nu^{-i}\rho,\nu^{c}\rho]) \t \delta([\nu^{i+1}\rho,\nu^{d}\rho])\o\s,
\end{equation*}
since $\delta([\nu^{-c}\rho,\nu^{d}\rho])\r\s$ is a representation of length three, and we know Jacquet modules of the remaining two irreducible subquotients.

This completes the proof for $c=\a$.

Now we shall fix $c>\a$ and assume that the theorem holds for $c-1$. Again, we proceed inductively, similarly as in case $c=\a$.

Let us start with the case $d=c$.
We know that in this case $\delta([\nu^{-c}\rho,\nu^{c}\rho])\r\s$ is of length two. The above formula and symmetrization of notation imply that
\begin{multline*}
s_{top} (\delta([\nu^{-c}\rho,\nu^{c}\rho])\r\s)= 2\, \nu^{c}\rho\o\delta([\nu^{-c+1}\rho,\nu^{c}\rho]_+;\s) \\
+2\,\nu^{c}\rho\o\delta([\nu^{-c+1}\rho,\nu^{c}\rho]_-;\s) + 2\, \nu^{c}\rho\o L(\delta([\nu^{-c+1}\rho,\nu^{c}\rho]);\s)
\end{multline*}
is a decomposition of $s_{top} (\delta([\nu^{-c}\rho,\nu^{c}\rho])\r\s)$ into irreducible representations. Definition of the representation $\delta([\nu^{-c}\rho,\nu^{c}\rho]_+;\s)$ directly implies that the irreducible representation $2\, \nu^{c}\rho\o\delta([\nu^{-c+1}\rho, \nu^{c}\rho]_+;\s)$ belongs to its Jacquet module. Furthermore, since $\delta([\nu^{-c}\rho,\nu^{c}\rho])\o\s$ is in the Jacquet module of both $\delta([\nu^{-c}\rho,\nu^{c}\rho]_+;\s)$ and $\delta([\nu^{-c}\rho,\nu^{c}\rho]_-;\s)$, using transitivity of Jacquet modules and the fact that $\nu^{-c}\rho$ does not appear in the cuspidal support of $s_{GL}(\delta([\nu^{-c+1}\rho,\nu^{c}\rho]_+;\s))$, we obtain that $ \nu^{c}\rho\o L(\delta([\nu^{-c+1}\rho,\nu^{c}\rho]);\s)$ must be in both Jacquet modules. From the formula for $s_{GL}$ of  $\delta([\nu^{-c}\rho,\nu^{c}\rho]_\pm;\s)$ it follows directly that the multiplicity of $\nu^{c}\rho \o\nu^{c}\rho \o \nu^{c-1}\rho \o \dots \o \nu^{-c+1}\rho\o\s$ is the same for both representations (since $c>\a$). Observe that $\nu^{c}\rho \o\nu^{c}\rho \o \nu^{c-1}\rho \o \dots \o \nu^{-c+1}\rho\o\s$ has positive multiplicity in $\nu^{c}\rho\o\delta([\nu^{-c+1}\rho,\nu^{c}\rho]_-;\s) $ and multiplicity zero in $\nu^c\rho\o L(\delta([\nu^{-c+1}\rho,\nu^{c}\rho]);\s)$. It follows immediately that $2\, \nu^{c}\rho\o\delta([\nu^{-c+1}\rho,\nu^{c}\rho]_-;\s) $ appears in the Jacquet module of $\delta([\nu^{-c}\rho,\nu^{c}\rho]_-;\s)$. This completes the proof of the theorem in this case.

Let us now fix $d>c$ and assume that the theorem holds for $d-1$ and $c$, and also for $c-1$ and all $d\geq c-1$. The inductive assumptions give the following decomposition into irreducible representations
\begin{multline*}
s_{top} (\delta([\nu^{-c}\rho,\nu^{d}\rho])\r\s)= \nu^{d}\rho\o\delta([\nu^{-c}\rho,\nu^{d-1}\rho]_+;\s) + \nu^{c}\rho\o\delta([\nu^{-c+1}\rho,\nu^{d}\rho]_+;\s) \\
+\nu^{d}\rho\o\delta([\nu^{-c}\rho,\nu^{d-1}\rho]_-;\s) + \nu^{c}\rho\o\delta([\nu^{-c+1}\rho,\nu^{d}\rho]_-;\s) \\
+\nu^{d}\rho\o L(\delta([\nu^{-c}\rho,\nu^{d-1}\rho]);\s) + \nu^{c}\rho\o L(\delta([\nu^{-c+1}\rho,\nu^{d}\rho]);\s).
\end{multline*}
In the same way as in the case $c = \a$ we deduce that the first two terms belong to the Jacquet module of $\delta([\nu^{-c}\rho,\nu^{d}\rho]_+;\s)$, while
$s_{top}(\delta([\nu^{-\a}\rho,\nu^{d}\rho]_-;\s))$ contains the third one. Also, since $\nu^{-d}\rho$ appears only in the last term, that term belongs to Jacquet module of Langlands quotient.

If $d=c+1$, then the fifth  representation is zero. If not, i.e., if $d>c+1$, in the same way as before we see that the fifth representation is also in the Jacquet module of the Langlands quotient.

Observe that the Jacquet module of $GL$-type of $\delta([\nu^{-c}\rho,\nu^{d}\rho]_-;\s)$ contains $\delta([\nu^{-c+1}\rho,\nu^{d}\rho])\t\nu^c\rho\o\s$. This implies that in the Jacquet module of this representations we must have subquotients of the form $\nu^c\rho\o\dots$. Therefore, the fourth representation is in the Jacquet module of $\delta([\nu^{-\a}\rho,\nu^{d}\rho]_-;\s)$. This proves the formulas for the top Jacquet modules as claimed in the theorem.

In completely analogous manner as in the case $c = \a$, we prove $(B3)$ for representation $\delta([\nu^{-\a}\rho,\nu^{d}\rho]_+;\s)$ by showing equality on the level of Jacquet modules $r_{(p,(d+c)p)}$ (applying this Jacquet modules to the right-hand side of $(B3)$, which we know is an upper bound for $s_{GL}(\delta([\nu^{-\a}\rho,\nu^{d}\rho]_+;\s))$, and on $(B6)$). This also implies $(B4)$ and completes the proof of Theorem \ref{teoremdrugi}.
\end{proof}

We take a moment to provide an interpretation of the results obtained in the previous theorem in terms of admissible triples.

\begin{remark}
If $-c\leq-\a< \a\leq d$ and $\delta([\nu^{-c}\rho,\nu^{d}\rho]_\pm;\s)$ are square integrable (i.e., $c < d$) then $\Jord(\delta([\nu^{-c}\rho,\nu^{d}\rho]_\pm;\s)) = \Jord(\s) \cup \{ (2c+1, \rho), (2d+1, \rho) \}$. In addition, if we denote the $\e$-function corresponding to $\delta([\nu^{-c} \rho,\nu^{d}\rho]_+;\s)$ (resp., $\delta([\nu^{-c}\rho,\nu^{d}\rho]_-;\s)$) by $\e_+$ (resp., $\e_-$), then we obviously have $d\_ = c$ and
$ \e_\pm ((c,\rho), (d, \rho))= 1$. Furthermore, by $(B3)$ we have $\e_+ ((c\_, \rho), (c, \rho))= 1$, and $\e_- ((c\_, \rho), (c, \rho))= -1$ if $c\_$ exists and $\e_{\pm}(c, \rho) = \pm 1$ otherwise.
\end{remark}

In the following corollary we determine all Jacquet modules for representations of segment type in the cases considered.

\begin{corollary} \label{korprvi} Let $c,d\in(1/2)\Z$ be such that $d+c,d-c\in\Z_{\geq 0}$, $d-\a\in\Z$ and $-c\leq-\a< \a\leq d$. Then
\begin{align*}
\mu^*&\left(\delta([\nu^{-c}\rho,\nu^{d}\rho]_\pm;\s)\right) 
\\
& = \sum_{i = -c-1}^{d-1%
} \sum_{j = i+1}^{d}
\delta([\nu^{-i}\rho,\nu^{c}\rho])
 \times
\delta([\nu^{j+1} \rho,\nu^{d}\rho]) \otimes
\delta([\nu^{i+1} \rho,\nu^{j}\rho]_\pm;\s) + 
\\
& + \mkern-20mu\sum_{-c-1\le i\le c-1%
}\ \sum_{i+1\le j\le
c}\mkern-75mu\rule[-4.5ex]{0pt}{2ex}_{i+j < -1}\mkern25mu
\delta([\nu^{-i}\rho,\nu^{c}\rho])
 \times
\delta([\nu^{j+1} \rho,\nu^{d}\rho]) \otimes
L_\a(\delta([\nu^{i+1} \rho,\nu^{j}\rho]);\s) + 
\\[-1.5ex]
&\mkern100mu+ \mkern-10mu\sum_{i=-c-1}^{\pm \a - 1}
\delta([\nu^{-i}\rho,\nu^{c}\rho]) \t \delta([\nu^{i+1}\rho,\nu^{d}\rho])\o\s.
\end{align*}
For $c<d$ we have \begin{align*}
\mu^*\big(&L(\delta([\nu^{-c}\rho,\nu^d\rho]);\sigma)\big)  
\\
&= \mu^\ast\big(L_\alpha(\delta([\nu^{-c}\rho,\nu^d\rho]);\sigma)\big) = \mu^\ast\big(L_{proper}(\delta([\nu^{-c}\rho,\nu^d\rho]);\sigma)\big)  
\\
&= \mkern-20mu\sum_{-c-1\le i\le d-1%
}\ \sum_{i+1\le j\le
d}\mkern-75mu\rule[-4.5ex]{0pt}{2ex}_{0\le i+j}\mkern25mu
L(\delta([\nu^{-i}\rho,\nu^c\rho]),\delta([\nu^{j+1}\rho,\nu^d\rho])\big) \otimes
L_\alpha(\delta([\nu^{i+1}\rho,\nu^j\rho]);\sigma)) +{} 
 \\[-1.5ex]
&\mkern100mu+ \mkern+1mu \sum_{i=\a}^d
L(\delta([\nu^{-i}\rho,\nu^c\rho]),\delta([\nu^{i+1}\rho, \nu^d\rho])) \otimes
\sigma. 
\end{align*}
\end{corollary}

\begin{proof} We have
\begin{align*}
\mu^*\left(\delta([\nu^{-c}\rho,\nu^{d}\rho])\r\s\right) & =
\sum_{i= -c
-1}^{ d}
\sum_{j=i}^{ d}
\big ( \delta([\nu^{-i}\rho,\nu^{c}\rho])
 \times
\delta([\nu^{j+1} \rho,\nu^{d}\rho]) \big ) \otimes{}
\\
& \qquad \otimes
\big(\delta([\nu^{i+1} \rho,\nu^{j}\rho]_+;\s)+ \delta([\nu^{i+1} \rho,\nu^{j}\rho]_-;\s) +{} \\
& \qquad + L_\a(\delta([\nu^{i+1} \rho,\nu^{j}\rho]);\s)\big).
\end{align*}

In the previous theorem  we have determined the Jacquet modules of $GL$-type which coincide with the appropriate terms in the above formulas. Therefore, it remains to consider the case $i<j$.

We shall first analyze summands of the form $\ldots \o \delta([\nu^{i+1} \rho,\nu^{j}\rho]_+;\s)$ when this representation is non-zero (i.e., when $[i+1,j]\cap [-\a,\a]\ne\emptyset$). Then Theorems \ref{tmprvi} and \ref{teoremdrugi} imply that the minimal non-zero standard Jacquet module of this representation  contains at least one  term of the form $\dots \o \nu^{\a}\rho\o\s$.

By transitivity of Jacquet modules, if $\pi$ is an irreducible subquotient of $\delta([\nu^{-c} \rho,\nu^{d}\rho]) \rtimes \s$ such that $\mu^{\ast}(\pi) \geq \ldots \o \delta([\nu^{i+1} \rho,\nu^{j}\rho]_+;\s)$, then its minimal non-zero standard Jacquet module contains a term of the form $\dots \o \nu^{\a}\rho\o\s$. Using Theorems \ref{tmprvi} and \ref{teoremdrugi}, we see that minimal non-zero standard Jacquet modules of $\delta([\nu^{-c} \rho,\nu^{d}\rho]_-;\s)$ and of $L(\delta([\nu^{-c} \rho,\nu^{d}\rho]);\s)$  do not contain a representation of the form $\ldots \o \nu^{\a}\rho\o\s$. Consequently, all terms of the form $\ldots \o \delta([\nu^{i+1} \rho,\nu^{j}\rho]_+;\s)$ are in the Jacquet module of $\delta([\nu^{-c} \rho,\nu^{d}\rho]_+;\s)$.

Let us now analyze the case of summands
$$
\delta([\nu^{-i}\rho,\nu^{c}\rho])
 \times
\delta([\nu^{j+1} \rho,\nu^{d}\rho]) \o \delta([\nu^{i+1} \rho,\nu^{j}\rho]_-;\s)
$$
when this representation is non-zero,
i.e., when $[-\a,\a]\subseteq [i+1,j]$.
Then $i\leq -\a-1$ and $\a\leq j$. Now Lemma \ref{lematehn} and Theorem \ref{teoremdrugi} imply that there are no irreducible subquotients of this term in Jacquet module of the Langlands quotient.
Suppose that some subquotient of the form $\pi'\o  \delta([\nu^{i+1} \rho,\nu^{j}\rho]_-;\s)$ appears in the Jacquet module of $\delta([\nu^{-c} \rho,\nu^{d}\rho]_+;\s)$. Two possibilities will be studied separately.

We first consider the case $|i+1|\leq j$, and let $\varphi$ stand for any irreducible subquotient of the minimal non-zero Jacquet module of $\pi'\o \delta([\nu^{i+1} \rho,\nu^{j}\rho])\o\s$. Then $\varphi $ must also be in the Jacquet module of $\delta([\nu^{-c} \rho,\nu^{d}\rho]_+;\s)$. By Theorem \ref{teoremdrugi}, $s_{GL}(\delta([\nu^{-c} \rho,\nu^{d}\rho]_-;\s))\leq s_{GL}(\delta([\nu^{-c} \rho,\nu^{d}\rho]_+;\s)$, and the difference is $\sum_{i=-\a}^{\a-1}
\delta([\nu^{-i}\rho,\nu^{c}\rho]) \t \delta([\nu^{i+1}\rho,\nu^{d}\rho])\o\s$. Passing to the minimal non-zero Jacquet module, this will give terms of the form
$$
\ldots\o\nu^{-\a+1}\rho\o\s, \ldots\o\nu^{-\a+2}\rho\o\s, \ldots, \ldots\o\nu^{\a-1}\rho\o\s, \ldots\o\nu^{\a}\rho\o\s.
$$
Since $i+1\leq -\a$, we see that $\varphi$ cannot be in Jacquet module of the difference. Therefore, $\varphi$ is also contained in the Jacquet module of
$\delta([\nu^{-c} \rho,\nu^{d}\rho]_-;\s)$. In the same way we see that the multiplicity of $\varphi$ in Jacquet modules of both representations $\delta([\nu^{-c} \rho,\nu^{d}\rho]_\pm;\s)$ is the same (and strictly positive).

Now we shall study the multiplicity of $\varphi$ in Jacquet modules of $\delta([\nu^{-c} \rho,\nu^{d}\rho]_\pm; \s)$ using transitivity of Jacquet modules, through the parabolic subgroup corresponding to $\pi'\o \delta([\nu^{i+1} \rho,\nu^{j}\rho])\o\s$. For $c<j$, we clearly need to study only
\begin{multline*}
\delta([\nu^{-i}\rho,\nu^{c}\rho])
 \times
\delta([\nu^{j+1} \rho,\nu^{d}\rho]) \otimes
\delta([\nu^{i+1} \rho,\nu^{j}\rho]_+;\s) + {} \\
\delta([\nu^{-i}\rho,\nu^{c}\rho])
 \times
\delta([\nu^{j+1} \rho,\nu^{d}\rho]) \otimes
\delta([\nu^{i+1} \rho,\nu^{j}\rho]_-;\s).
\end{multline*}
Applying Theorem \ref{teoremdrugi} to $\delta([\nu^{i+1} \rho,\nu^{j}\rho]_\pm;\s)$, we get that the multiplicity of $\varphi$ in the first summand is greater than or equal to its multiplicity in the second summand. We have already proved that the first term belongs entirely to the Jacquet module of $\delta([\nu^{-c} \rho,\nu^{d}\rho]_+;\s)$. Now our assumption, that the subquotient $\pi'\o\delta([\nu^{i+1} \rho,\nu^{j}\rho]_-;\s)$ of the second sum, in which $\varphi$ has positive multiplicity, belongs to Jacquet module of $\delta([\nu^{-c} \rho,\nu^{d}\rho]_+;\s)$, implies that the multiplicity of $\varphi$ in $\delta([\nu^{-c} \rho,\nu^{d}\rho]_+;\s)$ is strictly greater then the multiplicity of $\varphi$ in Jacquet module of $\delta([\nu^{-c} \rho,\nu^{d}\rho]_-;\s)$. This contradicts the fact that multiplicities are the same.

The case $j\leq c$ can be handled in the same way, but more easily.

This completes the proof that all terms of the form $\pi'\o\delta([\nu^{i+1} \rho,\nu^{j}\rho]_-;\s)$ are in the Jacquet module of $\delta([\nu^{-c} \rho,\nu^{d}\rho]_-;\s)$ if $|i+1| \leq j$. The case $j<|i+1|$ can be handled in completely analogous way, but working with the segment $[\nu^{-j} \rho,\nu^{-i-1}\rho]$ instead of $[\nu^{i+1} \rho,\nu^{j}\rho]$.

What is left is to determine where do the non-zero representations of the form $\pi'\o L( \delta([\nu^{i+1} \rho$,$\nu^{j}\rho]);\s)$ belong. Clearly, we can assume $[i+1,j]\not\subseteq [-\a+1,\a-1]$.

First observe that if $c+1\leq j$, then $\pi'\o L( \delta([\nu^{i+1} \rho,\nu^{j}\rho]);\s)$ contains a representation of the form $\nu^{-j}\rho$ in the cuspidal support. By the  previous theorem, is not in the discrete series, and in this case the above term belongs to the Jacquet module of $L( \delta([\nu^{-c} \rho,\nu^{d}\rho]);\s)$.

It remains to consider the case $j\leq c$. Let us first assume $i+j<-1$. Since $i+1\leq j$, we obtain $|j|\leq -i-1$. Now the condition $L( \delta([\nu^{i+1} \rho,\nu^{j}\rho]);\s) \neq 0$ gives $\a\leq -i-1$, i.e., $i+1\leq - \a$. By Theorem \ref{teoremdrugi}, $\d([\nu^{i+1}\rho,\nu^{d}\rho])\t\d([\nu^{-i}\rho,\nu^{c}\rho])\o\s$ appears in the Jacquet module of $\delta([\nu^{-c} \rho,\nu^{d}\rho]_\pm;\s)$. Looking at
\begin{multline*}
m^* \big (\d([\nu^{i+1}\rho,\nu^{d}\rho])\t\d([\nu^{-i}\rho,\nu^{c}\rho]) \big ) = \\
\big (\sum_{l = i}^{d} \d([\nu^{l+1}\rho,\nu^{d}\rho]) \otimes \d([\nu^{i+1}\rho,\nu^{l}\rho]) \big ) \times \big ( \!\!\! \sum_{k = -i-1}^{c} \d([\nu^{k+1}\rho,\nu^{c}\rho]) \otimes \d([\nu^{-i}\rho,\nu^{k}\rho]) \big )
\end{multline*}
we deduce that Jacquet modules of both $\delta([\nu^{-c} \rho,\nu^{d}\rho]_\pm;\sigma)$ contain
\begin{equation*}
\delta([\nu^{j+1}\rho,\nu^{d}\rho]) \times \delta([\nu^{-i}\rho,\nu^{c}\rho]) \otimes \delta([\nu^{i+1}\rho,\nu^{j}\rho]) \otimes \s.
\end{equation*}
Therefore,
\begin{multline*}
\delta([\nu^{j+1}\rho,\nu^{d}\rho])
 \times
\delta([\nu^{-i} \rho,\nu^{c}\rho]) \otimes
L(\delta([\nu^{-j} \rho,\nu^{-i-1}\rho]);\sigma) = \\
\delta([\nu^{j+1}\rho,\nu^{d}\rho])
 \times
\delta([\nu^{-i} \rho,\nu^{c}\rho]) \otimes
L(\delta([\nu^{i+1} \rho,\nu^{j}\rho]);\sigma)
\end{multline*}
appears in Jacquet modules of both $\delta([\nu^{-c} \rho,\nu^{d}\rho]_\pm;\sigma)$ and appears with multiplicity two in Jacquet module of $\delta([\nu^{-c}\rho,\nu^{d}\rho]) \rtimes \s$.

Note that for $i + j = -1$ we have either $\delta([\nu^{i+1} \rho,\nu^{j}\rho]) \rtimes \s = \delta([\nu^{i+1} \rho,\nu^{j}\rho]_+;\sigma) + \delta([\nu^{i+1} \rho,\nu^{j}\rho]_-;\sigma)$ or $\delta([\nu^{i+1} \rho,\nu^{j}\rho]) \rtimes \s = \delta([\nu^{i+1} \rho,\nu^{j}\rho]_+;\sigma)$.

Now assume $-1<i+j$.

We immediately get $|i+1|\leq j$, while the non-triviality of $L(\delta([\nu^{i+1} \rho,\nu^{j}\rho]);\sigma)$ implies $\a\leq j$.
By Theorem \ref{teoremdrugi}, Jacquet module of the Langlands quotient
$
L(\delta([\nu^{-c} \rho,\nu^{d}\rho]);
\sigma)
$
contains $L(\delta([\nu^{-j}\rho,\nu^{c}\rho]) ,
\delta([\nu^{j+1} \rho,\nu^{d}\rho])) \otimes
\sigma$. Consequently, in the Jacquet module of $L(\delta([\nu^{-c} \rho,\nu^{d}\rho]); \sigma)$,
$$m^* \big (L(\d([\nu^{-j}\rho,\nu^{c}\rho]), \d([\nu^{j+1}\rho,\nu^{d}\rho])) \big ) \otimes \s,$$
must also appear, and equals
$$\big (\sum_{l = j}^{d} \d([\nu^{l+1}\rho,\nu^{d}\rho]) \otimes \d([\nu^{j+1}\rho,\nu^{l}\rho]) \big ) \times \big ( \sum_{k = -j-1}^{c} \d([\nu^{k+1}\rho,\nu^{c}\rho]) \otimes \d([\nu^{-j}\rho,\nu^{k}\rho]) \big ) \otimes \s$$
$$- \big (\sum_{l' = -j-1}^{d} \d([\nu^{l'+1}\rho,\nu^{d}\rho]) \otimes \d([\nu^{-j}\rho,\nu^{l'}\rho]) \big ) \times \big ( \sum_{k' = j}^{c} \d([\nu^{k'+1}\rho,\nu^{c}\rho]) \otimes \d([\nu^{j+1}\rho,\nu^{k'}\rho]) \big ) \otimes \s. $$
For pairs of indices $(k, l) = (-i-1,j)$ and $(k', l') = (j,-i-1)$ we get that
\begin{equation*}
L(\d([\nu^{-i}\rho,\nu^{c}\rho]), \d([\nu^{j+1}\rho,\nu^{d}\rho])) \otimes \d([\nu^{-j}\rho,\nu^{-i-1}\rho]) \otimes \s
\end{equation*}
is contained in Jacquet module of $L(\delta([\nu^{-c} \rho,\nu^{d}\rho]);
\sigma)$
and it directly follows that
$$
L(\delta([\nu^{-i}\rho,\nu^{c}\rho]) ,
\delta([\nu^{j+1} \rho,\nu^{d}\rho])) \otimes
L(\delta([\nu^{i+1} \rho,\nu^{j}\rho]);\sigma)
$$
is contained in the Jacquet module of
$L(\delta([\nu^{-c} \rho,\nu^{d}\rho]);
\sigma)$.

An irreducible constituent of the form $\pi' \otimes L(\delta([\nu^{i+1} \rho,\nu^{j}\rho]);\sigma)$, where $i < j$, appears in $\mu^{\ast}(\delta([\nu^{-c} \rho,\nu^{d}\rho]) \rtimes
\sigma)$ either as a subquotient of
\begin{equation*}
\delta([\nu^{-i} \rho,\nu^{c}\rho]) \times \delta([\nu^{j+1} \rho,\nu^{d}\rho]) \otimes  \delta([\nu^{i+1} \rho,\nu^{j}\rho]) \rtimes \s
\end{equation*}
or as a subquotient of
\begin{equation*}
\delta([\nu^{j+1} \rho,\nu^{c}\rho]) \times \delta([\nu^{-i+1} \rho,\nu^{d}\rho]) \otimes  \delta([\nu^{-j} \rho,\nu^{-i-1}\rho]) \rtimes \s.
\end{equation*}
On the left-hand side appears either an irreducible representation or length two representation, and in both cases we have determined in which Jacquet modules these representations are contained.

This completes the proof of Corollary \ref{korprvi}.
\end{proof}

\section{Reducibility at zero}

The purpose of this section is to provide a complete and uniform treatment of Jacquet modules of representations of segment type in the case when the reducibility exponent $\a$ equals zero. In this case the induced representation $\rho\r\s$ reduces and we have the decomposition
$$
\rho\r\s=\tau_1\oplus\tau_{-1}
$$
into irreducible non-equivalent representations. The choice of signs $\pm$ is arbitrary, but fixed.

Fix non-negative integers $c$ and $d$ satisfying $c\leq d$, and consider the following representation:
\begin{multline*}
(\nu\rho\t \nu^{2}\rho \t\dots\t \nu^{d}\rho)\t (\nu\rho\t \nu^{2}\rho \t\dots\t \nu^{c}\rho) \t\rho\r \s \\
\cong \ \ \oplus_{i=-1}^1 \ (\nu\rho\t \nu^{2}\rho \t\dots\t \nu^{d}\rho)\t (\nu\rho\t \nu^{2}\rho \t\dots\t \nu^{c}\rho) \r\tau_i.
\end{multline*}

\begin{remark}
A direct consequence of the formula for $\mu^*$ is that no irreducible subquotient of $(\nu\rho\t \nu^{2}\rho \t\dots\t \nu^{d}\rho)\t (\nu\rho\t \nu^{2}\rho \t\dots\t \nu^{c}\rho) \r\tau_i$ can have in its Jacquet module a term of the form $\ldots \o\tau_{-i}$, for $i \in \{ 1, -1 \}$.

\end{remark}

The multiplicity of $\d([\nu\rho,\nu^{d}\rho])\t \d([\nu\rho,\nu^{c}\rho]) \o\tau_i$ in the $GL$-type Jacquet module of the above full induced representation, is one (the same holds for $s_{GL}(\d([\nu^{-c}\rho,\nu^{d}\rho])\r\s)$). We denote by $\d([\nu^{-c}\rho,\nu^{d}\rho]_{\tau_i};\s)$ the unique irreducible subquotient which has this representation in its Jacquet module (it is also a subquotient of $\d([\nu^{-c}\rho,\nu^{d}\rho])\r\s$). Such subquotient contains $\d([\nu^{-c}\rho,\nu^{d}\rho])\o\s$ in its Jacquet module.

We continue with conventions that we have introduced for positive reducibility. Therefore, $L_0( \d([\nu^{a}\rho,\nu^{b}\rho]);\s)$ denotes the usual Langlands quotient if $a\ne-b$, and $L_0( \d([\nu^{a}\rho,\nu^{b}\rho]);\s)$ $=0$  if $a=-b$. Furthermore, $L_0(\emptyset;\sigma)=0$, and $\d(\emptyset;\sigma)=\s$. If $0\not\in [a,b]$, then we take $ \d([\nu^{a}\rho,\nu^{b}\rho]_{\tau_i};\s)=0$. We also continue with symmetrization of the notation.

\begin{theorem} \label{teoremtreci} Let $c,d\in\Z$ be such that $0 \leq c\leq d$.
Then
\begin{enumerate}

\item[$(C1)$] If $c<d$, then $\delta([\nu^{-c}\rho,\nu^{d}\rho])\r\s$ is a representation of length three, and the composition series consists of two irreducible subrepresentations $\delta([\nu^{-c}\rho,\nu^{d}\rho]_{\tau_k};\s)$, $k=-1,1$, and the Langlands quotient $L(\delta([\nu^{-c}\rho,\nu^{d}\rho]);\s)$. \\ For $c=d$,  $\delta([\nu^{-c}\rho,\nu^{c}\rho])\r\s$ is a representation of length two, and the composition series consists of two irreducible subrepresentations $\delta([\nu^{-c}\rho,\nu^{c}\rho]_{\tau_k};\s)$, $k=-1,1$.

\item[$(C2)$] For $c<d$, $\delta([\nu^{-c}\rho,\nu^{d}\rho]_{\tau_k};\s)$ are square integrable. For $c=d$, the representations are tempered, but not square integrable.

\item[$(C3)$] $
s_{GL}(\delta([\nu^{-c}\rho,\nu^{d}\rho]_{\tau_k};\s))=\sum_{i=-c-1}^{-1}
\delta([\nu^{-i}\rho,\nu^{c}\rho]) \t \delta([\nu^{i+1}\rho,\nu^{d}\rho])\o\s.
$

\item[$(C4)$]  For $c<d$ holds

$s_{GL}(L(\delta([\nu^{-c}\rho,\nu^{d}\rho]);\s))
=
\sum_{i=0}^{d}
L(\delta([\nu^{-i}\rho,\nu^{c}\rho]) , \delta([\nu^{i+1}\rho,\nu^{d}\rho]))\o\s.
$

\item[$(C5)$] Suppose $c=d$. If $c > 0$ then
\begin{align*}
s_{top} (\delta([\nu^{-c}\rho,\nu^{c}\rho]_{\tau_k};\s)) & =  2\, \nu^{c}\rho\o\delta([\nu^{-c+1}\rho,\nu^{c}\rho]_{\tau_k};\s) + \\
& \qquad + \nu^{c}\rho\o L_0(\delta([\nu^{-c+1}\rho,\nu^{
c}\rho]);\s).
\end{align*}
If $c=0$ then $ s_{top}(\delta([\rho,\rho]_{\tau_k};\s))=\rho\o\s.$

\item[$(C6)$] For $c<d$
\begin{align*}
s_{top} (\delta([\nu^{-c}\rho,\nu^{d}\rho]_{\tau_k};\s)) & = \nu^{d}\rho\o\delta([\nu^{-c}\rho,\nu^{d-1}\rho]_{\tau_k};\s) + \\
& \qquad + \nu^{c}\rho\o\delta([\nu^{-c+1}\rho,\nu^{d}\rho]_{\tau_k};\s).
\end{align*}

\item[$(C7)$] For $c<d$
\begin{align*}
s_{top}(L(\delta([\nu^{-c}\rho,\nu^{d}\rho]);\s)) & = \nu^{d}\rho\o L_{0}(\delta([\nu^{-c}\rho,\nu^{d-1}\rho]);\s) + \\
& \qquad + \nu^{c}\rho\o L_{0}(\delta([\nu^{-c+1}\rho,\nu^{d}\rho]);\s).
\end{align*}

\end{enumerate}
\end{theorem}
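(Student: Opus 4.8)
The plan is to follow the scheme of the proof of Theorem~\ref{teoremdrugi} almost verbatim, the two subrepresentations $\d([\nu^{-c}\rho,\nu^{d}\rho]_{\tau_k};\s)$, $k=\pm1$, taking over the role played there by $\d([\nu^{-c}\rho,\nu^{d}\rho]_\pm;\s)$, and to prove the whole statement by an interlaced induction on $c$ and $d$. First I would dispose of the facts that require no induction: the length three assertion of $(C1)$ for $c<d$ (Theorem~2.1 of \cite{Mu-CSSP}), the length two assertion for $c=d$ (the discrete series classification of \cite{Moe-T}), that the $\tau_k$-components are subrepresentations (for $c=d$ the ambient representation is induced from a tempered one, hence semisimple; for $c<d$ a discrete series subquotient lies in the socle) while the remaining subquotient, namely the one not containing $\d([\nu^{-c}\rho,\nu^{d}\rho])\o\s$ in its Jacquet module, is the Langlands quotient; and $(C2)$ (square integrability from \cite{Moe-T}, temperedness for $c=d$ because $\d([\nu^{-c}\rho,\nu^{c}\rho])$ is square integrable so $\d([\nu^{-c}\rho,\nu^{c}\rho])\r\s$ is tempered). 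The $s_{GL}$-equalities $(C3)$ and $(C4)$ then follow by pinching: one has $\sum_{i=-c-1}^{-1}\d([\nu^{-i}\rho,\nu^{c}\rho])\t\d([\nu^{i+1}\rho,\nu^{d}\rho])\o\s\leq s_{GL}(\d([\nu^{-c}\rho,\nu^{d}\rho]_{\tau_k};\s))$, and, using the Langlands embeddings $L(\d([\nu^{-c}\rho,\nu^{d}\rho]);\s)\h\d([\nu^{-j+1}\rho,\nu^{c}\rho])\t\d([\nu^{j}\rho,\nu^{d}\rho])\r\s$ for $1\le j\le d+1$ exactly as in the proof of Theorem~\ref{tmprvi}, $\sum_{i=0}^{d}\d([\nu^{-i}\rho,\nu^{c}\rho])\t\d([\nu^{i+1}\rho,\nu^{d}\rho])\o\s\leq s_{GL}(L(\d([\nu^{-c}\rho,\nu^{d}\rho]);\s))$; comparing with $s_{GL}(\d([\nu^{-c}\rho,\nu^{d}\rho])\r\s)$, computed from $\mu^*$ and the $GL$-composition series, forces equalities. (For $\alpha=0$ the a priori bounds of the type used in \cite{T-seg} already coincide; alternatively $(C3)$ can be recovered during the induction from the upper bound and $(C6)$ by applying $r_{(p,(d+c)p)}$, as $(B3)$ is obtained in the proof of Theorem~\ref{teoremdrugi}.)

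The inductive scheme is the same interlacing as for Theorem~\ref{teoremdrugi}. The base case $c=d=0$ is the decomposition $\rho\r\s=\tau_1\oplus\tau_{-1}$, where $\d([\rho,\rho]_{\tau_k};\s)=\tau_k$ is tempered, not square integrable, and $s_{top}(\tau_k)=\rho\o\s$ by Frobenius reciprocity; this is $(C5)$ for $c=0$. In the inductive step one computes from the formula for $\mu^*$ that $s_{top}(\d([\nu^{-c}\rho,\nu^{d}\rho])\r\s)=\nu^{d}\rho\o\d([\nu^{-c}\rho,\nu^{d-1}\rho])\r\s+\nu^{c}\rho\o\d([\nu^{-c+1}\rho,\nu^{d}\rho])\r\s$, expands each factor on the right via the inductive hypothesis into its $\tau_1$-, $\tau_{-1}$- and $L_0$-components, and distributes the resulting summands among the irreducible subquotients of $\d([\nu^{-c}\rho,\nu^{d}\rho])\r\s$. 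Using the embeddings $\d([\nu^{-c}\rho,\nu^{d}\rho])\h\nu^{d}\rho\t\d([\nu^{-c}\rho,\nu^{d-1}\rho])$ and $\d([\nu^{-c}\rho,\nu^{d}\rho])\h\d([\nu^{-c+1}\rho,\nu^{d}\rho])\t\nu^{-c}\rho$ (the latter, after $\r\s$, rearranged by means of $\nu^{-c}\rho\r\s\cong\nu^{c}\rho\r\s$ and the commutations valid for $c>0$), together with Frobenius reciprocity and transitivity of Jacquet modules, the terms $\nu^{d}\rho\o\d([\nu^{-c}\rho,\nu^{d-1}\rho]_{\tau_k};\s)$ and $\nu^{c}\rho\o\d([\nu^{-c+1}\rho,\nu^{d}\rho]_{\tau_k};\s)$ are forced into $\d([\nu^{-c}\rho,\nu^{d}\rho]_{\tau_k};\s)$: since $\d([\nu^{-c}\rho,\nu^{d}\rho]_{\tau_k};\s)$ carries the datum $\d([\nu\rho,\nu^{d}\rho])\t\d([\nu\rho,\nu^{c}\rho])\o\tau_k$ in its Jacquet module, transitivity pins down the classical-group part of the relevant $s_{top}$-summand to be $\d([\nu^{-c}\rho,\nu^{d-1}\rho]_{\tau_k};\s)$ and $\d([\nu^{-c+1}\rho,\nu^{d}\rho]_{\tau_k};\s)$ respectively. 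The remaining two summands, $\nu^{d}\rho\o L_0(\d([\nu^{-c}\rho,\nu^{d-1}\rho]);\s)$ and $\nu^{c}\rho\o L_0(\d([\nu^{-c+1}\rho,\nu^{d}\rho]);\s)$, must then go into $L(\d([\nu^{-c}\rho,\nu^{d}\rho]);\s)$, which, by its standard module embedding $L(\d([\nu^{-c}\rho,\nu^{d}\rho]);\s)\h\d([\nu^{-d}\rho,\nu^{c}\rho])\r\s$ suitably rearranged, must possess a top Jacquet module term with leftmost factor $\nu^{d}\rho$ and one with leftmost factor $\nu^{c}\rho$; since $s_{top}$ is additive over composition series, the inclusions obtained are equalities, which yields $(C6)$, $(C7)$ (and, via the $r_{(p,(d+c)p)}$-comparison indicated above, $(C3)$, $(C4)$). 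The case $d=c$ is treated first: there the two summands of $s_{top}(\d([\nu^{-c}\rho,\nu^{c}\rho])\r\s)$ coincide and equal $2\,\nu^{c}\rho\o\d([\nu^{-c+1}\rho,\nu^{c}\rho])\r\s$; the doubled $\tau_k$-terms land in $\d([\nu^{-c}\rho,\nu^{c}\rho]_{\tau_k};\s)$ as above, and a multiplicity count as in the proof of $(B5)$---using that $\nu^{-c}\rho$ is absent from $s_{GL}(\d([\nu^{-c+1}\rho,\nu^{c}\rho]_{\tau_k};\s))$ and that there is no Langlands quotient---forces $\nu^{c}\rho\o L_0(\d([\nu^{-c+1}\rho,\nu^{c}\rho]);\s)$ into each of the two components, giving $(C5)$.

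The point requiring the most care, and the essential novelty compared with Theorem~\ref{teoremdrugi}, is the separation of $\d([\nu^{-c}\rho,\nu^{d}\rho]_{\tau_1};\s)$ from $\d([\nu^{-c}\rho,\nu^{d}\rho]_{\tau_{-1}};\s)$ throughout the induction: by $(C3)$ these two representations have \emph{identical} Jacquet modules of $GL$-type, so exponent and cuspidal-support arguments cannot tell them apart, and one must consistently invoke the Remark (no irreducible subquotient of $(\nu\rho\t\cdots)\t(\nu\rho\t\cdots)\t\rho\r\tau_i$ contains $\ldots\o\tau_{-i}$ in its Jacquet module) together with the defining datum $\d([\nu\rho,\nu^{d}\rho])\t\d([\nu\rho,\nu^{c}\rho])\o\tau_k$ to keep each summand of $s_{top}$ attached to the matching component. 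The remaining obstacles are pure bookkeeping: the degenerate boundary cases $c=0$ (where $\d([\nu\rho,\nu^{d}\rho])\r\s$ is irreducible, hence $\d([\nu\rho,\nu^{d}\rho]_{\tau_k};\s)=0$ and $L_0(\d([\nu\rho,\nu^{d}\rho]);\s)$ is the whole representation), the pair $(c,d)=(0,1)$ (where $L_0$ of the smaller datum vanishes), and $c=d$ (no Langlands quotient, needing the extra multiplicity argument above), each of which has to be handled separately but without any change to the general strategy.
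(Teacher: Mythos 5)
Your proposal is correct and follows essentially the same route as the paper: lengths and square integrability are quoted from \cite{Mu-CSSP}, \cite{Moe-T} and \cite{T-seg}; $(C3)$ comes from the coinciding a priori bounds (the paper simply cites \cite{T-seg}), with $(C4)$ obtained by subtraction from $s_{GL}(\delta([\nu^{-c}\rho,\nu^{d}\rho])\rtimes\s)$; and $(C5)$--$(C7)$ are proved by the same interlaced induction on $c$ and $d$, distributing the summands of $s_{top}$ of the full induced representation exactly as in the proof of Theorem \ref{teoremdrugi}. Your emphasis on separating the $\tau_1$- and $\tau_{-1}$-components via the datum $\delta([\nu\rho,\nu^{d}\rho])\times\delta([\nu\rho,\nu^{c}\rho])\otimes\tau_k$ and the Remark preceding the theorem is precisely the mechanism the paper relies on implicitly.
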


\begin{proof}
Regarding $(C1)$, in the tempered case, length two was proved in \cite{T-seg}. Also, in the same paper $(C2)$ and $(C3)$ have been proved. On the other hand, the length three claim in $(C1)$ follows from Theorem 2.1 of \cite{Mu-CSSP}. Observe that it can be proved, in the same way as in the proof of Theorem \ref{teoremdrugi}, that the sum of all sums on the right-hand sides of $(C3)$ and $(C4)$ equals $s_{GL}(\delta([\nu^{-c}\rho,\nu^{d}\rho])\r\s)$. Thus, $(C4)$ follows. It remains to prove $(C5)$, $(C6)$ and $(C7)$.

Note that $
  s_{top}(\delta([\rho,\rho]_{\tau_i};\s))=\rho\o\s.
 $
 In the rest of the proof it is enough to consider the case $0<c+d$. The formula for $\mu^*$ now gives
\begin{equation*}
s_{top} (\delta([\nu^{-c}\rho,\nu^{d}\rho])\r\s)= \nu^{d}\rho\o\delta([\nu^{-c}\rho,\nu^{d-1}\rho])\r\s + \nu^{c}\rho\o\delta([\nu^{-c+1}\rho,\nu^{d}\rho])\r\s.
\end{equation*}

First we shall prove the theorem in the case $c=0$. The proof will be by induction on $d$.

First consider the case $d=1$.  The above formula and the previous theorem, together with symmetrization of the notation, imply
$$
s_{top} (\delta([\rho,\nu\rho])\r\s)= \nu\rho\o\tau_1+\nu\rho\o\tau_{-1} + \rho \o L_0(\nu\rho;\s).
$$
Therefore, the theorem holds in this situation ($\nu\rho\o\tau_i$ is obviously in the Jacquet module of $\delta([\rho,\nu\rho]_{\tau_i};\s)$). Fix $d>1$ and assume that the theorem holds for $d-1$. Then the inductive assumption implies the following decomposition into irreducible representations
\begin{align*}
s_{top} (\delta([\rho,\nu^{d}\rho])\r\s) & = \nu^{d}\rho\o\delta([\rho,\nu^{d-1}\rho])\r\s + \rho\o\delta([\nu\rho,\nu^{d}\rho])\r\s \\
& =\nu^{d}\rho\o\delta([\rho,\nu^{d-1}\rho]_{\tau_1};\s)
+\nu^{d}\rho\o\delta([\rho,\nu^{d-1}\rho]_{\tau_{-1}};\s) + {}
\\
& \qquad \, +\nu^{d}\rho\o L_0(\delta([\rho,\nu^{d-1}\rho]);\s) + \rho\o L_0(\delta([\nu\rho,\nu^{d}\rho]);\s).
\end{align*}

First two terms obviously belong to Jacquet modules of $\delta([\rho,\nu^{d}\rho]_{\tau_i};\s)$, $i=1,-1$, since neither  $\nu^{-(d-1)}\rho$ nor $\nu^{-d}\rho$ shows up on the cuspidal support of the discrete series (and $\nu^{d}\rho\o\delta([\rho,\nu^{d-1}\rho]_{\tau_i};\s)$ is obviously in the Jacquet module of $\delta([\rho,\nu^d\rho]_{\tau_i};\s)$). Next, considering  $\nu^{-d}\rho$, we get that the last summand is in Jacquet module of the Langlands quotient. Observe that the Langlands quotient embeds into $\delta([\nu^{-d}\rho,\rho])\r\s \h  \nu^{d}\rho \t \delta([\nu^{-d+1}\rho,\rho])\r\s$ and
this implies that the third summand is in the Jacquet module of the Langlands quotient. This proves formulas $(C5)$, $(C6)$ and $(C7)$.

Now we fix
$
c>0,
$
and assume that the theorem holds for $c-1$. We proceed with induction, similarly as in the case $c=0$. We start with the case $d=c$.
We know that in this case $\delta([\nu^{-c}\rho,\nu^{c}\rho])\r\s$ is of length two. The above formula and symmetrization of notation give
\begin{multline*}
s_{top} (\delta([\nu^{-c}\rho,\nu^{c}\rho])\r\s)= 2 \, \nu^{c}\rho\o\delta([\nu^{-c+1}\rho,\nu^{c}\rho]_{\tau_1};\s) + \\
+ 2 \, \nu^{c}\rho\o\delta([\nu^{-c+1}\rho,\nu^{c}\rho]_{\tau_{-1}};\s) + 2 \, \nu^{c}\rho\o L(\delta([\nu^{-c+1}\rho,\nu^{c}\rho]);\s).
\end{multline*}
It follows directly that $2\,\nu^{c}\rho\o\delta([\nu^{-c+1}\rho,\nu^{c}\rho]_{\tau_i};\s)$ has to belong to Jacquet module of $\delta([\nu^{-c}\rho,\nu^{c}\rho]_{\tau_i};\s)$ (recall that we know that the induced representation is of length two). Furthermore, the fact that $\delta([\nu^{-c}\rho,\nu^{c}\rho])\o\s$ is in  Jacquet module of both  $\delta([\nu^{-c}\rho,\nu^{c}\rho]_{\tau_i};\s)$ and transitivity of Jacquet modules imply that
$ \nu^c\rho\o L(\delta([\nu^{-c+1}\rho,\nu^{c}\rho]);\s)$ must be in each of the Jacquet modules. This completes the proof of the theorem in this case.

Fix $d>c$ and assume that the theorem holds for $d-1$ and $c$, and also for $c-1$ and all $d \geq c-1$. The inductive assumptions imply the following decomposition into irreducible representations
\begin{align*}    
s_{top}(\delta([&\nu^{-c}\rho,\nu^d\rho])\rtimes\sigma) \\
&=
\nu^d\rho\otimes\delta([\nu^{-c}\rho,\nu^{d-1}\rho])\rtimes\sigma+\nu^c\rho\otimes\delta([\nu^{c+1}\rho,\nu^d\rho])\rtimes\sigma
\\
&=
\nu^d\rho\otimes\delta([\nu^{-c}\rho,\nu^{d-1}\rho]_{\tau_1};\sigma) +
\nu^c\rho\otimes\delta([\nu^{-c+1}\rho,\nu^d\rho]_{\tau_1};\sigma)+{} \\
&\qquad +\nu^d\rho\otimes\delta([\nu^{-c}\rho,\nu^{d-1}\rho]_{\tau_{-1}};\sigma) +
\nu^c\rho\otimes\delta([\nu^{-c+1}\rho,\nu^d\rho]_{\tau_{-1}};\sigma)+{} \\
&\qquad +\nu^d\rho\otimes L(\delta([\nu^{-c}\rho,\nu^{d-1}\rho]);\sigma) +
\nu^c\rho\otimes L(\delta([\nu^{-c+1}\rho,\nu^d\rho]);\sigma),
\end{align*}
and the rest of the proof follows in the same way as in the proof of Theorem \ref{teoremdrugi}.
\end{proof}

Again, we have an interpretation in terms of admissible triples:

\begin{remark}
Suppose that we have $c \neq d$ and $-c \leq 0 \leq d$. Then
\begin{equation*}
\Jord(\delta([\nu^{-c}\rho,\nu^{d}\rho]_{\tau_k};\s)) =   \Jord(\s) \cup \{ (2c+1, \rho),(2d+1, \rho)\}.
\end{equation*}
Furthermore, if we denote by $\e_k$ the $\e$-function corresponding to $\delta([\nu^{-c}\rho,\nu^{d}\rho]_{\tau_k};\s)$, then $\e_k((2c+1, \rho), (2d+1, \rho )) = 1$ and $\e_k((2d+1, \rho)) = k$.
\end{remark}

Using the previous theorem and Lemma \ref{lematehn}, which also holds when reducibility point equals zero, we obtain a complete description of Jacquet modules in this case. Proof of the following corollary can be obtained in the same manner as the proof of Corollary \ref{korprvi}, details being left to the reader.

\begin{corollary}
\label{korolar-drugi}
 Let $c,d\in\Z$ be such that $c\leq d$ and $-c\leq0\leq d$.
Then
\begin{align*}
\mu^*&\left(\delta([\nu^{-c}\rho,\nu^{d}\rho]_{\tau_{i}};\s)\right) 
\\
& =  \sum_{i = -c-1}^{d-1%
}  \sum_{j=i+1}^{d}
\delta([\nu^{-i}\rho,\nu^{c}\rho])
 \times
\delta([\nu^{j+1} \rho,\nu^{d}\rho]) \otimes
\delta([\nu^{i+1} \rho,\nu^{j}\rho]_{\tau_{i}};\s) +{} 
\\
& + \mkern-20mu\sum_{-c-1\le i\le c-1%
}\ \sum_{i+1\le j\le
c}\mkern-75mu\rule[-4.5ex]{0pt}{2ex}_{i+j < -1}\mkern25mu
\delta([\nu^{-i}\rho,\nu^{c}\rho])
 \times
\delta([\nu^{j+1} \rho,\nu^{d}\rho]) \otimes
L_0(\delta([\nu^{i+1} \rho,\nu^{j}\rho]);\s) +{} 
\\[-1.5ex]
&\mkern100mu+ \mkern-10mu\sum_{i=-c-1}^{-1}
\delta([\nu^{-i}\rho,\nu^{c}\rho]) \t \delta([\nu^{i+1}\rho,\nu^{d}\rho])\o\s.
\end{align*}
For $c<d$ we have
\begin{align*}
\mu^*\big(&L(\delta([\nu^{-c}\rho,\nu^d\rho]);\sigma)\big)  
\\
&= \mu^\ast\big(L_0(\delta([\nu^{-c}\rho,\nu^d\rho]);\sigma)\big)
= \mu^\ast\big(L_{proper}(\delta([\nu^{-c}\rho,\nu^d\rho]);\sigma)\big)  
\\
&= \mkern-20mu\sum_{-c-1\le i\le d-1%
}\ \sum_{i+1\le j\le
d}\mkern-75mu\rule[-4.5ex]{0pt}{2ex}_{0\le i+j}\mkern25mu
L(\delta([\nu^{-i}\rho,\nu^c\rho]),\delta([\nu^{j+1}\rho,\nu^d\rho])\big) \otimes
L_0(\delta([\nu^{i+1}\rho,\nu^j\rho]);\sigma)) +{}  
\\[-1.5ex]
&\mkern100mu+ \mkern+1mu \sum_{i=0}^d
L(\delta([\nu^{-i}\rho,\nu^c\rho]),\delta([\nu^{i+1}\rho, \nu^d\rho])) \otimes
\sigma. 
\end{align*}
\end{corollary}

\section{Jacquet modules of strongly positive representations}

In this section we present an alternative way to determine the formula for Jacquet modules of strongly positive representations, which can be viewed as a certain generalization of representations of segment type studied in the fourth section. An analogous formula is obtained in \cite{Ma-JMSP}.

We fix self-dual cuspidal representation $\rho$ of $GL(n_{\rho}, F)$ (this defines $n_{\rho}$) and cuspidal representation $\sigma$ of $G_{n_{\sigma}}$ (this defines $n_{\s}$). We assume that $\nu^{\a} \rho \rtimes \s$ reduces for $\a > 0$ and put $\e = 1$ if $\a$ is an integer and $\e = 1/2$ otherwise. Fix
$$
n_\e<n_{\e+1}<\dots <n_\a
$$
such that $\e-1 \leq n_\e$ and $n_{i} - \a$ is an integer for $i = \e, \e+1, \ldots, \a$. Observe that in this case also $i-1 \leq n_{i}$ for all indices.

It has been proved in Theorem 3.4 of \cite{Ma-SP} that the induced representation
$$
\delta([\nu^{\e}\rho,\nu^{n_\e}\rho])\times \dots \times \delta([\nu^{\a}\rho,\nu^{n_\a}\rho]) \rtimes \sigma
$$
has a unique irreducible subrepresentation, which we will denote by $DS_{\rho;\s}(n_\a,$ $\ldots,n_\e)$. By Theorem 4.6 of \cite{Ma-SP}, this representation is strongly positive, i.e., its Jacquet module of $GL$-type contains only irreducible subquotients with all exponents being positive. Furthermore, it has been proved in \cite{Moe-Ex, Moe-T} and separately in \cite{Ma-SP} that every strongly positive discrete series which contains only twists of the representation $\rho$ in its cuspidal support is isomorphic to some $DS_{\rho;\s}(n_\a,\dots,n_\e)$.

We will denote the unique irreducible subrepresentation of
$$
\delta([\nu^{\e}\rho,\nu^{n_\e}\rho])\times \dots \times \delta([\nu^{\a}\rho,\nu^{n_\a}\rho])
$$
by $Lad_\rho(n_\a,\dots,n_\e)$. This is the ladder representation $L( \delta([\nu^{\e}\rho,\nu^{n_\e}\rho]), \dots $, $\delta([\nu^{\a}\rho,\nu^{n_\a}\rho]))$, as introduced in \cite{LM}. Uniqueness of the irreducible subrepresentation of $\delta([\nu^{\e}\rho,\nu^{n_\e}\rho])\times \dots \times \delta([\nu^{\a}\rho,\nu^{n_\a}\rho]) \rtimes \sigma$ implies
$$
DS_{\rho;\s}(n_\a,\dots,n_\e) \h Lad_\rho(n_k,\dots,n_\e)\r \s,
$$
and this implies
$$
s_{GL}(DS_{\rho;\s}(n_\a,\dots,n_\e)) \leq  s_{GL}(Lad_\rho(n_\a,\dots,n_\e)\r \s).
$$
It is not hard to see that the only term on the right-hand side of the previous inequality which has all exponents positive in cuspidal support, is $Lad_\rho(n_\a,\dots,n_e)\o \s$.
Therefore, $s_{GL}(DS_{\rho;\s}(n_\a,\dots,n_\e)) \leq  Lad_\rho(n_\a,\dots,n_\e)\o \s$, which implies
$$
s_{GL}(DS_{\rho;\s}(n_\a,\dots,n_\e)) =  Lad_\rho(n_\a,\dots,n_\e)\o \s.
$$
Using Lemma 3.5 of \cite{Ma-JMSP} we see that this Jacquet module uniquely characterizes the strongly positive representation.

Using the formula for Jacquet modules of ladder representations from \cite{KL}, we deduce
\begin{multline*}
(m^*\o id)(s_{GL}(DS_{\rho;\s}(n_\a,\dots,n_\e))) = m^*( Lad_\rho(n_\a,\dots,n_\e))\o \s= \\
\sum_{c_\e < \dots < c_\a,
\atop
i-1 \leq c_{i} \leq n_{i}}
L(\d([\nu^{c_\e+1}\rho,\nu^{n_\e}\rho]), \ldots, \d([\nu^{c_\a+1}\rho,\nu^{n_\a}\rho])) \o Lad_\rho(c_\a,\dots,c_\e)\o \s,
\end{multline*}
which directly gives, using the above characterization of strongly positive representations by $GL$-type Jacquet modules,
\begin{multline*}
\mu^*(DS_{\rho;\s}(n_\a,\dots,n_\e)) = \\
\sum_{c_\e < \dots < c_\a,
\atop
i-1 \leq c_{i} \leq n_{i}}
L(\d([\nu^{c_\e+1}\rho,\nu^{n_\e}\rho]), \ldots, \d([\nu^{c_\a+1}\rho,\nu^{n_\a}\rho]))  \o DS_{\rho;\s}(c_\a,\dots,c_\e).
\end{multline*}

Now we shall try to give Jordan blocks interpretation of the above formula. Fix $\rho$ and $\s$ as above, and a sequence of integers
$$
k_{\lceil \a \rceil}>\dots >k_1\geq 0,
$$
where $\lceil \a \rceil$ denotes the smallest integer which is not smaller than $\a$. Integers $k_{1}, \ldots, k_{\lceil \a \rceil}$ are taken to be odd if $\a$ is integral. Otherwise, we take them to be even.

Denote $\Jord_{(\rho; k_{\lceil \a \rceil},\dots ,k_1)}=\{(\rho, k_{\lceil \a \rceil}),\dots ,(\rho,k_1)\}$, where we drop $(\rho,0)$ if it shows up on the right-hand side. As we have seen before, the induced representation
$$
\delta([\nu^{\lceil \a + \frac{1}{2} \rceil - \a}\rho,\nu^{(k_1-1)/2}\rho])\times \dots \times \delta([\nu^{\a}\rho,\nu^{(k_{\lceil \a \rceil}-1)/2}\rho]) \rtimes \sigma
$$
contains a unique irreducible subrepresentation, which is denoted by $\lambda_{\{(\rho, k_{\lceil \a \rceil}),\dots ,(\rho,k_1)\},\e_+,\s}$.
It is a discrete series reresentation and, by \cite{Moe-Ex}, it is attached to an admissible triple. But for such admissible triples we have an alternated partially defined function and there is at most one such function, which is already determined by Jordan blocks and the partial cuspidal support $\s $.
Now the above formula in this notation becomes
\begin{multline*}
\mu^*(\lambda_{\{(\rho, k_{\lceil \a \rceil}),\dots ,(\rho,k_1)\},\e_+,\s}) = \\
\sum_{l_1< \dots< l_{\lceil \a \rceil}
\atop
2(\lceil \a \rceil - \a + i)-1 \leq l_{i} \leq k_{i} + 1 }
L(\d([\nu^{(l_1+1)/2}\rho,\nu^{(k_1-1)/2}\rho]), \ldots, \d([\nu^{(l_{\lceil \a \rceil}+1)/2}\rho,\nu^{(k_{\lceil \a \rceil}-1)/2}\rho])) \o{} \\
 \o \lambda_{\{(\rho, l_{\lceil \a \rceil}),\dots ,(\rho,l_1)\},\e_+,\s},
\end{multline*}
where $k_{i}- l_{i}$ are integers for all $i$.

\section{Top Jacquet modules}

This section is devoted to determination of top Jacquet modules of general discrete series of classical groups.

Let us denote a discrete series representation by $\pi$, corresponding to an admissible triple $(\Jord(\pi),\ep,\pc)$.

The facts which we collect in the following lemma are well known (see \cite{Moe-Ex} and \cite{Moe-T}).

\begin{lemma} Suppose that $\tau\o\varphi$ is an irreducible representation contained in $s_{top}(\pi)$ and that $\rho$ is an irreducible self-dual representation of a general linear group. Let $\tau = \nu^{e(\tau)} \tau_{u}$, with $\tau_{u}$ unitarizable. Then
\begin{itemize}

\item[] $\tau_u$ is self-dual, $e(\tau)\in(1/2)\Z$ and $e(\tau)>0$;

\item[] $(\tau_u,2e(\tau)+1)\in \Jord(\pi)$;

\item[]  if $e(\tau) = \frac{1}{2}$ then $\ep((\rho,2))=1$;

\item[]  if $(\tau_u,2e(\tau)-1)\in \Jord(\pi)$, then $\ep((\rho,2e(\tau)-1),(\rho,2e(\tau)+1))=1$.
\end{itemize}
\end{lemma}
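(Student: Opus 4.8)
The plan is to deduce each of the four assertions from the M{\oe}glin--Tadi\'c classification of discrete series (\cite{Moe-Ex}, \cite{Moe-T}), combined with Casselman's square-integrability criterion and Frobenius reciprocity; all four are, in effect, statements about when a twist $\nu^{e}\rho$ of a self-dual cuspidal representation can occur as a cuspidal factor of $\mu^\ast(\pi)$.

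First I would settle the first item. Since $\tau\otimes\varphi$ is a subquotient of $\mu^\ast(\pi)$, the cuspidal representation $\tau$ lies in the cuspidal support of $\pi$. By the classification, $\pi$ is a subrepresentation of a representation of the form $\delta([\nu^{-x_1}\rho_1,\nu^{y_1}\rho_1])\times\cdots\times\delta([\nu^{-x_k}\rho_k,\nu^{y_k}\rho_k])\rtimes\pc$ with each $\rho_j$ self-dual cuspidal and all $x_j,y_j\in(1/2)\Z$; hence every cuspidal representation of a general linear group occurring in the cuspidal support of $\pi$ is of the form $\nu^{i}\rho_j$ with $\rho_j$ self-dual and $i\in(1/2)\Z$. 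Thus $\tau_u$ is self-dual and $e(\tau)\in(1/2)\Z$. That $e(\tau)>0$ is exactly Casselman's square-integrability criterion applied to the standard maximal parabolic subgroup whose Levi factor has general linear part $GL(n_{\tau_u},F)$: the corresponding exponent, which is $e(\tau)$, must be strictly positive. From now on write $\rho:=\tau_u$ and $e:=e(\tau)$, so $2e+1\in\Z_{\geq2}$ is an integer of the parity attached to $\rho$.

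For the second item I would work along the cuspidal line of $\rho$, invoking M{\oe}glin's computation of the $\nu^{e}\rho$-Jacquet module of a discrete series. Because $e>0$ and $\nu^{e}\rho\otimes\varphi$ occurs in $\mu^\ast(\pi)$, this computation forces $e=\tfrac{c-1}{2}$ for some $c\in\Jord_\rho(\pi)$; moreover, since $\tau$ is cuspidal, $e$ is the maximal $\rho$-line exponent appearing in the constituent $\tau\otimes\varphi$, so Frobenius reciprocity upgrades the occurrence to a genuine embedding $\pi\hookrightarrow\nu^{e}\rho\rtimes\varphi$, in which $\varphi$ is again a discrete series (this too is part of the classification; alternatively one checks that $\varphi$ cannot be non-tempered by Casselman's criterion and cannot be a tempered non-discrete-series by the reducibility analysis of \cite{Moe-T}). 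In particular $(2e+1,\rho)=(c,\rho)\in\Jord(\pi)$.

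Finally, for the last two items I would use the embedding $\pi\hookrightarrow\nu^{e}\rho\rtimes\varphi$ together with the definition of $\ep$ recalled in Section~2. If $e=\tfrac12$, the embedding reads $\pi\hookrightarrow\nu^{1/2}\rho\rtimes\varphi=\delta([\nu^{1/2}\rho,\nu^{1/2}\rho])\rtimes\varphi$, which, with the usual convention when $c\_$ does not exist, is precisely the condition $\ep((\rho,2))=1$. If $(2e-1,\rho)\in\Jord(\pi)$, then $c\_=2e-1$ for $c=2e+1$; the discrete series $\varphi$ produced by removing the factor $\nu^{e}\rho$ from $\pi$ is a legitimate admissible datum only if $(2e-1,\rho)$ is stripped off together with $(2e+1,\rho)$, and unwinding the inductive construction of discrete series --- at each stage of which one adjoins a pair of consecutive Jordan blocks carrying $\ep=1$ --- this means $\ep((\rho,2e-1),(\rho,2e+1))=1$; equivalently, iterating Frobenius reciprocity upgrades the embedding to $\pi\hookrightarrow\delta([\nu^{-(e-1)}\rho,\nu^{e}\rho])\rtimes\varphi'$ for some irreducible $\varphi'$, which is the defining condition for $\ep((\rho,2e-1),(\rho,2e+1))=1$. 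The one step needing genuine care is the passage from ``$\tau\otimes\varphi$ is a subquotient of $\mu^\ast(\pi)$'' to the embedding $\pi\hookrightarrow\nu^{e}\rho\rtimes\varphi$ with $\varphi$ a discrete series; this rests on M{\oe}glin's precise analysis of Jacquet modules of discrete series (that along each cuspidal line the outermost cuspidal layer of $\mu^\ast(\pi)$ is semisimple, multiplicity free, and realized by genuine quotients), which we take from \cite{Moe-Ex} and \cite{Moe-T}. Once this is granted, the rest is bookkeeping with Jordan blocks and the definition of $\ep$.
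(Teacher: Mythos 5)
The paper offers no proof of this lemma at all: it states that ``the facts which we collect in the following lemma are well known'' and simply cites \cite{Moe-Ex} and \cite{Moe-T}. Measured against that, your sketch is an acceptable expansion of the same citation, since at every essential point (that a cuspidal factor $\nu^{e}\rho$ with $e>0$ occurring in the outermost layer of $\mu^*(\pi)$ forces $2e+1\in\Jord_\rho(\pi)$, and the translation between embeddings and the $\epsilon$-conditions) you defer to exactly the references the paper invokes; the first item via the shape of the cuspidal support and Casselman's criterion is correct and routine. The one step you should be more careful about is the ``upgrade'' from ``$\tau\otimes\varphi$ is a subquotient of $\mu^*(\pi)$'' to an embedding $\pi\hookrightarrow\nu^{e}\rho\rtimes\varphi$: Frobenius reciprocity produces such an embedding only from an irreducible \emph{quotient} of the Jacquet module, not from an arbitrary subquotient, and your justification (that the outermost cuspidal layer is semisimple, multiplicity free, and realized by genuine quotients) is itself one of the nontrivial inputs being imported rather than something you establish --- indeed the multiplicity-one statements are only proved in the subsequent lemmas of this section. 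It is worth noting that those later lemmas deliberately run the argument in the opposite direction: they start from the known embeddings $\pi\hookrightarrow\nu^{(a-1)/2}\rho\rtimes\pi^{(\rho,a\downarrow a-2)}$ supplied by \cite{T-temp} and \cite{Jn-temp} and then compute $\mu^*$ of the induced representation to pin down $\varphi$, which avoids ever having to convert a Jacquet-module constituent into an embedding. Since the present lemma is stated without proof in the paper, your proposal is consistent with the paper's treatment, but the embedding step should be attributed explicitly to the classification results rather than to Frobenius reciprocity.
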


First we shall consider the  situation when
$$
2\in \Jord\nolimits_{\rho}(\pi) \text{ \ and \ }\ep((\rho,2))=1.
$$
In  this case $\pi^{(\rho,2\downarrow\emptyset)}$ or $\pi^{(\rho,2\downarrow0)}$} will denote the irreducible square integrable representation determined by an admissible triple
$$
(\Jord\nolimits_{\rho}(\pi) \backslash \{(\rho,2)\},\ep',\pc),
$$
where $\ep'$ denotes the partially defined function which one gets by restriction of $\ep$ to $\Jord_{\rho}(\pi) \backslash \{(\rho,2)\}$.

\begin{lemma} Let $\rho$ be an irreducible self-dual representation of a general linear group. Suppose that $2\in \Jord_{\rho}(\pi)$, $\ep((\rho,2))=1$ and $\nu^{1/2}\rho\o\varphi\leq s_{top}(\pi)$
for some irreducible $\varphi$. Then $\varphi\cong\pi^{\d(\rho,2\downarrow 0)}$
and the multiplicity of $\nu^{1/2}\rho\o\varphi$ in  $s_{top}(\pi)$ is one.
\end{lemma}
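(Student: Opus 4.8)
The plan is to pin down the part of $\mu^*(\pi)$ supported on constituents of the form $\nu^{1/2}\rho\otimes(-)$ completely, by squeezing it between a lower bound produced by an explicit embedding and an upper bound produced by the structural formula of Lemma~\ref{osn}. Put $\pi'=\pi^{\d(\rho,2\downarrow 0)}$; by its definition this is a discrete series with the same partial cuspidal support as $\pi$ and with $\Jord(\pi')=\Jord(\pi)\setminus\{(\rho,2)\}$, so in particular $(\rho,2)\notin\Jord(\pi')$. Since $2\in\Jord_\rho(\pi)$ and $\ep((\rho,2))=1$, the classification of discrete series (\cite{Moe-Ex}, \cite{Moe-T}) provides the embedding $\pi\hookrightarrow\delta([\nu^{1/2}\rho,\nu^{1/2}\rho])\rtimes\pi'=\nu^{1/2}\rho\rtimes\pi'$. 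Frobenius reciprocity then gives $\nu^{1/2}\rho\otimes\pi'\le\mu^*(\pi)$, and because $\nu^{1/2}\rho$ is cuspidal this already shows $\nu^{1/2}\rho\otimes\pi'\le s_{top}(\pi)$; so the asserted $\varphi$ exists and $\pi'$ is the candidate.

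For the upper bound I would apply Lemma~\ref{osn} with $a=b=1/2$, using $\nu^{1/2}\rho=\delta([\nu^{1/2}\rho,\nu^{1/2}\rho])$ and the self-duality $\tilde\rho\cong\rho$. Writing $\mu^*(\pi')=\sum_{\tau,\sigma'}\tau\otimes\sigma'$, the three pairs $(i,j)$ that survive in the formula, namely $(-\tfrac12,-\tfrac12)$, $(-\tfrac12,\tfrac12)$ and $(\tfrac12,\tfrac12)$, give
$$\mu^*(\nu^{1/2}\rho\rtimes\pi')=\sum_{\tau,\sigma'}\Big(\big((\nu^{1/2}\rho+\nu^{-1/2}\rho)\times\tau\big)\otimes\sigma' + \tau\otimes(\nu^{1/2}\rho\rtimes\sigma')\Big).$$
Now I would isolate the constituents of the form $\nu^{1/2}\rho\otimes(-)$. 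In the first sum, comparison of degrees on the $GL$-side forces $\tau=1$ (the unit of $R$), and then $\tau\otimes\sigma'\le\mu^*(\pi')$ forces $\sigma'=\pi'$; since moreover the $\nu^{-1/2}\rho$ summand never yields $\nu^{1/2}\rho$, the only contribution here is $\nu^{1/2}\rho\otimes\pi'$, with multiplicity one. In the second sum, a constituent $\nu^{1/2}\rho\otimes(-)$ forces $\tau=\nu^{1/2}\rho$, hence $\nu^{1/2}\rho\otimes\sigma'\le\mu^*(\pi')$, hence $\nu^{1/2}\rho\otimes\sigma'\le s_{top}(\pi')$ by cuspidality; but then the preceding lemma, applied to the discrete series $\pi'$ in place of $\pi$, would force $(\rho,2)\in\Jord(\pi')$, which is impossible. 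Hence $\nu^{1/2}\rho\otimes\pi'$ is the unique constituent of $\mu^*(\nu^{1/2}\rho\rtimes\pi')$ of the form $\nu^{1/2}\rho\otimes(-)$, and it has multiplicity one.

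To conclude, since $\pi$ is a subquotient of $\nu^{1/2}\rho\rtimes\pi'$ one has $\mu^*(\pi)\le\mu^*(\nu^{1/2}\rho\rtimes\pi')$ in $R\otimes R(G)$, so the $\nu^{1/2}\rho\otimes(-)$ part of $s_{top}(\pi)$ is at most $\nu^{1/2}\rho\otimes\pi'$; combined with the lower bound from the embedding it equals exactly $\nu^{1/2}\rho\otimes\pi'$. Consequently any irreducible $\varphi$ with $\nu^{1/2}\rho\otimes\varphi\le s_{top}(\pi)$ satisfies $\varphi\cong\pi^{\d(\rho,2\downarrow 0)}$, and it appears with multiplicity one.

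The step I expect to require the most care is the first one: extracting from the hypothesis $\ep((\rho,2))=1$ the precise embedding $\pi\hookrightarrow\nu^{1/2}\rho\rtimes\pi^{\d(\rho,2\downarrow 0)}$ and the fact that $\pi^{\d(\rho,2\downarrow 0)}$ is itself a discrete series with Jordan set $\Jord(\pi)\setminus\{(\rho,2)\}$. Once that is in hand, the computation of $\mu^*(\nu^{1/2}\rho\rtimes\pi')$ and the bookkeeping that isolates the $\nu^{1/2}\rho\otimes(-)$ constituents are routine, with the appeal to the previous lemma for $\pi'$ being exactly what disposes of the otherwise-dangerous second sum.
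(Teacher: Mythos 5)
Your proof is correct and follows essentially the same route as the paper's: establish the embedding $\pi\hookrightarrow\nu^{1/2}\rho\rtimes\pi^{(\rho,2\downarrow0)}$ from $\ep((\rho,2))=1$, bound $\mu^*(\pi)$ by $\mu^*(\nu^{1/2}\rho\rtimes\pi^{(\rho,2\downarrow0)})$, split the structural formula into the ``$GL$-side'' and ``classical-side'' contributions, and rule out the classical-side contribution by observing that it would put $(\rho,2)$ into $\Jord(\pi^{(\rho,2\downarrow0)})$. Your bookkeeping of the three $(i,j)$-pairs and the explicit invocation of the preceding lemma to kill the second sum merely spell out steps the paper states more tersely, and your multiplicity-one argument via the upper bound is the same as the paper's.
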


\begin{proof}
Since $\ep((\rho,2))=1$, we have
$$
\pi\h \nu^{1/2}\rho\r \pi^{(\rho,2\downarrow0)}
$$
(see Lemma 9.1 of \cite{T-temp}, or \cite{Jn-temp}). Thus
$$
\nu^{1/2}\rho\o\varphi\leq \mu^*(\nu^{1/2}\rho\r\pi^{(\rho,2\downarrow0)}),
$$
which directly implies (by the formula for $\mu^*$)
$$
\nu^{1/2}\rho\o\varphi\leq  (\nu^{1/2}\rho\o1)\r \mu^*(\pi^{(\rho,2\downarrow0)})+(1\o\nu^{1/2}\rho)\r\mu^*(\pi^{(\rho,2\downarrow0)}).
$$
We have two possibilities. The first one is $\nu^{1/2}\rho\o\varphi\leq  (\nu^{1/2}\rho\o1)\r \mu^*(\pi^{(\rho,2\downarrow0)})$, which implies $\varphi\cong \pi^{(\rho,2\downarrow0)}$, and the second one is $\nu^{1/2}\rho\o\varphi\leq  (1\o\nu^{1/2}\rho)\r\mu^*(\pi^{(\rho,2\downarrow0)})$, which directly implies that $(\rho,2)$ is in the Jordan block of $\pi^{(\rho,2\downarrow0)}$, a contradiction. Consequently, $\varphi\cong \pi^{(\rho,2\downarrow0)}$.

Furthermore, the assumption $2 \nu^{1/2}\rho\o \pi^{(\rho,2\downarrow0)} \leq s_{top}(\pi)$ would give $2\nu^{1/2}\rho\o \pi^{(\rho,2\downarrow0)} \leq \nu^{1/2}\rho\o \pi^{(\rho,2\downarrow0)}$, which is impossible. This  completes the proof.
\end{proof}

Now  we shall consider the situation when
 $$
 \text{$a\geq 3$, $a\in \Jord\nolimits_{\rho}(\pi)$ and $a-2\not\in \Jord\nolimits_{\rho}(\pi).$}
 $$
Then $\pi^{(\rho,a\downarrow a-2)}$ will denote the irreducible square integrable representation determined by the admissible triple given as follows: the Jordan blocks are obtained by replacing, in $\Jord_{\rho}(\pi)$, the representation $(\rho,a)$ by $(\rho,a-2)$  and keeping all other representations unchanged. The new partially defined function is obtained from the old one by replacing everywhere $(\rho,a)$ by $(\rho,a-2)$, while the partial cuspidal support remains unchanged.

\begin{lemma} Let $\rho$ be an irreducible self-dual representation of a general linear group. Suppose that $a\geq 3$, $a\in \Jord_{\rho}(\pi)$ and $a-2\not\in \Jord_{\rho}(\pi)$ and
$$
\nu^{(a-1)/2}\rho\o\varphi\leq s_{top}(\pi)
$$
for some irreducible $\varphi$. Then $
\varphi\cong\pi^{(\rho,a\downarrow a-2)}$, and the multiplicity of $\nu^{(a-1)/2}\rho\o\varphi$ in  $s_{top}(\pi)$ is one.
\end{lemma}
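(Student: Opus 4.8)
The strategy mirrors the proof of the previous lemma (the $a=2$, $\e((\rho,2))=1$ case), only now using the embedding associated with adding a pair of consecutive Jordan blocks in the M{\oe}glin--Tadi\'c classification. Since $a\geq 3$, $a\in\Jord_\rho(\pi)$ and $a-2\notin\Jord_\rho(\pi)$, the value $\e((\rho,a-2),(\rho,a))$ is not defined unless $a-2\in\Jord_\rho$, so the relevant fact is instead that $\pi$ is obtained from $\pi^{(\rho,a\downarrow a-2)}$ by the elementary step of the classification which adjoins the pair; equivalently (see \cite{Moe-Ex}, \cite{Moe-T}, or Lemma~9.1 and the surrounding discussion in \cite{T-temp}) we have an embedding
$$
\pi\h \d([\nu^{-(a-3)/2}\rho,\nu^{(a-1)/2}\rho])\r\pi^{(\rho,a\downarrow a-2)}.
$$
First I would record this embedding and then apply Frobenius reciprocity together with the structural formula (Lemma~\ref{osn}) for $\mu^\ast$ of $\d([\nu^{-(a-3)/2}\rho,\nu^{(a-1)/2}\rho])\r\pi^{(\rho,a\downarrow a-2)}$.

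Next I would extract from that structural formula exactly the irreducible constituents of the form $\nu^{(a-1)/2}\rho\o(\cdot)$. Writing $b=(a-1)/2$ and $b'=-(a-3)/2$, so the segment is $[\nu^{b'}\rho,\nu^b\rho]$ with $b$ its largest exponent, the term $M^\ast(\d([\nu^{b'}\rho,\nu^b\rho]))\r\mu^\ast(\pi^{(\rho,a\downarrow a-2)})$ produces a first-tensor-factor equal to $\nu^b\rho$ only in two ways: either $\nu^b\rho$ comes from $M^\ast$ of the full segment with the rest of the segment going to the $G$-side, or $\nu^b\rho$ comes entirely from a constituent $\nu^b\rho\o(\cdot)$ in $\mu^\ast(\pi^{(\rho,a\downarrow a-2)})$. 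The first possibility forces $\varphi\cong\pi^{(\rho,a\downarrow a-2)}$ (the segment re-assembles on the $G$-part, giving back $\pi$ on that factor up to the known Jacquet-module data). The second possibility would force $(\rho,a)\in\Jord_\rho(\pi^{(\rho,a\downarrow a-2)})$, since an exponent $\nu^b\rho=\nu^{(a-1)/2}\rho$ appearing as a top Jacquet-module exponent of a discrete series $\varphi'$ forces $a\in\Jord_\rho(\varphi')$ by the preceding Lemma; but by construction $a\notin\Jord_\rho(\pi^{(\rho,a\downarrow a-2)})$ (it was replaced by $a-2$, and $a-2\ne a$), a contradiction. Hence $\varphi\cong\pi^{(\rho,a\downarrow a-2)}$.

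For the multiplicity-one assertion I would argue exactly as in the previous lemma: if $2\,\nu^{(a-1)/2}\rho\o\pi^{(\rho,a\downarrow a-2)}$ were $\le s_{top}(\pi)$, then via the embedding above and transitivity of Jacquet modules this would force two copies of $\nu^{(a-1)/2}\rho\o\pi^{(\rho,a\downarrow a-2)}$ inside the single term $(\nu^{b}\rho\o 1)\r\mu^\ast(\pi^{(\rho,a\downarrow a-2)})$ of the decomposition (the other "cross" terms being excluded by the cuspidal-support/Jordan-block analysis of the previous paragraph), whereas that term contributes $\nu^{(a-1)/2}\rho\o\pi^{(\rho,a\downarrow a-2)}$ with multiplicity exactly one because $\pi^{(\rho,a\downarrow a-2)}$ occurs in $\mu^\ast(\pi^{(\rho,a\downarrow a-2)})$ — i.e.\ as $\mathbf 1\o\pi^{(\rho,a\downarrow a-2)}$ — with multiplicity one. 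This is impossible, so the multiplicity is one.

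\textbf{Main obstacle.} The delicate point is the justification that the only contributions to the $\nu^{(a-1)/2}\rho\o(\cdot)$ part of $\mu^\ast$ are the two described, and in particular ruling out intermediate possibilities in which part of the segment $[\nu^{b'}\rho,\nu^b\rho]$ lands on the $GL$-side and is rearranged to expose $\nu^b\rho$ as the leading factor while the remainder interacts non-trivially with $\mu^\ast(\pi^{(\rho,a\downarrow a-2)})$. Here one must use that $b=(a-1)/2$ is the unique largest exponent of the segment together with the constraint, coming from the previous Lemma and from the admissibility of $\Jord(\pi^{(\rho,a\downarrow a-2)})$, that $\pi^{(\rho,a\downarrow a-2)}$ has no exponent $\ge (a-1)/2$ attached to $\rho$ in its top Jacquet module (since $a-2<a$ and $a\notin\Jord_\rho(\pi^{(\rho,a\downarrow a-2)})$); this is what collapses all mixed terms. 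Making this bookkeeping precise — essentially a careful reading of $M^\ast(\d([\nu^{b'}\rho,\nu^b\rho]))$ against the known shape of $\mu^\ast(\pi^{(\rho,a\downarrow a-2)})$ — is the technical heart of the argument.
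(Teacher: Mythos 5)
Your overall strategy (embed $\pi$ into an induced representation, apply the structural formula for $\mu^\ast$, isolate the constituents with first factor $\nu^{(a-1)/2}\rho$, and kill the unwanted case via the Jordan--block constraint from the first lemma) is the right one and is exactly what the paper does. However, the key input you start from is wrong: the embedding
$$
\pi\h \d([\nu^{-(a-3)/2}\rho,\nu^{(a-1)/2}\rho])\r\pi^{(\rho,a\downarrow a-2)}
$$
cannot hold. Since $\Jord(\pi^{(\rho,a\downarrow a-2)})$ is obtained from $\Jord(\pi)$ by replacing $(\rho,a)$ with $(\rho,a-2)$, the representation $\pi^{(\rho,a\downarrow a-2)}$ lives on $G_{n-n_\rho}$, so inducing it with the segment representation $\d([\nu^{-(a-3)/2}\rho,\nu^{(a-1)/2}\rho])$, which has $a-1$ terms, produces a representation of $G_{n+(a-2)n_\rho}$; this equals $G_n$ only for $a=2$, whereas here $a\geq 3$. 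You have also misidentified the classification step: in this lemma only one Jordan block changes (it is shifted down by $2$), no pair of consecutive blocks is adjoined --- that is the situation of the \emph{next} lemma, where $a-2\in\Jord_{\rho}(\pi)$ and $\e((\rho,a-2),(\rho,a))=1$. The correct input (Lemma 8.1 of \cite{T-temp}) is the much simpler embedding
$$
\pi\h \nu^{(a-1)/2}\rho\r\pi^{(\rho,a\downarrow a-2)},
$$
with a single cuspidal representation on the $GL$-side.

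Once the correct embedding is in place, the ``main obstacle'' you describe disappears: since the inducing $GL$-factor is cuspidal, $M^\ast(\nu^{(a-1)/2}\rho)=1\o\nu^{(a-1)/2}\rho+\nu^{(a-1)/2}\rho\o 1+\nu^{-(a-1)/2}\rho\o 1$ and there are no mixed terms to control. The two cases are then exactly as in your second paragraph: either $\nu^{(a-1)/2}\rho$ comes from the $GL$-factor, forcing $\varphi\cong\pi^{(\rho,a\downarrow a-2)}$, or $\nu^{(a-1)/2}\rho\o\varphi'\leq\mu^\ast(\pi^{(\rho,a\downarrow a-2)})$ for some $\varphi'$, which by the first lemma of the section forces $(\rho,a)\in\Jord(\pi^{(\rho,a\downarrow a-2)})$, a contradiction; the multiplicity-one claim follows because $(\nu^{(a-1)/2}\rho\o 1)\r\mu^\ast(\pi^{(\rho,a\downarrow a-2)})$ contributes $\nu^{(a-1)/2}\rho\o\pi^{(\rho,a\downarrow a-2)}$ exactly once. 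So the proof can be repaired by replacing your embedding with the correct one, but as written the central step fails.
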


\begin{proof}
Lemma 8.1 of \cite{T-temp} gives $
\pi\h \nu^{(a-1)/2}\rho\r \pi^{(\rho,a\downarrow a-2)}$. Now the rest of the proof runs in the same way as in the proof of the previous lemma.
\end{proof}

At the end, we shall consider the situation
$$
\text{$a, a-2\in \Jord\nolimits_{\rho}(\pi)$ and $\ep((\rho,a-2),(\rho,a))=1$.}
$$

Here we shall need the parametrization of tempered duals. We choose to work with the one from \cite{Jn-temp}.

Jantzen parameters are very similar to parameters of the square integrable representations, but here parameters are quadruples, where the additional parameter is the multiplicity function on Jordan blocks. However, we can interpret these parameters as triples, by interpreting Jordan blocks $\Jord(\tau)$ as multisets. When we consider the set determined by $\Jord(\tau)$ (this is the case when one considers the partially defined function attached to $\tau$ in \cite{Jn-temp}), then it will be determined by $|\Jord(\tau)|$.

Let us denote by $\pi_0$ the irreducible discrete series determined by admissible triple given in the following way: the Jordan blocks are obtained by removing $(\rho,a)$ and $(\rho,a-2)$ in $\Jord_{\rho}(\pi)$, and the new partially defined function is obtained from the old one by restriction, while the partial cuspidal support remains unchanged.

Consider now two inequivalent tempered irreducible subrepresentations of
$$
\d([\nu^{-\frac{a-3}{2}}\rho, \nu^{\frac{a-3}{2}}\rho])\r\pi_0=\tau_1+\tau_{-1}.
$$
For precisely one $i_{0} \in \{ 1, -1 \}$, we have
$$
\pi\h \nu^{(a-1)/2}\rho \r\tau_{i_0}.
$$
Now we shall discuss the Jantzen parameters of  representations $\tau_i$.
The partial cuspidal supports of both $\tau_i$'s are  the same as of $\pi_0$ (and $\pi$).
Furthermore, one gets Jordan blocks of both $\tau_i$'s  by adding twice $(\rho,a-2)$ to $\Jord(\pi_0)$. In other words, one gets $\Jord(\tau_i)$ from $\Jord(\pi)$ by replacing
$(\rho,a)$ by $(\rho,a-2)$ (not forgetting that we have now the multiplicity two of $(\rho,a-2)$).

One has two possibilities for the partially defined functions corresponding to representations $\tau_i$. Let us denote by $\e'$ the partially defined function on $|\Jord(\tau)|$ which one gets from $\ep$ replacing $(\rho,a)$ by $(\rho,a-2)$ everywhere in the definition of $\ep$. Now for precisely one of the $\tau_i$'s as above, the partially defined function of $\tau_i$ is equal to $\e'$.
We denote $\tau_i$ corresponding to this partially defined function by $\pi^{(\rho,a \downarrow a-2)}$.

Now Corollary 3.2.3 of \cite{Jn-temp} implies $\pi \h \nu^{(a-1)/2}\rho \r \pi^{(\rho,a \downarrow a-2)}$.

\begin{lemma} Let $\rho$ be an irreducible self-dual representation of a general linear group. Suppose that $a\geq 3$, $a, a-2\in \Jord_{\rho}(\pi)$, $\ep((\rho,a-2),(\rho,a))=1$ and
$$
\nu^{(a-1)/2}\rho\o\varphi\leq s_{top}(\pi)
$$
for some irreducible $\varphi$. Then $\varphi\cong \pi^{(\rho,a \downarrow a-2)}.$
The multiplicity of $\nu^{(a-1)/2}\rho\o\varphi$ in  $s_{top}(\pi)$ is one.
\end{lemma}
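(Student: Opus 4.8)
The plan is to follow the template established by the two preceding lemmas, since we have already recorded the key embedding $\pi \h \nu^{(a-1)/2}\rho \r \pi^{(\rho,a\downarrow a-2)}$ just before the statement. From this embedding and Frobenius reciprocity we get $\nu^{(a-1)/2}\rho \o \varphi \leq \mu^*(\nu^{(a-1)/2}\rho \r \pi^{(\rho,a\downarrow a-2)})$, and the structural formula (Lemma \ref{osn}, i.e. $\mu^*(\pi_1\rtimes\sigma)=M^*(\pi_1)\rtimes\mu^*(\sigma)$ applied with $\pi_1=\nu^{(a-1)/2}\rho$) shows that
\[
\mu^*(\nu^{(a-1)/2}\rho \r \pi^{(\rho,a\downarrow a-2)}) \leq (\nu^{(a-1)/2}\rho\o 1)\r\mu^*(\pi^{(\rho,a\downarrow a-2)}) + (\nu^{-(a-1)/2}\rho\o 1)\r\mu^*(\pi^{(\rho,a\downarrow a-2)}) + (1\o\nu^{(a-1)/2}\rho)\r\mu^*(\pi^{(\rho,a\downarrow a-2)}),
\]
the three summands coming from $M^*(\nu^{(a-1)/2}\rho) = \nu^{(a-1)/2}\rho\o 1 + \nu^{-(a-1)/2}\rho\o 1 + 1\o\nu^{(a-1)/2}\rho$ (here $\tilde\rho\cong\rho$ since $\rho$ is self-dual). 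First I would rule out the middle term: it would force $\nu^{(a-1)/2}\rho\cong\nu^{-(a-1)/2}\rho$, impossible for $a\geq 3$. For the third term, having $\nu^{(a-1)/2}\rho\o\varphi \leq (1\o\nu^{(a-1)/2}\rho)\r\mu^*(\pi^{(\rho,a\downarrow a-2)})$ would produce $\nu^{(a-1)/2}\rho$ in the cuspidal support of a Jacquet module of $\pi^{(\rho,a\downarrow a-2)}$ attached to a segment ending at $\nu^{(a-1)/2}\rho$; but the admissible triple for $\pi^{(\rho,a\downarrow a-2)}$ has $a$ replaced by $a-2$, and the largest exponent of $\rho$-type occurring in $s_{top}$ of a discrete series with Jordan block bounded by $a-2$ is at most $(a-3)/2 < (a-1)/2$, a contradiction. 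Hence only the first summand survives, which gives $\varphi \cong \pi^{(\rho,a\downarrow a-2)}$.

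For the multiplicity-one assertion I would argue exactly as in the proof of the preceding lemma: if $2\,\nu^{(a-1)/2}\rho\o\pi^{(\rho,a\downarrow a-2)} \leq s_{top}(\pi)$, then combined with $\pi\h\nu^{(a-1)/2}\rho\r\pi^{(\rho,a\downarrow a-2)}$ and the above computation of $\mu^*$, the only contribution to $\nu^{(a-1)/2}\rho\o\pi^{(\rho,a\downarrow a-2)}$ in $\mu^*(\nu^{(a-1)/2}\rho\r\pi^{(\rho,a\downarrow a-2)})$ is the single copy coming from $(\nu^{(a-1)/2}\rho\o 1)\r(1\o\pi^{(\rho,a\downarrow a-2)})$, so the multiplicity in $s_{top}(\pi)$ is at most one, while it is clearly at least one from the embedding.

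The main obstacle is the careful exclusion of the third summand $(1\o\nu^{(a-1)/2}\rho)\r\mu^*(\pi^{(\rho,a\downarrow a-2)})$. In the two earlier lemmas this was immediate because replacing $(\rho,a)$ by $(\rho,a-2)$ (or deleting $(\rho,2)$) visibly removes $(\rho,a)$ from the Jordan block, so $\nu^{(a-1)/2}\rho$ cannot reappear; here $(\rho,a-2)$ occurs with \emph{multiplicity two} in $\Jord(\pi^{(\rho,a\downarrow a-2)})$ (the representation $\pi^{(\rho,a\downarrow a-2)}$ is one of the tempered $\tau_i$'s, not a discrete series of the naive shape), so one must be slightly more careful and invoke the description of which exponents appear in Jacquet modules of these tempered representations — concretely, that the top $\rho$-exponent in $s_{GL}$ of $\d([\nu^{-(a-3)/2}\rho,\nu^{(a-3)/2}\rho])\r\pi_0$ is $(a-3)/2$. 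Once this bound is in place, the same contradiction as before closes the argument, and the multiplicity claim follows routinely.
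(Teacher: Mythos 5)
Your overall plan tracks the paper's strategy: use the embedding $\pi\hookrightarrow\nu^{(a-1)/2}\rho\rtimes\pi^{(\rho,a\downarrow a-2)}$, decompose $M^*(\nu^{(a-1)/2}\rho)\rtimes\mu^*(\pi^{(\rho,a\downarrow a-2)})$, and rule out every summand except $(\nu^{(a-1)/2}\rho\otimes 1)\rtimes\mu^*(\pi^{(\rho,a\downarrow a-2)})$. Dropping the middle term $(\nu^{-(a-1)/2}\rho\otimes 1)\rtimes\cdots$ for the reason you give is fine (and the paper simply omits it). The issue is the exclusion of the third summand.

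Your concrete claim---that the top $\rho$-exponent appearing in $s_{GL}\bigl(\delta([\nu^{-(a-3)/2}\rho,\nu^{(a-3)/2}\rho])\rtimes\pi_0\bigr)$ is $(a-3)/2$---is false in general, and so is the formulation that $\pi^{(\rho,a\downarrow a-2)}$ ``has Jordan block bounded by $a-2$.'' Nothing in the hypotheses prevents $\Jord_\rho(\pi_0)$ from containing elements $b>a$ (e.g.\ $\Jord_\rho(\pi)=\{a-2,a,a+2\}$ gives $\Jord_\rho(\pi_0)=\{a+2\}$), and in that case $s_{GL}(\pi_0)$, hence also $s_{GL}(\delta\rtimes\pi_0)$, contains $\rho$-exponents strictly larger than $(a-3)/2$. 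The bound you invoke therefore does not hold. Moreover, even the softer version of your argument---that $(\rho,a)\notin\Jord(\pi^{(\rho,a\downarrow a-2)})$ forces $\nu^{(a-1)/2}\rho\otimes\cdots\not\leq s_{top}(\pi^{(\rho,a\downarrow a-2)})$---silently applies the ``well-known facts'' lemma of the section (stated and cited for discrete series) to the \emph{tempered} representation $\pi^{(\rho,a\downarrow a-2)}$, which is exactly the extension the paper takes pains to avoid.

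The way the paper closes this step is to use the embedding $\pi^{(\rho,a\downarrow a-2)}\hookrightarrow\delta([\nu^{-(a-3)/2}\rho,\nu^{(a-3)/2}\rho])\rtimes\pi_0$ and the formula for $M^*$ of that segment: since \emph{that} segment only produces exponents of absolute value $\leq(a-3)/2$, any occurrence of the cuspidal $\nu^{(a-1)/2}\rho$ on the left of $\mu^*(\pi^{(\rho,a\downarrow a-2)})$ must be supplied by the $\mu^*(\pi_0)$ factor, i.e.\ $\nu^{(a-1)/2}\rho\otimes\varphi'\leq\mu^*(\pi_0)$ for some $\varphi'$. Then the Jordan-block constraint is applied to the genuine discrete series $\pi_0$, giving $(\rho,a)\in\Jord(\pi_0)$, which is false. (It is the constraint on the short segment $\delta$ that is bounded by $(a-3)/2$, not the $GL$-Jacquet module of the whole induced representation.) You should replace your bound with this factorization through $\pi_0$. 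For the multiplicity-one claim, your count of contributions in $\mu^*(\nu^{(a-1)/2}\rho\rtimes\pi^{(\rho,a\downarrow a-2)})$ is a valid alternative to the paper's argument via multiplicity-one-ness of $\delta\rtimes\pi_0$, but it depends on the third-summand exclusion, so it inherits the same gap until that is fixed.
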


\begin{proof}We know that
$$
\nu^{(a-1)/2}\rho\o\varphi\leq \mu^*(\pi)\leq \mu^*(\nu^{(a-1)/2}\r\pi^{(\rho,a \downarrow a-2)}),
$$
which directly implies
$$
\nu^{(a-1)/2}\rho\o\varphi\leq  (\nu^{(a-1)/2}\rho\o1)\r \mu^*(\pi^{(\rho,a \downarrow a-2)})+(1\o\nu^{(a-1)/2}\rho) \r\mu^*(\pi^{(\rho,a \downarrow a-2)}).
$$
Again there are two possibilities. The first one is $\nu^{(a-1)/2}\rho\o\varphi\leq  (\nu^{(a-1)/2}\rho\o1)\r \mu^*(\pi^{(\rho,a \downarrow a-2)})$, which implies $\varphi\cong \pi^{(\rho,a \downarrow a-2)}$.

The remaining possibility is $\nu^{(a-1)/2}\rho\o\varphi\leq  (1\o\nu^{(a-1)/2}\rho)\r\mu^*(\pi^{(\rho,a \downarrow a-2)})$. This implies
$$
(\nu^{(a-1)/2}\rho\o\varphi)\leq (1\o\nu^{(a-1)/2}\rho)\t M^*(\d([\nu^{-\frac{a-3}{2}}\rho, \nu^{\frac{a-3}{2}}\rho]))\r\mu^*(\pi_0).
 $$
The formula for $M^*(\d([\nu^{-\frac{a-3}{2}}\rho, \nu^{\frac{a-3}{2}}\rho]))$ gives
$$
\nu^{(a-1)/2}\rho\o \varphi'\leq \mu^*(\pi_0)
$$
for some $\varphi'$, which further implies that $(\rho,a)$ is in the Jordan block of $\pi_0$, which is not the case. Thus, we got a contradiction. Consequently, $\varphi\cong \pi^{(\rho,a \downarrow a-2)}$. The assumption $2\,\nu^{(a-1)/2}\rho\o\varphi\leq s_{top}(\pi)$ would imply that $$\nu^{(a-1)/2}\rho\o\d([\nu^{-\frac{a-3}{2}}\rho, \nu^{\frac{a-3}{2}}\rho])\r \pi'$$ is not a multiplicity one representation, which is impossible since $\d([\nu^{-\frac{a-3}{2}}\rho$, $\nu^{\frac{a-3}{2}}\rho])\r \pi'$ is a multiplicity one representation.

This completes the proof.
\end{proof}

From the above four lemmas we obtain the following

\begin{theorem} Let $\pi$ be an irreducible square integrable representation of a classical group. Then
$$
s_{top}(\pi)=
\sum \nu^{(a-1)/2}\rho\o \pi^{(\rho,a \downarrow a-2)},
$$
where the sum runs over all $(\rho,a)\in \Jord(\pi)$ which satisfy the following two conditions:
\begin{align*}
a-2\in \Jord\nolimits_{\rho}(\pi) & \Rightarrow \ep((\rho,a-2),(\rho,a))=1; \\
a=2& \Rightarrow \ep((\rho,2))=1.
\end{align*}
\end{theorem}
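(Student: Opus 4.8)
The plan is to assemble the theorem from the four preceding lemmas by checking two things: that every term $\nu^{(a-1)/2}\rho\otimes\pi^{(\rho,a\downarrow a-2)}$ with $(\rho,a)$ satisfying the stated conditions does occur in $s_{top}(\pi)$ with multiplicity one, and that nothing else occurs. First I would fix an irreducible constituent $\tau\otimes\varphi$ of $s_{top}(\pi)$. Writing $\tau=\nu^{e(\tau)}\tau_u$ with $\tau_u$ unitarizable, the first lemma of this section tells us that $\tau_u$ is self-dual, $e(\tau)\in(1/2)\mathbb Z_{>0}$, that $(\tau_u,2e(\tau)+1)\in\Jord(\pi)$, and — crucially — that either $2e(\tau)-1\notin\Jord_{\tau_u}(\pi)$, or else $\epsilon((\tau_u,2e(\tau)-1),(\tau_u,2e(\tau)+1))=1$, and that if $e(\tau)=1/2$ then $\epsilon((\tau_u,2))=1$. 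Setting $\rho=\tau_u$ and $a=2e(\tau)+1\ge 2$, these are exactly the hypotheses of one of Lemmas~8.3, 8.5, 8.7 (the three cases $a=2$; $a\ge 3$ with $a-2\notin\Jord_\rho(\pi)$; $a\ge 3$ with $a-2\in\Jord_\rho(\pi)$ and the $\epsilon$-condition), and each of those lemmas concludes $\varphi\cong\pi^{(\rho,a\downarrow a-2)}$ and that the multiplicity is one. This shows $s_{top}(\pi)\le\sum\nu^{(a-1)/2}\rho\otimes\pi^{(\rho,a\downarrow a-2)}$, the sum taken over $(\rho,a)$ as in the statement.

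For the reverse inequality I would argue that each admissible $(\rho,a)$ actually contributes. The embeddings recalled just before each lemma — $\pi\hookrightarrow\nu^{(a-1)/2}\rho\rtimes\pi^{(\rho,a\downarrow a-2)}$ when $a-2\notin\Jord_\rho(\pi)$ (via Lemma~8.1 or 9.1 of \cite{T-temp}), and $\pi\hookrightarrow\nu^{(a-1)/2}\rho\rtimes\pi^{(\rho,a\downarrow a-2)}$ via Corollary~3.2.3 of \cite{Jn-temp} in the remaining case — show by Frobenius reciprocity that $\nu^{(a-1)/2}\rho\otimes\pi^{(\rho,a\downarrow a-2)}$ lies in $\mu^\ast(\pi)$. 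It remains only to see that $\nu^{(a-1)/2}\rho$ is \emph{cuspidal}, which it is, so this constituent lies in $s_{top}(\pi)$ and not merely in $\mu^\ast(\pi)$. Combining this with the multiplicity-one statements from the lemmas gives the claimed equality, term by term.

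The one genuine subtlety, which I would address carefully, is whether two distinct admissible pairs $(\rho,a)$ and $(\rho',a')$ can produce the \emph{same} summand, or whether the summands listed are automatically distinct; since $\nu^{(a-1)/2}\rho$ determines both $\rho$ (its unitarizable part, which is cuspidal) and $a=2e+1$ (from the exponent), the pairs are recovered from the left tensor factor, so the summands are pairwise distinct and no collapsing occurs. I would also note that when $a=2$, the constituent $\nu^{1/2}\rho\otimes\pi^{(\rho,2\downarrow 0)}$ uses the notation $\pi^{(\rho,2\downarrow\emptyset)}=\pi^{(\rho,2\downarrow 0)}$ introduced before Lemma~8.3, so no special case is needed in the final formula.

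The main obstacle is organizational rather than technical: one must verify that the hypotheses of the first lemma of this section exhaust exactly the three cases covered by Lemmas~8.3, 8.5 and 8.7, with no gap and no overlap — in particular that the dichotomy ``$a-2\in\Jord_\rho(\pi)$ or not'' together with the $\epsilon$-conditions partitions the constituents correctly, and that the constraint $e(\tau)>0$ (so $a\ge 2$) is exactly what rules out everything else. Once that bookkeeping is in place, the theorem follows immediately by summing the four lemmas.
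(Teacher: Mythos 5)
Your proposal is correct and takes essentially the same route as the paper, which simply states that the theorem follows from the four preceding lemmas; you have supplied the bookkeeping the paper leaves implicit (exhaustiveness of the three cases via the first lemma, the reverse inequality via Frobenius reciprocity applied to the embeddings cited before each lemma, and distinctness of the summands).
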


\bibliographystyle{siam}
\bibliography{Literatura}

\begin{thebibliography}{10}

\bibitem{Art}
{\sc J.~Arthur}, {\em The endoscopic classification of representations},
  vol.~61 of American Mathematical Society Colloquium Publications, American
  Mathematical Society, Providence, RI, 2013.
\newblock Orthogonal and symplectic groups.

\bibitem{Jn-temp}
{\sc C.~Jantzen}, {\em Tempered representations for classical {$p$}-adic
  groups}, Manuscripta Math., 145 (2014), pp.~319--387.

\bibitem{KL}
{\sc A.~Kret and E.~Lapid}, {\em Jacquet modules of ladder representations}, C.
  R. Math. Acad. Sci. Paris, 350 (2012), pp.~937--940.

\bibitem{LM}
{\sc E.~Lapid and A.~M{\'{\i}}nguez}, {\em On a determinantal formula of
  {T}adi\'c}, Amer. J. Math., 136 (2014), pp.~111--142.

\bibitem{Ma-SP}
{\sc I.~Mati{\'c}}, {\em Strongly positive representations of metaplectic
  groups}, J. Algebra, 334 (2011), pp.~255--274.

\bibitem{Ma-JMSP}
\leavevmode\vrule height 2pt depth -1.6pt width 23pt, {\em Jacquet modules of
  strongly positive representations of the metaplectic group
  {$\widetilde{Sp(n)}$}}, Trans. Amer. Math. Soc., 365 (2013), pp.~2755--2778.

\bibitem{Ma-JMDS1}
\leavevmode\vrule height 2pt depth -1.6pt width 23pt, {\em On {J}acquet modules
  of discrete series: the first inductive case}, preprint,  (2014).

\bibitem{Moe-Ex}
{\sc C.~M{\oe}glin}, {\em Sur la classification des s\'eries discr\`etes des
  groupes classiques {$p$}-adiques: param\`etres de {L}anglands et
  exhaustivit\'e}, J. Eur. Math. Soc. (JEMS), 4 (2002), pp.~143--200.

\bibitem{Moe-T}
{\sc C.~M{\oe}glin and M.~Tadi{\'c}}, {\em Construction of discrete series for
  classical {$p$}-adic groups}, J. Amer. Math. Soc., 15 (2002), pp.~715--786.

\bibitem{Mu-CSSP}
{\sc G.~Mui{\'c}}, {\em Composition series of generalized principal series; the
  case of strongly positive discrete series}, Israel J. Math., 140 (2004),
  pp.~157--202.

\bibitem{Si}
{\sc A.~J. Silberger}, {\em Special representations of reductive {$p$}-adic
  groups are not integrable}, Ann. of Math. (2), 111 (1980), pp.~571--587.

\bibitem{T-Str}
{\sc M.~Tadi{\'c}}, {\em Structure arising from induction and {J}acquet modules
  of representations of classical {$p$}-adic groups}, J. Algebra, 177 (1995),
  pp.~1--33.

\bibitem{T-irr}
\leavevmode\vrule height 2pt depth -1.6pt width 23pt, {\em On reducibility of
  parabolic induction}, Israel J. Math., 107 (1998), pp.~29--91.

\bibitem{T-seg}
\leavevmode\vrule height 2pt depth -1.6pt width 23pt, {\em Square integrable
  representations of classical {$p$}-adic groups corresponding to segments},
  Represent. Theory, 3 (1999), pp.~58--89 (electronic).

\bibitem{T-inv}
\leavevmode\vrule height 2pt depth -1.6pt width 23pt, {\em On invariants of
  discrete series representations of classical {$p$}-adic groups}, Manuscripta
  Math., 135 (2011), pp.~417--435.

\bibitem{T-temp}
\leavevmode\vrule height 2pt depth -1.6pt width 23pt, {\em On tempered and
  square integrable representations of classical p-adic groups}, Sci. China
  Math., 56 (2013), pp.~2273--2313.

\bibitem{Z}
{\sc A.~V. Zelevinsky}, {\em Induced representations of reductive p-adic
  groups. {II}. {O}n irreducible representations of {$ GL(n)$}}, Ann. Sci.
  \'{E}cole Norm. Sup. (4), 13 (1980), pp.~165--210.

\end{thebibliography}

\end{document}